\newtheorem{conj}{Conjecture}[section]
\newtheorem{thm}{Theorem}[section]
\newtheorem{lem}[conj]{Lemma}
\newtheorem{prop}[conj]{Proposition}
\newtheorem{ques}[conj]{Question}
\newtheorem{defn}[conj]{Definition}
\newtheorem{cor}[conj]{Corollary}
\newcommand\independent{\protect\mathpalette{\protect\independent}{\perp}} 
\def\independent#1#2{\mathrel{\rlap{$#1#2$}\mkern2mu{#1#2}}}
\def\phi{\varphi}
\def\bee{\begin{eqnarray*}}
	\def\ene{\end{eqnarray*}}
\begin{document}
		\title{Rearrangement and Pr\'ekopa-Leindler type inequalities}
		\author{James Melbourne}
		\date{May 10, 2019}
		
		\maketitle
		
		\begin{abstract}
		    We investigate the interactions of functional rearrangements with Pr\'ekopa-Leindler type inequalities.  It is shown that that certain set theoretic rearrangement inequalities can be lifted to functional analogs, thus demonstrating that several important integral inequalities tighten on functional rearrangement  about ``isoperimetric" sets with respect to a relevant measure.  Applications to the Borell-Brascamp-Lieb, Borell-Ehrhard, and the recent polar Pr\'ekopa-Leindler inequalities are demonstrated.  It is also proven that an integrated form of the Gaussian log-Sobolev inequality sharpens on rearrangement.
		\end{abstract}
\section{Introduction}

The Pr\'ekopa-Leindler inequality (PLI) stated below has become a useful tool in the study of log-concave distributions in probability and statistics, particularly in high dimension, and a point of interest and unification between probabilists and convex geometers.
\begin{thm}[Pr\'ekopa-Leindler] \label{thm: PLI}
    For $f,g :\mathbb{R}^d \to [0,\infty)$ Borel measurable and  $t \in (0,1)$, define 
        \begin{align*}
            f \square g(z) \coloneqq \sup_{(1-t)x + t y =z} f^{1-t}(x) g^t(y)
        \end{align*}
    then
        \begin{align*}
            \int_{\mathbb{R}^d} f \square g (z) dz \geq \left( \int_{\mathbb{R}^d} g(z) dz \right)^{1-t} \left( \int_{\mathbb{R}^d} h(z) dz \right)^t.
        \end{align*}
\end{thm}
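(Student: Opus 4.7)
The plan is the classical induction on dimension, reducing everything to a one-dimensional change-of-variables argument and then lifting by Fubini. The one-dimensional case is the heart of the proof; the induction step is essentially bookkeeping.

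For the base case $d=1$, assume without loss of generality that $0 < \int f, \int g < \infty$ (the degenerate cases reduce to monotone convergence or triviality). The idea is to transport Lebesgue measure on $(0,1)$ to the real line by the normalized distribution functions of $f$ and $g$: define $u,v:(0,1)\to \mathbb{R}$ implicitly by
\[ \int_{-\infty}^{u(s)} f(x)\,dx = s \int_{\mathbb{R}} f, \qquad \int_{-\infty}^{v(s)} g(y)\,dy = s \int_{\mathbb{R}} g. \]
At points of differentiability one has $u'(s) = \bigl(\int f\bigr)/f(u(s))$ and similarly for $v'$. Set $w(s) = (1-t)u(s) + t v(s)$, which is strictly increasing. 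By the definition of $\square$,
\[ f\square g(w(s)) \geq f(u(s))^{1-t} g(v(s))^t. \]
Changing variables and applying the arithmetic-geometric mean inequality $(1-t)u'(s)+ t v'(s) \geq u'(s)^{1-t} v'(s)^t$ yields
\[ \int_{\mathbb{R}} f \square g \;\geq\; \int_0^1 f(u(s))^{1-t} g(v(s))^t \bigl[(1-t)u'(s)+tv'(s)\bigr] ds \;\geq\; \left(\int f\right)^{1-t}\left(\int g\right)^{t}, \]
since each factor of $f(u(s))$ cancels with $1/u'(s)$ and likewise for $g$.

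For the induction step, assume the inequality in dimension $d-1$ and split coordinates as $z = (z_1, z')\in \mathbb{R}\times\mathbb{R}^{d-1}$. Fix any decomposition $(1-t)x_1 + t y_1 = z_1$ and consider the sections $f_{x_1}(x') := f(x_1,x')$, $g_{y_1}(y') := g(y_1,y')$. Since $(x_1,x') \mapsto ((1-t)x_1 + t y_1, (1-t)x'+t y')$ is a valid representation of $(z_1, z')$, one has the pointwise bound $f_{x_1}\square g_{y_1}(z') \leq f\square g(z_1, z')$. Integrating in $z'$ and applying the $(d-1)$-dimensional hypothesis gives
\[ \int_{\mathbb{R}^{d-1}} f\square g(z_1,z')\,dz' \;\geq\; \left(\int_{\mathbb{R}^{d-1}} f(x_1,\cdot)\right)^{1-t}\left(\int_{\mathbb{R}^{d-1}} g(y_1,\cdot)\right)^{t}. \]
Because $x_1, y_1$ were arbitrary subject to $(1-t)x_1 + t y_1 = z_1$, the left side dominates $\tilde f \square \tilde g(z_1)$ where $\tilde f, \tilde g$ are the one-dimensional marginals of $f$ and $g$. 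Fubini plus the one-dimensional PLI applied to $\tilde f, \tilde g$ closes the induction.

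The main obstacle I expect is regularity in the one-dimensional step: for merely Borel $f, g$ the distribution functions need not be strictly monotone, $u$ and $v$ need not be classically differentiable, and the sup-convolution $f \square g$ may require a measurability argument (or one can replace it by its measurable envelope, since only the integral matters). The cleanest fix is to prove the inequality first for compactly supported continuous functions bounded away from zero on their supports, where the change of variables is unambiguous, and then extend by standard approximation and monotone convergence. Once this regularization is in hand, the rest of the argument is a clean AM-GM plus Fubini reduction.
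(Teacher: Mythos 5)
The paper never proves Theorem \ref{thm: PLI}: it is quoted as the classical Pr\'ekopa--Leindler inequality (note the statement even carries a typo --- the right-hand side should be $\left(\int f\right)^{1-t}\left(\int g\right)^{t}$, not an expression in $g$ and an undefined $h$), and the paper's own arguments only build on it. So your proposal cannot be compared with a proof in the paper; what you give is the standard textbook route: the one-dimensional case by transporting $(0,1)$ onto the line via the normalized distribution functions $u,v$, AM--GM applied to $(1-t)u'+tv'$, and then induction on dimension through sections and Fubini. That outline is correct, and you correctly locate the real work in regularity and measurability ($f\square g$ is in general only universally measurable --- the paper makes the same point via analyticity of Minkowski sums of Borel sets --- and $u,v$ are only a.e.\ differentiable). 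Two cautions on your proposed repair. First, the regularization class ``compactly supported continuous functions bounded away from zero on their supports'' is essentially empty (a continuous function cannot jump to zero at the boundary of its support); the usual fixes are either to run your transport argument using only that $u,v$ are increasing, hence a.e.\ differentiable, together with the one-sided change-of-variables inequality $\int_{\mathbb{R}} h \geq \int_0^1 h(w(s))\,w'(s)\,ds$ valid without absolute continuity, or to prove the one-dimensional case for bounded functions normalized so that $\sup f=\sup g=1$ from the level-set inclusion $\{f\square g>\lambda\}\supseteq (1-t)\{f>\lambda\}+t\{g>\lambda\}$, one-dimensional Brunn--Minkowski and the layer-cake formula --- which, incidentally, is exactly the superlevel-set mechanism the paper itself exploits for its rearrangement theorems. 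Second, in the induction step the marginals $\tilde f,\tilde g$ may be infinite on a null set and their measurability should be noted (Tonelli gives it); truncation plus monotone convergence disposes of this. With those repairs your proposal is a complete and standard proof of the stated theorem.
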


The inequality can be motivated from a convex geometric perspective as a functional generalization of the dimension free statement of the Brunn-Minkowski inequality (BMI), which we recall as the fact that for $A,B$ compact in $\mathbb{R}^d$ and  $|\cdot|_d$ the $d$-dimensional Lebesgue volume,
\[
    |(1-t) A + t B |_d \geq |A|^{1-t}_d |B|_d^{t}.
\]
Indeed by taking $f = \mathbbm{1}_A$, and $g = \mathbbm{1}_B$, we have $f\square g = \mathbbm{1}_{(1-t)A+tB}$. PLI implies that integration preserves the inequality and the result follows.  

The BMI has an elegant qualitative formulation;
the volume of sum-sets decreases on spherical symmetrization.  More explicitly, if $A$ and $B$ are compact sets, with $A^*$ and $B^*$ Euclidean balls satisfying $|A^*|_d = |A|_d$, $|B^*|_d =|B|_d$, then
\begin{align} \label{eq: qualitative BMI}
    |A + B|_d \geq |A^* + B^*|_d.
\end{align}

Our first main result (Theorem \ref{thm: PLI rearrangement}) contains a functional generalization of \eqref{eq: qualitative BMI}.  We will show PLI  ``sharpens'' on rearrangement in the sense that 
\begin{align} \label{eq: spherical rearrangement example}
    \int f \square g \geq \int f^* \square g^*
\end{align}
where $*$ denotes a functional rearrangement to be defined below.  In fact we will prove that for $\psi$ increasing, 
\begin{align} \label{eq: spherical rearrangement example increasing function}
    \int \psi(f \square g) \geq \int \psi(f^* \square g^*).
\end{align}
Our methods are reasonably general and Theorem \ref{thm: Main Theorem} will give a class of set theoretic inequalities that admit functional generalization in the sense of \eqref{eq: spherical rearrangement example increasing function}.  As a consequence we will show that analogs of \eqref{eq: spherical rearrangement example increasing function} can be given to sharpen not only the PLI, but the Borell-Brascamp-Lieb inequalities \cite{Bor75a,BL76a}, the Borell-Ehrhard inequality in the Gaussian setting \cite{Bor08, Ehr83}, and a recent Polar Pr\'ekopa-Leindler \cite{artstein2017polar}.

These results can also be motivated from an information theoretic perspective, where the BMI can be considered a R\'enyi entropy power inequality. There has been considerable recent work (see \cite{BM16, BC14, BC15:1, Li17, LMM2019:isit:1,LMM2019:isit:2, RS16}) developing R\'enyi entropy \cite{Ren61} generalizations of the classical entropy power inequality (EPI) of Shannon-Stam \cite{Sha48, Sta69}.  One should compare the sharpening of PLI here to \cite{WM14}, where Madiman and Wang show that while spherically symmetric decreasing rearrangements of random variables preserve their R\'enyi entropy, they decrease the R\'enyi entropy of independent sums of random variables. 
One application of the rearrangement result in information theory is the reduction of R\'enyi generalizations of the EPI to the spherically symmetric case, see for example \cite{marsiglietti2018renyi} where the Madiman-Wang result is used to sharpen the R\'enyi EPI put forth in \cite{marsiglietti2018entropy}.  See \cite{MMX17:2} to find an extension and application of \cite{WM14} for the $\infty$-R\'enyi entropy.
It should be mentioned that the connections between BMI and entropy power inequalities are not new.   The analogy between the two inequalities was first observed in \cite{CC84}, and a unified proof was given in \cite{DCT91} drawing on the work of \cite{Bec75, BL76b,  Lie78}.  The reader is directed to \cite{MMX17:1} where a further development of R\'enyi entropy power inequalities and their connections to convex geometry are given.

In the Gaussian case, the strict convexity of the potential gives a result stronger than PLI, and we are able to adapt the rearrangement ideas to approach the Gaussian log-Sobolev inequality.  We show in Theorem \ref{thm: Integrated log-Sobolev} that for the Gaussian measure, the ``integrated'' log-Sobolev inequality derived from PLI by Bobkov and Ledoux \cite{BL09} sharpens on half-space rearrangement.


An alternative motivation for this investigation is the Brascamp-Lieb-Barthe  inequalities relationship to the Brascamp-Lieb-Luttinger rearrangement inequalities \cite{BLL74}. The Brascamp-Lieb inequality \cite{BL76a} enjoys the Brascamp-Lieb-Luttinger inequality as a rearrangement analog.  In \cite{Bar98b} Barthe used an optimal transport argument to prove Brascamp-Lieb and simultaneously demonstrated a dual inequality that includes PLI as a special case.  It is natural to ask for a rearrangement inequality analog of Barthe's result, to provide a dual to the Brascamp-Lieb-Luttinger rearrangement inequality. This work represents a confirmation of such an inequality in the special case corresponding to PLI.  

The paper is organized in the following manner; in Section \ref{sec:Preliminaries} we will give defintions and background on a notion of rearrangement. In Section \ref{sec: PLI rearrange} we give a rearrangement inequality for PLI,  before giving a general version in Section \ref{sec: General rearrange}.  In Section \ref{sec: Applications} we give applications of the theorem derived in Section \ref{sec: General rearrange} to special cases. 
In Section \ref{sec: Gaussian log-Sobolev} we give a sharpening of an integrated Gaussian log-Sobolev inequality, via half-space rearrangement.  Finally, in Section \ref{sec: Barthe Rearrangement Connection} we discuss connections with the work of Barthe and Brascamp-Lieb-Luttinger closing with an open problem.

\section{Preliminaries} \label{sec:Preliminaries}
For a set $A$, will use the notation $\mathbbm{1}_A$ to denote the indicator function of $A$, taking the value $1$ on $A$, and $0$ elsewhere. For $x \in \mathbb{R}^d$, $|x|$ will denote the usual Euclidean norm.  We use $\mathbb{Q}_+$ to denote the non-negative rational numbers.
We use $\gamma_d$ to denote both the standard Gaussian measure on $\mathbb{R}^d$ and its density function
\[
    \gamma_d(x) = \frac{e^{-|x|^2/2}}{(2 \pi)^{\frac d 2}}.
\]
When $d=1$, and there is no risk of confusion, we will omit the subscript and write $\gamma$.
We denote the Gaussian distribution function
\[
    \Phi(x) = \int_{-\infty}^x \gamma(y) dy
\]
and its inverse $\Phi^{-1}$.

\subsection{Spherically symmetric decreasing rearrangements}
Given a nonempty measurable set $A \subseteq \mathbb{R}^d$ we define its spherically symmetric rearrangement $A^*$ to be the origin centered ball of equal volume,
\begin{align*}
	A^* \coloneqq \left\{x: |x| < \left(|A|_d /\omega_d \right)^{\frac 1 d} \right\},
\end{align*}
where $\omega_d$ is the volume of the $d$-dimensional unit ball, with the understanding that $A^* = \varnothing$ in the case that $|A|_d =0$ and $A^* = \mathbb{R}^d$ when $|A|_d = \infty$.

We can extend this notion of symmetrization to functions via the layer-cake decomposition of a non-negative function $f$, 
\begin{align*}
	f(x) =  \int_0^{f(x)} 1 dt = \int_0^\infty \mathbbm{1}_{ \{y : f(y) > t \}} (x) dt.
\end{align*}

\begin{defn}
	For a measurable non-negative function $f$ define its decreasing symmetric rearrangement $f^*$ by
	\begin{equation} \label{eq: non increasing symmetric rearrangement defn}
		f^*(x) \coloneqq \int_0^\infty \mathbbm{1}_{ \{ y: f(y) > t \}^*}(x) dt.
	\end{equation}
\end{defn}

Note that {\it decreasing} is used here in the non-strict sense, synonomous with non-increasing.

\begin{prop}
$f^*$ is characterized by the equality 
\begin{equation} \label{eq: symmetric rearrangement characterization}
	\{f^* > \lambda \} = \{ f> \lambda \}^*.
\end{equation}
\end{prop}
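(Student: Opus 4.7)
The plan is to reduce the proposition to a simple monotonicity-plus-right-continuity fact about the usual distribution function $\mu_f(t) \coloneqq |\{y : f(y) > t\}|_d$. This function is non-increasing in $t$ by inclusion of super-level sets, and right-continuous since $\{f > \lambda\} = \bigcup_{\epsilon > 0} \{f > \lambda + \epsilon\}$ as a nested union, so by continuity of Lebesgue measure $\mu_f(\lambda) = \lim_{\epsilon \downarrow 0} \mu_f(\lambda + \epsilon)$.

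First I would unpack the integrand. By the definition of spherical rearrangement, $\{f > t\}^* = \{x \in \mathbb{R}^d : \omega_d |x|^d < \mu_f(t)\}$ (with the understanding that this set is all of $\mathbb{R}^d$ when $\mu_f(t) = \infty$). Consequently $\mathbbm{1}_{\{f > t\}^*}(x) = 1$ if and only if $\mu_f(t) > \omega_d |x|^d$. Because $t \mapsto \mu_f(t)$ is non-increasing, the set $\{t \geq 0 : \mu_f(t) > \omega_d |x|^d\}$ is an interval containing $0$, and its one-dimensional Lebesgue measure equals its supremum. Therefore the layer-cake definition collapses to
\begin{equation*}
    f^*(x) = \sup\{t \geq 0 : \mu_f(t) > \omega_d |x|^d\},
\end{equation*}
with the convention $\sup \varnothing = 0$.

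Next I would verify the two inclusions in \eqref{eq: symmetric rearrangement characterization}. For $(\supseteq)$, suppose $x \in \{f > \lambda\}^*$, so $\mu_f(\lambda) > \omega_d |x|^d$. Right-continuity of $\mu_f$ at $\lambda$ then produces some $\epsilon > 0$ with $\mu_f(\lambda + \epsilon) > \omega_d |x|^d$, hence $f^*(x) \geq \lambda + \epsilon > \lambda$. For $(\subseteq)$, suppose $f^*(x) > \lambda$; then there exists $t > \lambda$ with $\mu_f(t) > \omega_d |x|^d$, and monotonicity gives $\mu_f(\lambda) \geq \mu_f(t) > \omega_d |x|^d$, i.e. $x \in \{f > \lambda\}^*$.

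The only subtle ingredient is right-continuity of $\mu_f$; the rest is bookkeeping. Boundary cases ($\mu_f(t) = \infty$, or $\{f > \lambda\}$ of zero measure, or $|x| = 0$) are absorbed cleanly into the conventions $\{f > t\}^* = \mathbb{R}^d$ when $\mu_f(t) = \infty$ and $\{f > \lambda\}^* = \varnothing$ when $\mu_f(\lambda) = 0$, so no direction of the argument requires separate treatment.
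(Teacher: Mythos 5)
Your proof of the equality \eqref{eq: symmetric rearrangement characterization} is correct, but it takes a genuinely different route from the paper's. The paper does not argue the spherical case directly at all: it defers to the general Proposition \ref{prop: characterization of rearrangement}, where the inclusion $\{f^*>\lambda\}\subseteq\{f>\lambda\}^*$ comes from the nestedness of the sets $\{f>t\}^*$ in $t$, and the reverse inclusion is proved first for simple functions via Lemma \ref{lem: simple function rearrangement} and then for general $f$ by monotone approximation, using the continuity axiom (item \eqref{item: continuity of rearrangement} of Definition \ref{def: rearrangement}) to commute $*$ with the increasing union of level sets. You instead exploit the explicit ball structure of the spherical rearrangement: $\{f>t\}^*=\{x:\omega_d|x|^d<\mu_f(t)\}$, so that $f^*(x)=\sup\{t\geq 0:\mu_f(t)>\omega_d|x|^d\}$, and everything reduces to monotonicity plus right-continuity of the distribution function $\mu_f$, the latter being continuity from below of Lebesgue measure (which, as you note, also covers $\mu_f(\lambda)=\infty$ and the degenerate conventions). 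This is shorter and more elementary for the case at hand; your right-continuity step plays exactly the role that axiom \eqref{item: continuity of rearrangement} plays in the abstract proof, so the skeleton of your argument would generalize, though the reduction to the scalar quantity $\omega_d|x|^d$ is specific to balls. What the paper's detour buys is a single argument valid for every rearrangement in the sense of Definition \ref{def: rearrangement} (the convex-set and Gaussian half-space rearrangements used later), at the cost of the simple-function lemma. One small omission: the proposition says $f^*$ is \emph{characterized} by \eqref{eq: symmetric rearrangement characterization}, and the paper also verifies the converse, that any $g$ with $\{g>\lambda\}=\{f>\lambda\}^*$ for all $\lambda$ equals $f^*$; this is one line from the layer-cake identity $g(x)=\int_0^\infty \mathbbm{1}_{\{g>\lambda\}}(x)\,d\lambda$, which your computation already contains, but you should state it to cover the uniqueness half of the claim.
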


The proof will be given in greater generality in the following section.

\begin{cor}
	$f^*$ is lower semi-continuous, spherically symmetric and non-increasing in the sense that $|x| \leq |y|$ implies $f^*(x) \geq f^*(y)$.
\end{cor}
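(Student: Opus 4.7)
The plan is to derive all three properties directly from the characterization \eqref{eq: symmetric rearrangement characterization}, namely $\{f^* > \lambda\} = \{f > \lambda\}^*$, since by definition each set $\{f > \lambda\}^*$ is either empty, an open origin-centered Euclidean ball, or all of $\mathbb{R}^d$.

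\textbf{Lower semi-continuity.} A function is lower semi-continuous if and only if its super-level sets are open. For every $\lambda \in \mathbb{R}$, the set $\{f^* > \lambda\} = \{f > \lambda\}^*$ is open (it is either empty, an open ball, or all of $\mathbb{R}^d$), so $f^*$ is lsc.

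\textbf{Spherical symmetry.} Let $O$ be any orthogonal transformation of $\mathbb{R}^d$. Each set $\{f > \lambda\}^*$ is invariant under $O$, hence $\{f^* \circ O > \lambda\} = O^{-1}\{f^* > \lambda\} = \{f^* > \lambda\}$ for every $\lambda$; this forces $f^* \circ O = f^*$ pointwise.

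\textbf{Radial monotonicity.} Suppose $|x| \leq |y|$. For every $\lambda < f^*(y)$, the point $y$ lies in the open origin-centered ball $\{f^* > \lambda\} = \{f > \lambda\}^*$; since $|x| \leq |y|$, the point $x$ lies in the same ball, so $f^*(x) > \lambda$. Letting $\lambda \uparrow f^*(y)$ gives $f^*(x) \geq f^*(y)$.

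No serious obstacle arises here; the only minor subtlety is handling the degenerate cases where $\{f > \lambda\}^*$ is empty or all of $\mathbb{R}^d$, which are covered by the conventions set out immediately after the definition of $A^*$.
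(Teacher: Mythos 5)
Your proposal is correct and follows essentially the same route as the paper: all three properties are read off from the characterization $\{f^* > \lambda\} = \{f > \lambda\}^*$ together with the fact that these sets are origin-centered open balls (or $\varnothing$, $\mathbb{R}^d$), with the radial-monotonicity step handled by letting $\lambda \uparrow f^*(y)$ exactly as in the paper. The only cosmetic difference is that you obtain spherical symmetry from orthogonal invariance of the level sets, whereas the paper deduces it by running the monotonicity argument in both directions when $|x| = |y|$; both are immediate consequences of the same characterization.
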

\begin{proof}
	$f^*$ has open super level sets by equation \eqref{eq: symmetric rearrangement characterization}, and is thus lower semi-continuous.  To prove non-increasingness observe that using the characterization above $f^*(y) > \lambda$ iff $y \in \{ f > \lambda \}^*$ which implies by $|x| \leq |y|$ that $x \in \{f > \lambda\}^*$, and thus $f^*(x) > \lambda$.  Applying this to $\lambda_n$ increasing to $f^*(y)$ yields our result.  Observe that this implies spherical symmetry, by applying preceding argument in the opposite direction $f(x) = f(y)$ when $|x| = |y|$. 
\end{proof}

\subsection{More general rearrangements}

\begin{defn} \label{def: rearrangement} \normalfont
For Polish measure spaces $(M, \mu)$ and $(N, \alpha)$, with Borel $\sigma$-algebra, we will call a set map from the Borel $\sigma$-algebra of $M$ to the Borel $\sigma$-algebra of $N$
a {\it rearrangement} when it satisfies the following,
\begin{enumerate}
    \item \label{item: rearrangements are open}
    $*(A)$ is an open set satisfying $\alpha(*(A)) = \mu(A)$
    \item \label{item: non-increasingness}
    $\mu(A) \leq \mu(B)$ implies $*(A) \subseteq *(B)$
    \item \label{item: continuity of rearrangement}
    For a sequence $A_i \subseteq A_{i+1}$, $*( \cup_{i=1}^\infty A_i) = \cup_{i=1}^\infty *(A_i).$
\end{enumerate}
\end{defn}
Notice that in \ref{item: continuity of rearrangement}, $\cup_j *(A_j) \subseteq *( \cup_j A_j)$ holds from \ref{item: non-increasingness}, so the assumption is only $\cup_j *(A_j) \supseteq *( \cup_j A_j)$.
For brevity of notation we write $A^* = *(A)$, and note the following extension to functions.  
\begin{defn}
    For a rearrangement $*$ and Borel measurable $f:M \to [0,\infty)$ 
    define $f^*: N \to [0,\infty)$,
    \begin{align*}
        f^*(x) \coloneqq \int_0^\infty \mathbbm{1}_{\{ f > t \}^*}(x) dt.
    \end{align*}
\end{defn}

Rearrangement is in general non-linear, however we do have linear behavior in the following special case.
\begin{lem} \label{lem: simple function rearrangement}
For a simple function $s$, expressed as 
$
    s = \sum_{i=1}^n a_i \mathbbm{1}_{A_i}
$
with $a_i >0$ and $A_i \subsetneq A_{i-1}$,
\begin{align*}
    s^* = \sum_{i=1}^n a_i \mathbbm{1}_{A_i^*}.
\end{align*}
\end{lem}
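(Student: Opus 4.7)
The plan is to identify the super-level sets of $s$ explicitly in terms of the $A_i$, and then unwind the defining integral of $s^*$.

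First, since the $A_i$ are strictly nested, $A_1 \supsetneq A_2 \supsetneq \cdots \supsetneq A_n$, for any $x$ one has $s(x) = \sum_{i : x \in A_i} a_i$, and the indices where $x$ lies form an initial segment $\{1,\dots,k\}$, where $k$ is the largest integer with $x\in A_k$ (and $k=0$ if $x \notin A_1$). Setting $b_0 = 0$ and $b_k = \sum_{i=1}^k a_i$ for $k \geq 1$, the positivity of the $a_i$ makes the sequence $b_0 < b_1 < \cdots < b_n$ strictly increasing, and $s$ takes values in $\{b_0,b_1,\dots,b_n\}$.

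I would next verify the key identity
\[
\{ s > t \} = A_k \qquad \text{for every } t \in [b_{k-1},b_k),\ k=1,\ldots,n,
\]
and $\{s > t\} = \emptyset$ for $t \geq b_n$. This is just a matter of noting that $s(x) > t \geq b_{k-1}$ forces $s(x) \geq b_k$, since $s$ takes only the values $b_j$, which in turn is equivalent to $x \in A_k$. This is really the only content of the lemma; once this partition of $[0,\infty)$ is in hand, the rest is a routine integral computation.

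Plugging into the definition of the rearrangement and cutting the integral along the level ranges $[b_{k-1}, b_k)$ yields
\begin{align*}
s^*(x) = \int_0^\infty \mathbbm{1}_{\{s>t\}^*}(x)\,dt = \sum_{k=1}^n \int_{b_{k-1}}^{b_k} \mathbbm{1}_{A_k^*}(x)\,dt = \sum_{k=1}^n (b_k - b_{k-1})\mathbbm{1}_{A_k^*}(x) = \sum_{k=1}^n a_k \mathbbm{1}_{A_k^*}(x),
\end{align*}
which is exactly the claim. There is no real obstacle here; the only place one must be attentive is the strict inequality in the super-level set $\{s > t\}$ at the boundary points $t = b_{k-1}$, which is what forces the level set at the threshold to jump up to $A_k$ rather than stay at $A_{k-1}$, and is precisely what makes the linearity of $*$ on simple functions with nested supports work out.
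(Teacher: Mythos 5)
Your proof is correct and rests on the same key observation as the paper's: by the strict nesting, the super-level set $\{s>t\}$ equals $A_k$ for $t$ in the range of partial sums $[b_{k-1},b_k)$, and one then applies $*$ level by level. The only (cosmetic) difference is that you evaluate the defining layer-cake integral $\int_0^\infty \mathbbm{1}_{\{s>t\}^*}\,dt$ directly over this partition, whereas the paper argues via the formula $s^*(z)=\sup\{t: z\in\{s>t\}^*\}$ and proves the two inequalities separately; both treat the trivial tail $t\ge b_n$ (where $\{s>t\}=\varnothing$) at the same implicit level of rigor.
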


\begin{proof}
Let us give more explicit formulas for both quantities.
    $$\sum_{i=1}^n a_i \mathbbm{1}_{A_i^*}(z) = \sum_{i=1}^{m_z} a_i$$
    where $m_z = \max \{ i : z \in A_i^*\}$, and the formula
  $$
        s^*(z) = \sup \{ t : z \in \{ s > t \}^* \},
    $$ which holds not just for simple functions but general $f$.
    If $z \in A^*_{m_z}$ with $m_z$ maximal, then for $t < \sum_{i=1}^{m_z} a_i$, $A_{m_z} \subseteq \{ s > t\}$, which in turn gives $A_{m_z}^* \subseteq \{s > t \}^*$.  Thus $z \in \{ s >t \}^*$ for all $t < \sum_{i=1}^{m_z} a_i$ and we have
    \begin{align*}
        s^*(z)
            =
            \sup_t \{ z \in \{s > t \}^* \}
            \geq
                \sum_{i=1}^{m_z} a_i
                =
                    \sum_{i=1}^n a_i \mathbbm{1}_{A_i^*}(z).
    \end{align*}
    For the reverse inequality, assume $s^*(z)>0$ (else there is nothing to prove) and take $t$ such that $z \in \{ s > t \}^*$.  Since $\{ s >t \} = A_{k_t}$ where $k_t = \min \{j : \sum_{i=1}^j a_i > t \}$, we have $\{ s >t \}^* = A_{k_t}^*$.  This implies that $\sum_{i=1} a_i \mathbbm{1}_{A_i^*}(z) \geq \sum_{i=1}^{k_t} a_i > t$.  Taking the supremum in $t$,
    \[
        \sum_{i=1}^n a_i \mathbbm{1}_{A_i^*}(z) \geq s^*(z).
    \]
\end{proof}

\begin{prop} \label{prop: characterization of rearrangement}
$f^*$ is characterized by the equality
    \begin{align} \label{eq: characterizing equation for f^*}
    \{f^* > \lambda \} = \{f > \lambda \}^*.
    \end{align}  In particular $f^*$ is lower semi-continuous, 
    and equi-measureable with $f$ in that $\mu \{ f > \lambda \} = \alpha \{ f^* > \lambda \}$.
\end{prop}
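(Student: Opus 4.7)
The plan is to derive the set identity $\{f^*>\lambda\}=\{f>\lambda\}^*$ first, and then read off both lower semi-continuity and equi-measurability from it together with the three defining properties of a rearrangement. The characterization (uniqueness) part then follows from a one-line layer-cake argument.

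The key preliminary observation is that, because $t\mapsto \{f>t\}$ is nested (decreasing in $t$) and rearrangement preserves inclusion by property \ref{item: non-increasingness}, the family $t\mapsto\{f>t\}^*$ is also nested, so the defining integral telescopes to
\begin{align*}
    f^*(x)=\int_0^\infty \mathbbm{1}_{\{f>t\}^*}(x)\,dt = \sup\{s\geq 0:\, x\in\{f>s\}^*\}.
\end{align*}
For the inclusion $\{f>\lambda\}^*\subseteq\{f^*>\lambda\}$, take $x\in\{f>\lambda\}^*$ and choose a strictly decreasing sequence $\lambda_n\downarrow \lambda$. Then $\{f>\lambda_n\}\uparrow\{f>\lambda\}$ in $n$, and property \ref{item: continuity of rearrangement} gives $\cup_n\{f>\lambda_n\}^*=\{f>\lambda\}^*$, so $x\in\{f>\lambda_n\}^*$ for some $n$, whence $f^*(x)\geq \lambda_n>\lambda$. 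Conversely, if $f^*(x)>\lambda$ the supremum formula produces some $s_0>\lambda$ with $x\in\{f>s_0\}^*$; since $\{f>s_0\}\subseteq\{f>\lambda\}$, property \ref{item: non-increasingness} gives $x\in\{f>\lambda\}^*$. This establishes \eqref{eq: characterizing equation for f^*}.

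With the identity in hand, lower semi-continuity is immediate: every super-level set $\{f^*>\lambda\}$ equals $\{f>\lambda\}^*$, which is open by property \ref{item: rearrangements are open}. Equi-measurability is equally immediate, $\alpha\{f^*>\lambda\}=\alpha(\{f>\lambda\}^*)=\mu\{f>\lambda\}$, again by property \ref{item: rearrangements are open}. Finally, for the \emph{characterization}: if $g:N\to[0,\infty)$ satisfies $\{g>\lambda\}=\{f>\lambda\}^*$ for every $\lambda\geq 0$, then applying the layer-cake representation pointwise yields
\begin{align*}
    g(x)=\int_0^\infty \mathbbm{1}_{\{g>t\}}(x)\,dt=\int_0^\infty \mathbbm{1}_{\{f>t\}^*}(x)\,dt=f^*(x).
\end{align*}

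The only subtle step is the inclusion $\{f>\lambda\}^*\subseteq\{f^*>\lambda\}$, and it is precisely there that property \ref{item: continuity of rearrangement}, the continuity of $*$ along increasing unions, becomes essential; the other two properties are too weak on their own because $\{f>\lambda\}$ is the intersection, not the union, of $\{f>\lambda_n\}$ for $\lambda_n\downarrow\lambda$, so one cannot argue directly from monotonicity. I therefore expect the main (mild) obstacle to be verifying that one has framed the approximation in the correct direction, replacing the intersection $\cap_n\{f\geq\lambda_n\}$ with the union $\cup_n\{f>\lambda_n\}$, which is what allows property \ref{item: continuity of rearrangement} to be applied.
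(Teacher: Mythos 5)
Your proof is correct, but it takes a genuinely different route from the paper's at the one nontrivial step, the inclusion $\{f>\lambda\}^*\subseteq\{f^*>\lambda\}$. You prove it directly: using the nestedness of $t\mapsto\{f>t\}^*$ (property \ref{item: non-increasingness}) to telescope the defining integral into $f^*(x)=\sup\{s: x\in\{f>s\}^*\}$, and then applying property \ref{item: continuity of rearrangement} to the increasing union $\{f>\lambda_n\}\uparrow\{f>\lambda\}$ with $\lambda_n\downarrow\lambda$, so that $x\in\{f>\lambda\}^*$ forces $x\in\{f>\lambda_n\}^*$ for some $n$ and hence $f^*(x)\geq\lambda_n>\lambda$. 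The paper instead approximates the \emph{function}: it first establishes the identity for simple functions via Lemma \ref{lem: simple function rearrangement}, then takes an increasing sequence of simple functions $s_n\uparrow f$, applies property \ref{item: continuity of rearrangement} to $\{s_n>\lambda\}\uparrow\{f>\lambda\}$, and invokes the monotonicity $f_1\leq f_2\Rightarrow f_1^*\leq f_2^*$ to conclude. Your argument is shorter and more self-contained -- it needs neither the simple-function lemma nor that monotonicity of the functional rearrangement -- while the paper's route produces, along the way, the explicit linear formula for rearranged simple functions, which has independent interest. The remaining pieces (the easy inclusion via the sup formula and property \ref{item: non-increasingness}, lower semi-continuity and equi-measurability from property \ref{item: rearrangements are open}, and uniqueness via the layer-cake identity) agree with the paper's proof, and your closing remark correctly identifies that approximating $\lambda$ from above (an increasing union of level sets) rather than from below is exactly what makes property \ref{item: continuity of rearrangement} applicable.
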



\begin{proof}
    First we prove the equality \eqref{eq: characterizing equation for f^*}.  Since $f^*(x) > \lambda$ implies $\int_0^\infty \mathbbm{1}_{\{f > t \}^*}(x) dt > \lambda$, which in turn, by the monotonicity of $\mathbbm{1}_{\{ f > t \}^*}$ implies the existence of $t > \lambda$ such that $x \in \{ f> t\}^*$.  From this it follows that
    \begin{align*}
        \{ f^* > \lambda \} \subseteq \{ f > \lambda \}^*.
    \end{align*}
    For the converse, first assume that $f = s$ is a simple function, expressed as
    \[
        s = \sum_{i=1}^n a_i \mathbbm{1}_{A_i}
    \]
    with $a_i >0$ and $A_{i} \subsetneq A_{i-1}$.  By Lemma \ref{lem: simple function rearrangement}
    \[
    s^* = \sum_{i=1}^n a_i \mathbbm{1}_{A_i^*}.
    \]
    Since $\{ s > \lambda \} = A_k$ where $k = \min \{j : \sum_{i=1}^j a_i > \lambda \}$, $z \in \{ s > \lambda \}^* = A_k^*$ implies $s^*(z) = \sum_{i=1}^n a_i \mathbbm{1}_{A_i^*}(z) \geq \sum_{i=1}^k a_i > \lambda$.  Thus $\{ s > \lambda \}^* \subseteq \{ s^* > \lambda \}$ holds for simple functions.
    Now take $s_n$ to be a sequence of increasing simple functions approximating $f$ pointwise, and uniformly on sets where $f$ is bounded.  Then
    \begin{align*}
        \{ f > \lambda \}^* 
            =
                \left( \bigcup_{n=1}^\infty \{ s_n > \lambda \} \right)^*
            =
                \bigcup_{n=1}^\infty \{s_n > \lambda \}^*
            =
                \bigcup_{n=1}^\infty \{ s_n^* > \lambda \}.
    \end{align*}
    where the first equality is from the assumption of increasingness of the simple functions, the second is from the Definition \ref{def: rearrangement} item \eqref{item: continuity of rearrangement}, and the third follows from the characterization just proven for simple functions.
Since $f_1 \leq f_2$, implies $f_1^* \leq f_2^*$ it follows that $\cup \{ s_n^* > \lambda \} \subseteq \{f^* > \lambda \}$, so that $\{ f > \lambda \}^* \subseteq \{f^* > \lambda\}$.

If $g$ is another function satisfying $\{ g > \lambda\} = \{ f > \lambda \}^*$ for all $\lambda$, then
\begin{align*}
    g(z) 
        =
            \int_0^\infty \mathbbm{1}_{\{ g > \lambda \}} d \lambda 
        =
            \int_0^\infty \mathbbm{1}_{\{ f > \lambda \}^*} d \lambda 
        =
            \int_0^\infty \mathbbm{1}_{\{ f^* > \lambda \}} d \lambda 
        =
            f^*(z).
\end{align*}

The fact that $f$ is lower semi-continuous follows from item \eqref{item: rearrangements are open} of our definition, that $A^*$ is open.  Equimeasurability is given by $\alpha \{ f^* > \lambda\} = \alpha \{ f > \lambda \}^* = \mu \{ f > \lambda \}$.
\end{proof}

\begin{prop} \label{prop: convex set rearrangement holds}
For an open convex set $K \subseteq \mathbb{R}^d$ with closure containing the origin. The set map $*_K$ defined by $$A^{*_K} \coloneqq \left(\frac{|A|_d}{|K|_d} \right)^{\frac 1 d} K,$$ is a rearrangement with $(M,\mu) = (N, \alpha) = (\mathbb{R}^d, |\cdot|_d)$.
\end{prop}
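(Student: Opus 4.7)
The plan is to verify the three conditions of Definition \ref{def: rearrangement} in turn, taking $(M,\mu) = (N,\alpha) = (\mathbb{R}^d,|\cdot|_d)$. Everything reduces to the behavior of the one-parameter family of dilates $\{tK\}_{t>0}$ under the scaling $t(A) \coloneqq (|A|_d/|K|_d)^{1/d}$, so the real content is a nestedness/continuity statement about this family. The main obstacle will be the monotone-continuity property \eqref{item: continuity of rearrangement}, which forces one to exploit the openness of $K$; the hypothesis $0 \in \overline{K}$ enters only through the preliminary nesting lemma already sketched in the footnote of the previous subsection.

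For property \eqref{item: rearrangements are open}, I would note that $A^{*_K} = t(A) K$ is a positive dilate of an open set and is thus open, and the Lebesgue scaling $|tK|_d = t^d |K|_d$ immediately gives $|A^{*_K}|_d = |A|_d$.

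For property \eqref{item: non-increasingness}, since $t(\cdot)$ is monotone in Lebesgue measure, it suffices to show that $0 < t \leq s$ implies $tK \subseteq sK$. I would argue as in the paper's footnote: given $x = tk$ with $k \in K$, use openness of $K$ to pick a ball $B(k,r) \subseteq K$, choose a sequence $k_n \in K$ with $k_n \to 0$, and note that for $n$ large, $k - \tfrac{1-t/s}{t/s}\,k_n$ still lies in $B(k,r) \subseteq K$. The $(t/s)$-$(1 - t/s)$ convex combination of this point with $k_n$ collapses to $(t/s) k$, which therefore lies in $K$, i.e., $x \in sK$.

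Property \eqref{item: continuity of rearrangement} is the step requiring the most care. Writing $t_i = t(A_i)$ and $t_\infty = t(\cup_i A_i)$, continuity of Lebesgue measure from below gives $t_i \nearrow t_\infty$. Property \eqref{item: non-increasingness} already yields $\cup_i t_i K \subseteq t_\infty K$, so only the reverse containment requires work. Given $x = t_\infty k \in t_\infty K$ with $k \in K$, I would observe that $(t_\infty/t_i)\, k \to k$ as $i \to \infty$; since $k$ lies in the open set $K$, eventually $(t_\infty/t_i)\, k \in K$, which translates to $x \in t_i K$. This is exactly the place where openness of $K$ is indispensable. Degenerate cases ($t_\infty = 0$ forces all $t_i = 0$ and both sides are $\varnothing$ under the natural convention; $t_\infty = \infty$ needs a separate but analogous dilate argument) are easily absorbed.
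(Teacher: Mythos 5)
Your proposal is correct and takes essentially the same route as the paper: openness plus Lebesgue scaling for property \eqref{item: rearrangements are open}, and the same nesting argument (with $k_n \in K$, $k_n \to 0$, exploiting $0 \in \overline{K}$, openness, and convexity) for property \eqref{item: non-increasingness}. For property \eqref{item: continuity of rearrangement} the paper gives only a one-line justification, so your explicit argument that $t_i \nearrow t_\infty$ and openness of $K$ force $\bigcup_i t_i K = t_\infty K$ is simply a more carefully spelled-out version of the intended step, not a different method.
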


\begin{proof}
It is immediate that $A^{*_K}$ is open and the homogeneity of the Lebesgue measure ensures that $|A^{*_K}|_d = |A|_d$, hence \eqref{item: rearrangements are open} follows.  To prove \eqref{item: non-increasingness}, note that for $0 < |A| \leq |B|$, by the definition of $*_K$,  $A^{*_K} = tK$ and $B^{*_K} = sK$ for some $0 < t \leq s$.
Suppose that $x = t k$ for $k \in K$ and $k_n$ a sequence in $K$ converging to $0$.  Then $$x = s\left( \frac t s \left(k - \left(\frac s t - 1 \right) k_n \right) + \left(1 - \frac t s \right) k_n\right).$$ 
By $K$ open, $k- (\frac s t - 1) k_n$ belongs to $K$ for large $n$, and when this holds, by convexity $( \frac t s (k - (\frac s t - 1 )k_n) + (1 - \frac t s) k_n) \in K$.  It follows that $x \in s K$ and hence $A^{*_K} \subseteq B^{*_K}$.  The continuity condition in \eqref{item: continuity of rearrangement} holds, since both sets are origin symmetric balls of the same volume.
\end{proof}

Observe that the qualitative statement of Brunn-Minkowski \eqref{eq: qualitative BMI}, for Borel $A,B$
\begin{align} \label{eq: K convex set BMI}
    |A+B|_d \geq |A^{*_K} + B^{*_K}|_d,
\end{align}
is preserved.  In the following section we will extend this qualitative result to the functional setting.


\begin{prop} \label{prop: half space rearrangement holds}
    For a fixed coordinate $i$, the set function $*$ defined on a Polish space $M$ with probability measure $\mu$ and $(N,\alpha) = (\mathbb{R}^d, \gamma_d)$ by
    \[
        A^* = \{ x : x_i < \Phi^{-1}(\mu(A)) \}
    \]
    is a rearrangement.
\end{prop}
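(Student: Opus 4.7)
The plan is to verify the three axioms of Definition \ref{def: rearrangement} in order, which should be essentially routine once one notes that $\Phi$ (resp.\ $\Phi^{-1}$) is a continuous, strictly increasing bijection between $\mathbb{R}$ and $(0,1)$, with the natural extensions $\Phi^{-1}(0) = -\infty$ and $\Phi^{-1}(1) = +\infty$ (producing $A^\ast = \varnothing$ and $A^\ast = \mathbb{R}^d$ respectively in the degenerate cases).

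For axiom \eqref{item: rearrangements are open}, I would observe that $A^\ast$ is the preimage of the open ray $(-\infty, \Phi^{-1}(\mu(A)))$ under the continuous projection $x \mapsto x_i$, hence is open. To check the measure-preservation $\alpha(A^\ast) = \mu(A)$, I would invoke the fact that the $i$-th marginal of $\gamma_d$ is the standard one-dimensional Gaussian $\gamma$, so
\begin{align*}
    \gamma_d(A^\ast) = \gamma_d \{ x : x_i < \Phi^{-1}(\mu(A))\} = \Phi(\Phi^{-1}(\mu(A))) = \mu(A).
\end{align*}

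For axiom \eqref{item: non-increasingness}, I would use strict monotonicity of $\Phi^{-1}$: if $\mu(A) \leq \mu(B)$ then $\Phi^{-1}(\mu(A)) \leq \Phi^{-1}(\mu(B))$, giving directly the containment $A^\ast \subseteq B^\ast$ of nested half-spaces.

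For axiom \eqref{item: continuity of rearrangement}, given an increasing sequence $A_n \subseteq A_{n+1}$ with union $A$, continuity of $\mu$ from below yields $\mu(A_n) \uparrow \mu(A)$. Continuity of $\Phi^{-1}$ on $[0,1]$ (extended in the obvious way) then gives $c_n \coloneqq \Phi^{-1}(\mu(A_n)) \uparrow c \coloneqq \Phi^{-1}(\mu(A))$. Since a strict inequality $x_i < c$ holds iff $x_i < c_n$ for some $n$, we obtain
\begin{align*}
    \bigcup_{n=1}^\infty A_n^\ast = \bigcup_{n=1}^\infty \{ x : x_i < c_n \} = \{ x : x_i < c \} = A^\ast,
\end{align*}
which is the required equality. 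The only mildly delicate point is the edge-case bookkeeping at $\mu(A) \in \{0,1\}$ (and verifying that the supremum identity above holds regardless of whether $\sup_n c_n$ is attained), but there is no real obstacle here; the result follows directly from the monotonicity and continuity properties of $\Phi^{-1}$ together with the product structure of $\gamma_d$.
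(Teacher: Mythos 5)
Your proof is correct and follows essentially the same route as the paper, which simply notes that $A^*$ is open with $\gamma_d(A^*) = \Phi(\Phi^{-1}(\mu(A))) = \mu(A)$ and that axioms \eqref{item: non-increasingness} and \eqref{item: continuity of rearrangement} follow from the monotonicity and continuity of $\Phi$. You have merely spelled out the details (the Gaussian marginal, continuity of $\mu$ from below, and the union of nested half-spaces), including the harmless edge cases $\mu(A) \in \{0,1\}$.
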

\begin{proof}
$A^*$ is open by definition, and $\gamma_d( A^*) = \Phi(\Phi^{-1}(\mu(A))) = \mu(A).$  Conditions \eqref{item: non-increasingness} and \eqref{item: continuity of rearrangement} follow from the monotonicity and continuity of $\Phi$.
\end{proof}

\section{Rearrangement and Pr\'ekopa-Leindler} \label{sec: PLI rearrange}
We begin with a special case of a more general result to build some intuition for the abstractions to follow.  For $f, g: \mathbb{R}^d \to [0,\infty)$ and $t \in [0,1]$ recall
\begin{align} \label{eq: PLI square operation}
    f \square g(z) = \sup_{(1-t) x + ty = z} f^{1-t}(x) g^t(y).
\end{align}
\begin{thm} \label{thm: PLI rearrangement}
    For $f,g : \mathbb{R}^d \to [0,\infty)$ Borel, $t \in (0,1)$, and $*$ denoting a rearrangement to a fixed open convex set with closure containing the origin, 
    \begin{align} \label{eq: qualitative PLI}
    \int_{\mathbb{R}^d} f \square g(z) dz \geq \int_{\mathbb{R}^d} f^* \square g^*(z) dz \geq  \left( \int f dz \right)^{1-t} \left( \int g dz \right)^t.
    \end{align}
    What is more, when $\psi$ is a non-negative and non-decreasing function
    \begin{align} \label{eq: qualitative PLI extended to increasing functions}
        \int_{\mathbb{R}^d} \psi(f \square g) (z) dz \geq \int_{\mathbb{R}^d} \psi( f^* \square g^* )(z) dz.
    \end{align}
\end{thm}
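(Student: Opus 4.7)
The plan is to reduce the functional statement to a super-level-set comparison and then lift it back via a layer-cake argument. Specifically, I will prove that for every $s>0$,
\[
|\{f \square g > s\}|_d \;\geq\; |\{f^* \square g^* > s\}|_d,
\]
and then deduce \eqref{eq: qualitative PLI extended to increasing functions} by integrating against $d\psi$. The inner inequality $\int f^* \square g^* \geq (\int f)^{1-t}(\int g)^t$ will come for free: apply the ordinary PLI (Theorem \ref{thm: PLI}) to $f^*, g^*$ and use equimeasurability (Proposition \ref{prop: characterization of rearrangement}).

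For the super-level-set step, I first note the decomposition
\[
\{f \square g > s\} \;=\; \bigcup_{\substack{a,b > 0 \\ a^{1-t}b^t > s}} \Big[(1-t)\{f > a\} + t\{g > b\}\Big],
\]
where the $\supseteq$ direction is immediate from the definition of $\square$, and the $\subseteq$ direction follows by picking a near-optimal representative $(1-t)x + ty = z$ with $f(x)^{1-t}g(y)^t > s$ and choosing $a,b$ slightly below $f(x),g(y)$. For each fixed $(a,b)$, the ordinary Brunn-Minkowski inequality yields
\[
|(1-t)\{f>a\}+t\{g>b\}|_d^{1/d} \;\geq\; (1-t)|\{f>a\}|_d^{1/d} + t|\{g>b\}|_d^{1/d}.
\]

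To relate this to $f^* \square g^*$, use Proposition \ref{prop: characterization of rearrangement} and the construction in Proposition \ref{prop: convex set rearrangement holds}: writing $\{f^* > a\} = r_a K$ and $\{g^* > b\} = s_b K$ with $r_a^d|K|_d = |\{f>a\}|_d$, $s_b^d|K|_d=|\{g>b\}|_d$, the convexity of $K$ gives $(1-t) r_a K + t s_b K = ((1-t) r_a + t s_b)K$. Thus the BMI bound above reads $|(1-t)\{f>a\}+t\{g>b\}|_d \geq |(1-t)\{f^*>a\}+t\{g^*>b\}|_d$. Moreover the rearranged super-level set unionizes cleanly:
\[
\{f^* \square g^* > s\} \;=\; \bigcup_{a^{1-t}b^t>s} ((1-t)r_a + t s_b)\,K \;=\; R_s K, \qquad R_s := \sup_{a^{1-t}b^t>s}\!\big[(1-t)r_a+ts_b\big],
\]
using that $K$ is open with $0\in\overline{K}$ so nested dilates $rK$ sweep out $R_s K$. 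Taking the sup over $(a,b)$ in the pointwise bound gives $|\{f\square g > s\}|_d \geq R_s^d|K|_d = |\{f^*\square g^* > s\}|_d$. Finally, for $\psi$ non-negative non-decreasing, the Lebesgue-Stieltjes layer cake $\int \psi(F)\,dz = \psi(0)|\{F>0\}|_d + \int_0^\infty |\{F>s\}|_d\,d\psi(s)$ (with both sides interpreted as $+\infty$ if $\psi(0)>0$) transfers the super-level inequality to $\int \psi(f\square g) \geq \int \psi(f^* \square g^*)$.

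The main obstacle is the pair of set-theoretic manipulations in Step~3: justifying the decomposition of $\{f\square g > s\}$ as the indicated union of Minkowski sums, and confirming that the corresponding union of dilates $((1-t)r_a+ts_b)K$ genuinely equals $R_s K$. The latter needs openness of $K$ together with the fact that $0\in\overline{K}$, exactly the hypothesis already used to make $*_K$ a rearrangement in Proposition \ref{prop: convex set rearrangement holds}; the former is a careful $\varepsilon$-argument using the strict inequality in the definition of $\square$. The layer-cake upgrade in Step~5 is routine once monotone rearrangement-of-super-levels is in hand.
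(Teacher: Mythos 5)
Your proposal is correct and is essentially the paper's own argument: the same decomposition of $\{f \square g > s\}$ into Minkowski combinations $(1-t)\{f>a\}+t\{g>b\}$ over pairs with $a^{1-t}b^t>s$, termwise Brunn--Minkowski, nestedness of the rearranged sets (dilates of $K$, using openness and $0\in\overline{K}$) to identify the measure of the rearranged union with the supremum, and a layer-cake step to pass to $\psi$ and to the integrals. The one point you should add, which the paper makes explicitly: index the union by \emph{rational} pairs $(a,b)$ (the resulting set is the same), so that $\{f\square g>s\}$ is a countable union of Minkowski sums of Borel sets and hence analytic, therefore universally measurable --- this is what makes $|\{f\square g>s\}|_d$ and $\int \psi(f\square g)\,dz$ well defined in the first place; with that in hand your Stieltjes layer-cake goes through (modulo the routine continuity-of-measure adjustment at jump points of $\psi$, which the paper handles by noting $\psi^{-1}(\lambda,\infty)$ is of the form $(x,\infty)$ or $[x,\infty)$).
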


    The universal measurability of $f \square g$ will follow from the proof, which gives the universal measurability of $\psi( f \square g)$ as a consequence.

\begin{proof}
  For $\lambda \in (0,\infty)$, define 
        \begin{align} \label{eq:Set defintion for indexing}
        S_0 = S_0(\lambda) = \{ s \in \mathbb{Q}^2_+ : s_1^{1-t} s_2^t > \lambda \}.
        \end{align}
    Observe,
    \begin{align} \label{eq:characterizing formula for super level sets}
        \{ f \square g > \lambda \} = \bigcup_{s \in S_0(\lambda)} (1-t) \{ f > s_1\} + t \{ g > s_2\}.
    \end{align}
    Indeed, it is routine to check that $z \in \cup_{s \in S_0} (1-t) \{ f > s_1 \} + t \{ g > s_2\}$ implies $f \square g(z) > \lambda$. Conversely, if $f\square g (z) > \lambda $, then there exists a pair of $x$ and $y$ such that $(1-t)x + t y =z$ and $f^{1-t}(x)g^t(y) > \lambda$.  By the continuity of the map $(u,v) \mapsto u^{1-t}v^t$, there exists $(s_1,s_2)$ rational satisfying $s_1< f(x)$, $s_2 < g(y)$, and $s_1^{1-t}s_2^t > \lambda$, which proves the claim.
    
    Let us remark, that the sum of Borel sets is universally measurable\footnote{This follows from the fact that Borel sets are analytic, see \cite{kechris2012classical}, and analytic sets are closed under summation and universally measurable.}, and hence $\{ f \square g > \lambda \}$ is as well.  This shows we are well justified in our notation $\int_{\mathbb{R}^d} f \square g (z) dz$.  By Brunn-Minkowski and the characterizing property of rearrangements on super level sets
    \begin{align} \label{eq: BMI on superlevel sets}
        |(1-t) \{ f > s_1 \} + t \{ g > s_2\}|
            &\geq
                |(1-t) \{ f > s_1 \}^* + t \{ g > s_2\}^*|
                    \\
            &=
                |(1-t) \{ f^* > s_1 \} + t \{ g^* > s_2\}|.
    \end{align}
    Now applying \eqref{eq:characterizing formula for super level sets} to $f^* \square g^*$ and observing that,
    \begin{align*}
        (1-t) \{ f^* > s_1 \} + t \{ g^* > s_2\}
    \end{align*}
    is an origin centered ball in $\mathbb{R}^d$ for every $s \in S_0(\lambda)$, we see that 
    \begin{align*}
       | \{ f^* \square g^* > \lambda \}|
            &=
                \left| \bigcup_{s \in S_0(\lambda)} (1-t)\{ f^* > s_1 \} + t \{ g^* > s_2 \} \right|
                    \\
            &= \sup_{s \in S_0} \left| (1-t)\{ f^* > s_1 \} + t \{ g^* > s_2 \} \right|.
    \end{align*}
    But using \eqref{eq: BMI on superlevel sets}, obviously
        \begin{align*}
            \left| (1-t)\{ f^* > s_1 \} + t \{ g^* > s_2 \}  \right|  \leq \left| \bigcup_{s \in S_0(\lambda)} (1-t) \{ f > s_1\} + t \{ g > s_2\} \right|
        \end{align*}
        and thus it follows that
        \begin{align} \label{eq: measure dominance on sup convo}
            |\{ f \square g > \lambda \}| \geq | \{ f^* \square g^* > \lambda \}|.
        \end{align}
    Using the layer-cake decomposition of the integral
    \begin{align*}
        \int_{\mathbb{R}^d} \psi(f\square g)(z) dz = \int_0^\infty |\{ \psi(f \square g ) > t \}| d t.
    \end{align*}
    Notice that by the non-decreasingness, $\psi^{-1}(\lambda, \infty)$ is an interval of the form $[x,\infty)$ or $(x,\infty)$ for a non-negative $x$, and from this, we can use \eqref{eq: measure dominance on sup convo} (and continuity of measure if the interval is closed) we obtain \eqref{eq: qualitative PLI extended to increasing functions}.  To recover \eqref{eq: qualitative PLI}, note that the first inequality follows from setting $\psi(x) = x$, while the second is the application of PLI to $f^*$ and $g^*$ combined with the equimeasurability of the rearrangements ensuring $\int f^* = \int f$ and $\int g^* = \int g$.
\end{proof}

\section{Functional lifting of rearrangements} \label{sec: General rearrange}

In this section we show that in a general setting, certain set theoretic rearrangement inequalities can be extended to functional analogs, extending the rearrangement inequality proven for PLI in the previous section to more general operations than $\square$ in \eqref{eq: PLI square operation}.    
Let us make precise the set theoretic rearrangement inequality we will generalize.  
\begin{defn}
Let $m:M^n \to M$ and $\eta: N^n \to N$ be such that $m(A_1, \dots, A_n) = \{x = m(a_1, \dots, a_n) : a_i \in A_i \}$ and $\eta(B_1, \dots, B_n) = \{ y = \eta(b_1, \dots, b_n) : b_i \in B_i \}$ are universally measurable for $A_i$ and $B_j$ Borel.  Suppose further that $\{\eta(A_1^*, \dots, A_n^*)\}_A$ indexed on $n$-tuples of Borel sets is totally ordered in the sense that for any Borel $A_1, \dots, A_n$ and $A'_1, \dots, A'_n$ we have either
$
    \eta(A_1^*, \dots, A_n^*) \subseteq \eta({A'}_1^*, \dots, {A'}_n^*)
\mbox{ or } 
    \eta(A_1^*, \dots, A_n^*) \supseteq \eta({A'}_1^*, \dots, {A'}_n^*)
$
we say that $*$ satisfies a set theoretic rearrangement inequality when the following holds
\begin{align*}
    \mu( m(A_1, \dots, A_n))  \geq \alpha( \eta (A_1^*, \dots, A_n^*)).
\end{align*}
\end{defn}

We will focus on two main examples, the rearrangement to convex sets in Euclidean space, and rearrangement to half-spaces in Gaussian space. 

\begin{prop} \label{prop: BMI rearrangement example}
When $(M,m,\mu) = (N,\eta, \alpha) = (\mathbb{R}^d, m_t, dx)$, and $t = (t_1, \dots, t_n) \in \mathbb{R}^n$, defines a map $m_t$ by vector space operations,
\begin{align}\label{eq:vector space addition map}
    x = (x_1, \dots, x_n) \mapsto \sum_{i=1}^n t_i x_i,
\end{align}
then the $*K$ rearrangement, as in Section \ref{sec:Preliminaries}, for $K$ open, convex, and symmetric, satisfies a set theoretic rearrangement inequality.  If the $t_i$ are assumed positive, $*K$ satisfies a set theoretic rearrangement without symmetry if $0$ belongs to the closure of $K$.
\end{prop}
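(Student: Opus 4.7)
The plan is to verify the two ingredients required by the definition of a set-theoretic rearrangement inequality: (i) that the family $\{m_t(A_1^{*_K},\ldots,A_n^{*_K})\}$ is totally ordered by inclusion, and (ii) that $|m_t(A_1,\ldots,A_n)|_d \geq |m_t(A_1^{*_K},\ldots,A_n^{*_K})|_d$. The key observation is that applying $m_t$ to rearranged sets produces a single dilate of $K$, reducing both the nesting and the measure comparison to one scalar computation.

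Writing $A_i^{*_K} = c_i K$ with $c_i = (|A_i|_d/|K|_d)^{1/d}$, I would first establish that $sK + rK = (s+r)K$ for $s,r > 0$ by convexity: the inclusion $(s+r)K \subseteq sK+rK$ is immediate, while $sk_1 + rk_2 = (s+r)\bigl(\tfrac{s}{s+r}k_1 + \tfrac{r}{s+r}k_2\bigr) \in (s+r)K$ yields the reverse. Iterating, $m_t(A_1^{*_K},\ldots,A_n^{*_K}) = \bigl(\sum_i t_i c_i\bigr) K$ whenever every $t_i > 0$. When $K$ is symmetric we can drop the sign constraint, since $t_i A_i^{*_K} = t_i c_i K = |t_i| c_i K$ by $-K = K$, giving $m_t(A_1^{*_K},\ldots,A_n^{*_K}) = \bigl(\sum_i |t_i| c_i\bigr) K$ in the symmetric case. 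In either setting the image is a dilate of $K$, and the nesting of dilates proved in the footnote to Proposition \ref{prop: convex set rearrangement holds} (which is where the $0 \in \overline{K}$ hypothesis is used) supplies the required total ordering.

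For the measure inequality I would invoke Brunn-Minkowski in the form $|B_1 + \cdots + B_n|_d^{1/d} \geq \sum_i |B_i|_d^{1/d}$, together with the homogeneity $|tB|_d^{1/d} = |t|\,|B|_d^{1/d}$, to obtain
\[
    |m_t(A_1,\ldots,A_n)|_d^{1/d} \geq \sum_i |t_i|\,|A_i|_d^{1/d} = \Bigl(\sum_i |t_i| c_i\Bigr) |K|_d^{1/d},
\]
which upon raising to the $d$-th power matches $|m_t(A_1^{*_K},\ldots,A_n^{*_K})|_d$ exactly. Universal measurability of $m_t(A_1,\ldots,A_n) = \sum_i t_i A_i$ is inherited from its description as a continuous image of the Borel product $\prod_i A_i$, hence analytic, as was already used for $\{f \square g > \lambda\}$ in Section \ref{sec: PLI rearrange}.

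The only point of care is the dilation identity $\sum_i t_i(c_i K) = \bigl(\sum_i |t_i| c_i\bigr) K$: this is precisely where the hypothesis of either central symmetry of $K$ or positivity of the $t_i$ is needed, since without one of them a signed dilate $t_i c_i K$ need not agree with $|t_i| c_i K$ and the sum will generally fail to be a single dilate of $K$. Once this identity is in hand, everything else is a clean application of Brunn-Minkowski.
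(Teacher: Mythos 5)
Your proof is correct and takes essentially the same approach as the paper: show that the rearranged images under $m_t$ are single dilates of $K$ (hence nested), and deduce the measure inequality from Brunn--Minkowski, which in the $1/d$-power superadditive form matches the volume of the rearranged sum exactly. The only cosmetic difference is that you absorb negative $t_i$ via homogeneity of Lebesgue measure and $-K=K$, whereas the paper reflects the sets (setting $B_i=\mathrm{sgn}(t_i)A_i$) before invoking the set-level rearrangement form of Brunn--Minkowski; the substance is identical.
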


\begin{proof}Take $B_i = \hbox{sgn} (t_i) A_i$ so that $t_1 A_1 + \cdots + t_n A_n = |t_1| B_1 + \cdots + |t_n| B_n$.  Using the symmetry and convexity of $K$, and the definition of our rearrangement as a scaling of $K$, it follows that
\begin{align*}
    t_1 A_1^* + \cdots + t_n A_n^*  = \left( \sum_{i=1}^n |t_i| |A_i|^{\frac 1 d} \right) K
\end{align*}
and hence that the images of $m_t$ are totally ordered.  Brunn-Minkowski implies that
\begin{align*}
    ||t_1|B_1 + \cdots + |t_n| B_n| \geq ||t_1| B_1^* + \cdots + |t_n| B_n^*|,
\end{align*}
it follows that
\begin{align*}
    |t_1 A_1 + \cdots + t_n A_n| \geq |t_1 A_1^* + \cdots + A_n^*|.
\end{align*}
When $t_i$ are positive, the proof is similar and simpler.
\end{proof}

\begin{prop} \label{prop: Gaussian rearrangement to one d}
    When $(M,m, \mu)$ is a centered Gaussian measure on a Banach space $M$ and $m$ defined as $x=(x_1,\dots,x_n) \mapsto \sum_i t_i x_i$ for $t_i > 0$, $\sum_i t_i =1$, and $(N,\eta,\alpha)$ with $N = \mathbb{R}^d$, $\eta$ defined by $y \mapsto \sum_i t_i y_i$ and $\alpha = \gamma_d$ the half-space rearrangement from Proposition \ref{prop: half space rearrangement holds} yields a set theoretic rearrangement inequality.
\end{prop}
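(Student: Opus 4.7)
The plan is to reduce the statement to (a) a direct computation identifying the Minkowski combination of the half-space rearrangements as a single half-space, and (b) an invocation of the Ehrhard--Borell inequality for centered Gaussian measures on Banach spaces.

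First I would fix the coordinate $j$ used in the half-space rearrangement of Proposition \ref{prop: half space rearrangement holds}, so that for each Borel $A_i \subseteq M$ one has
\[
A_i^{*} = \{ y \in \mathbb{R}^d : y_j < c_i \}, \qquad c_i \coloneqq \Phi^{-1}(\mu(A_i)).
\]
Since $t_i > 0$ and $\sum_i t_i = 1$, a short verification (any vector with $j$-th coordinate $<\sum_i t_i c_i$ is a $t$-combination of such half-space points, and conversely the $j$-th coordinate of any such combination is $<\sum_i t_i c_i$) gives
\[
\eta(A_1^{*},\ldots,A_n^{*}) \;=\; \sum_{i=1}^n t_i A_i^{*} \;=\; \Big\{ y \in \mathbb{R}^d : y_j < \sum_{i=1}^n t_i c_i \Big\}.
\]
This immediately shows that the family $\{\eta(A_1^{*},\ldots,A_n^{*})\}$ is totally ordered by inclusion, since half-spaces $\{y_j < c\}$ are nested in $c$, and also computes
\[
\alpha(\eta(A_1^{*},\ldots,A_n^{*})) \;=\; \Phi\!\left( \sum_{i=1}^n t_i \Phi^{-1}(\mu(A_i)) \right).
\]
Universal measurability of $m(A_1,\ldots,A_n) = \sum_i t_i A_i$ follows as in the PLI argument from the fact that continuous images of analytic sets are analytic, hence universally measurable.

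It then remains to verify
\[
\mu\!\left( \sum_{i=1}^n t_i A_i \right) \;\geq\; \Phi\!\left( \sum_{i=1}^n t_i \Phi^{-1}(\mu(A_i)) \right),
\]
which is precisely the Borell--Ehrhard inequality in the form $\Phi^{-1} \circ \mu$ is concave along Minkowski convex combinations, valid for Borel subsets of a Banach space equipped with a centered Gaussian measure. I would simply cite this (the $n$-set version is obtained by iterating the two-set inequality).

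The only subtlety, rather than a real obstacle, is the invocation of the full Borell extension of Ehrhard's inequality to arbitrary Borel sets (and to Banach space valued Gaussians), since the original Ehrhard inequality was restricted to convex sets; once that is granted the proof is a direct unwinding of definitions. Everything else (the universal measurability of $m(A_1,\ldots,A_n)$, the identification of $\sum_i t_i A_i^{*}$ as a half-space, and the total ordering) is routine.
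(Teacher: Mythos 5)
Your proposal is correct and follows essentially the same route as the paper: the paper simply notes that the proposition is the content of the Borell--Ehrhard theorem (stated there for two sets and discussed in the Gaussian applications section), which is exactly the inequality you invoke after unwinding the definitions. Your added verifications -- identifying $\sum_i t_i A_i^*$ as the half-space $\{y_j < \sum_i t_i \Phi^{-1}(\mu(A_i))\}$, the total ordering, measurability, and the iteration from two sets to $n$ -- are routine details the paper leaves implicit.
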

This is the content of the Borell-Ehrhard theorem, which we will discuss in more detail in Section \ref{subsec: The Gaussian case}.  Now let us generalize the geometric mean used in PLI. 

\begin{defn}
\normalfont
For $0 < T \leq \infty$, a function  
$\mathcal{M}:[0,T)^n \to [0,\infty]$ is 
{\it continuous coordinate increasing} when 
\begin{enumerate}
    \item 
$x, y  \in \mathbb{R}^n$ satisfying $x_i > y_i$ for all $i$, necessarily satisfy $\mathcal{M}(x) > \mathcal{M}(y)$
\item
$\mathcal{M}(x) = 0$ when $\prod_i x_i = 0$
\item
$\mathcal{M}(x) = \sup_{y<x} \mathcal{M}(y)$ with the convention that $\sup_{y<x} \mathcal{M}(y) = 0$ when $\{ y < x\} $ is empty.
\end{enumerate}
\end{defn}
By convention, in the case that $T$ is finite, we extend $\mathcal{M}$ to $[0,T]^n$ by $\mathcal{M}(x) = \sup_{y<x} \mathcal{M}(y)$.
It should also be assumed tacitly, all $\mathcal{M}$ that follow are defined to be zero on  $\{x: \prod_i x_i = 0\}$.
\subsection*{Examples}
\begin{enumerate}
    \item  \label{eq: p - means coordinate increasing func}
        For $t = (t_1, \dots, t_n)$ with $t_i>0$ and $p \in [-\infty,0) \cup (0, \infty]$ take for $u \in [0,\infty)^n$
        \begin{align} \label{eq: p -means}
            \mathcal{M}_p^t(u) = \left( t_1 u_1^p + \cdots + t_n u_n^p \right)^{\frac 1 p}.
        \end{align}
        with $M_{-\infty}^t(u) = \min_i u_i$ and $M_\infty^t(u) = \max_i u_i$
    \item 
        For $t = (t_1, \cdots, t_n)$ with $t_i >0$ and $u \in [0,\infty)^n$,
        \begin{align} \label{eq: geometric mean}
            \mathcal{M}_{0}^t(u) = \prod u_i^{t_i}.
        \end{align} 
        Note that in the case that $\sum_i t_i = 1$, $\mathcal M_0^t$ is the limiting case of the previous example.
    \item
        Define for $t_i > 0$ and $u \in (0,1)^n$,
        \begin{align*}
        \mathcal{M}^t_{\Phi}(u) = \Phi(t_1 \Phi^{-1}(u_1) + \cdots + t_n \Phi^{-1}(u_n))
        \end{align*}
\end{enumerate}

Now let us define the functional operation our set theoretic rearrangement inequalities may be generalized to.
\begin{defn} \normalfont \label{defn: square sup definition}
        For $\mathcal{M}$ a continuous coordinate increasing function, $f = \{ f_i \}_{i=1}^n$ with $f_i: M \to [0,T)$, and $m: M^n \to M$ define
        \begin{align*} 
            \square_{\mathcal{M},m} f (z) \coloneqq \sup_{m(x)=z} \mathcal{M}(f_1(x_1), \dots, f_n(x_n)).
        \end{align*}
        Let us further denote for a rearrangement $*$ satisfying a set theoretic rearrangement inequality, $f_* = \{f_i^* \}_{i=1}^n$, so that
        \begin{align*}
            \square_{\mathcal{M}, \eta} f_*(w) = \sup_{\eta(y)=w} \mathcal{M}(f^*_1(y_1), \dots, f^*_n(y_n)).
        \end{align*} 
        When there is no risk of ambiguity we will suppress the notation for the mapping $m$ and write $\square_{\mathcal{M}} f$ in place of $\square_{\mathcal{M},m} f$.
\end{defn}

Notice that Theorem \ref{thm: PLI rearrangement} was the case that $m(x,y) = \eta(x,y) = (1-t)x+ty$ and $\mathcal{M}$ taken to be the geometric mean as in \eqref{eq: geometric mean}.   

\begin{thm} \label{thm: Main Theorem}
   A set theoretic rearrangement inequality,
    \begin{align*}
        \mu(m(A_1, \dots, A_n)) \geq \alpha(\eta(A_1^*, \dots, A_n^*))
    \end{align*}
    can be extended to functions in the sense that for $f= \{f_i\}_{i=1}^n$, with $f_i$ Borel measurable from $M$ to $[0,\infty)$, $\mathcal{M}$ a continuous coordinate increasing function, and a non-negative non-decreasing $\psi$,
    \begin{align*}
        \int \psi(\square_{{\mathcal{M},m}} f) d \mu \geq \int \psi(\square_{\mathcal{M},\eta} f_*) d\alpha.
    \end{align*}
\end{thm}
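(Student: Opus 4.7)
The plan is to follow the template of Theorem \ref{thm: PLI rearrangement}, replacing the particular $\square$ operation by $\square_{\mathcal{M},m}$ and Brunn--Minkowski by the assumed set-theoretic rearrangement inequality. The whole argument lives at the level of super-level sets, which I first want to decompose into a countable union of ``Minkowski-type'' pieces indexed by rationals. For each $\lambda > 0$, set
\begin{align*}
    S_0(\lambda) = \{ s \in \mathbb{Q}_+^n \cap [0,T)^n : \mathcal{M}(s) > \lambda \},
\end{align*}
and I would claim
\begin{align*}
    \{ \square_{\mathcal{M},m} f > \lambda \} = \bigcup_{s \in S_0(\lambda)} m(\{ f_1 > s_1\}, \dots, \{ f_n > s_n\}).
\end{align*}
The $\supseteq$ inclusion is immediate from the coordinate monotonicity of $\mathcal{M}$. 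For $\subseteq$, if $\square_{\mathcal{M},m}f(z) > \lambda$ there exists $x$ with $m(x)=z$ and $\mathcal{M}(f_1(x_1),\dots,f_n(x_n)) > \lambda$; the third property of a continuous coordinate increasing function then lets me approximate each $f_i(x_i)$ from below by a rational $s_i$ while keeping $\mathcal{M}(s) > \lambda$. This decomposition also confers universal measurability on $\{\square_{\mathcal{M},m}f > \lambda\}$ (and hence on $\square_{\mathcal{M},m}f$) via the measurability hypothesis built into the definition of $m$.

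Apply the set-theoretic rearrangement inequality term by term, using Proposition \ref{prop: characterization of rearrangement} to rewrite $\{f_i > s_i\}^* = \{f_i^* > s_i\}$. For each $s \in S_0(\lambda)$,
\begin{align*}
    \mu\bigl( m(\{ f_1 > s_1\}, \dots, \{f_n > s_n\}) \bigr)
    \geq
    \alpha\bigl( \eta(\{ f_1^* > s_1 \}, \dots, \{ f_n^* > s_n \}) \bigr).
\end{align*}
Running the same decomposition for the rearranged data yields
\begin{align*}
    \{ \square_{\mathcal{M},\eta} f_* > \lambda \} = \bigcup_{s \in S_0(\lambda)} \eta(\{ f_1^* > s_1 \}, \dots, \{ f_n^* > s_n \}).
\end{align*}
The total-ordering hypothesis on $\{\eta(A_1^*,\dots,A_n^*)\}$ now plays the role Brunn--Minkowski played in Theorem \ref{thm: PLI rearrangement}: the family on the right is totally ordered by inclusion, and since $S_0(\lambda)$ is countable, I can extract an increasing chain whose union equals the full union, so by continuity of measure
\begin{align*}
    \alpha\bigl(\{ \square_{\mathcal{M},\eta} f_* > \lambda \}\bigr) = \sup_{s \in S_0(\lambda)} \alpha\bigl( \eta(\{ f_1^* > s_1 \}, \dots, \{ f_n^* > s_n \}) \bigr).
\end{align*}
Combining this with the term-by-term inequality and the trivial containment $m(\{f_1>s_1\},\dots,\{f_n>s_n\}) \subseteq \{\square_{\mathcal{M},m}f>\lambda\}$ gives, after taking the supremum over $s$,
\begin{align*}
    \mu\bigl( \{ \square_{\mathcal{M},m} f > \lambda \} \bigr) \geq \alpha\bigl( \{ \square_{\mathcal{M},\eta} f_* > \lambda \} \bigr).
\end{align*}

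Finally, I would close via the layer-cake representation. For $\psi$ non-negative and non-decreasing, each pre-image $\psi^{-1}((\lambda,\infty))$ is an interval of the form $(x_\lambda,\infty)$ or $[x_\lambda,\infty)$ with $x_\lambda \geq 0$, so
\begin{align*}
    \mu\bigl(\{ \psi(\square_{\mathcal{M},m}f) > \lambda \}\bigr) = \mu\bigl(\{ \square_{\mathcal{M},m} f > x_\lambda \}\bigr)
\end{align*}
(with the obvious modification, justified by continuity of measure, in the closed case), and analogously on the $\alpha$ side. Integrating the previous display against $d\lambda$ then yields the theorem. The only genuinely delicate step is verifying the super-level set decomposition, where the third clause of the definition of a continuous coordinate increasing function is essential; once that is in place, the total-ordering assumption carries the rest of the argument with minimal work.
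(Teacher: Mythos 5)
Your proposal is correct and follows essentially the same route as the paper's proof: the rational-indexed super-level set decomposition, the term-by-term application of the set-theoretic rearrangement inequality via $\{f_i > s_i\}^* = \{f_i^* > s_i\}$, the use of total ordering to turn the measure of the union into a supremum, and the layer-cake step for non-decreasing $\psi$ all match. Your explicit extraction of an increasing chain and appeal to continuity of measure is just a slightly more detailed justification of the step the paper attributes directly to total orderedness.
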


\begin{proof}
For $\lambda > 0$, write
\begin{align*}
   S_{\mathcal{M}}(\lambda) = \{ q \in \mathbb{Q}_+^n : \mathcal{M}(q) > \lambda \}.
\end{align*}
    We will prove $\mu( \square_{\mathcal{M}} f > \lambda ) \geq \alpha( \square_{\mathcal{M}} f_* > \lambda)$.  First observe that by arguments similar to the proof of Theorem \ref{thm: PLI rearrangement}
    \begin{align} \label{eq: description of superlevel sets as countable union}
        \{ \square_{\mathcal{M}} f > \lambda \} = \bigcup_{q \in S_{\mathcal{M}}(\lambda)} m(\{f_1> q_1\}, \dots, \{f_n > q_n \}).
    \end{align}
    Indeed, suppose $\square_{\mathcal{M}} f(z) > \lambda$.  This implies the existence of some $x$ such that $m(x) = z$ and $\mathcal{M}(f_1(x_1), \dots, f_n(x_n)) > \lambda$.  By the continuity of $\mathcal{M}$ there exists $q \in S_{\mathcal{M}}(\lambda)$ such that $\mathcal{M}(q_1, \dots, q_n) > \lambda$ and $f(x_i) > q_i$.  The opposite direction is immediate.  Observe that by our measurability assumptions on $m$ and \eqref{eq: description of superlevel sets as countable union}, the superlevel sets of $\square_{\mathcal{M},m} f$ are universally measurable.  Since $\psi$ is necessarily Borel measurable by its monotonicity, its composition with $\square_{\mathcal{M},m} f $ is indeed universally measurable.
    Analogously (note that $f_i^*$ are Borel measurable, by lower semi-continuity),
    \begin{align} \label{eq:  ***description of superlevel sets as countable union****}
              \{ \square_{\mathcal{M}} f_* > \lambda \} = \bigcup_{q \in S_{\mathcal{M}}(\lambda)} \eta(\{f_1^*> q_1\}, \dots, \{f_n^* > q_n \}).
    \end{align}
    This gives
    \begin{align*}
       \mu \{ \square_{\mathcal{M}} f > \lambda \} 
            &= 
                \mu \left(\bigcup_{q \in S_{\mathcal{M}}(\lambda)} m(\{f_1> q_1\}, \dots, \{f_n > q_n \}) \right).
               \\
            &\geq
                \sup_{q \in S_{\mathcal{M}}(\lambda)} \mu(m(\{f_1> q_1\}, \dots, \{f_n > q_n \}))
                    \\
            &\geq
                \sup_{q \in S_{\mathcal{M}}(\lambda)} \alpha(\eta(\{f_1> q_1\}^*, \dots, \{f_n > q_n \}^*))
                    \\
            &=
                \alpha \left(\bigcup_{q \in S_{\mathcal{M}}(\lambda)} \eta(\{f_1^*> q_1\}, \dots, \{f_n^* > q_n \}) \right)
                    \\
            &=
                \alpha \{ \square_{\mathcal{M}} f_* > \lambda \}
       \end{align*}
       where the first inequality is obvious, the second is by the assumed set theoretic rearrangment inequality, and the following equality is by the assumption of total orderedness.  The last equality is the from \eqref{eq:  ***description of superlevel sets as countable union****}.
    \end{proof}

\section{Applications} \label{sec: Applications}

\subsection{Borell-Brascamp-Lieb type inequalities}
In the case that $\lambda \in (0,1)$ and $-\infty \leq p \leq \infty$, we recall from example \eqref{eq: p - means coordinate increasing func} the following continuous coordinate increasing function,
\begin{align}
    \mathcal{M}(u,v) = \mathcal{M}_p^\lambda(u,v) = \begin{cases} 
      ((1-\lambda) u^p + \lambda v^p )^{\frac 1 p} & \text{if } uv \neq 0 \\
   0       & \text{if } uv =0.
  \end{cases}
\end{align}
 The Borell-Brascamp-Lieb inequality, generalizes the PLI with the understanding that $\mathcal{M}_0^\lambda (u,v) = u^{1-\lambda} v ^\lambda$.  Note that $\mathcal{M}_{\infty}^\lambda(u,v) = \max \{ u,v\}$ and $\mathcal{M}_{-\infty}^\lambda(u,v) = \min \{ u,v\}$ as defined in equation \eqref{eq: p -means}.   If we define $f \square_{\mathcal{M}_p^\lambda} g $ using  $m(x,y) = (1-\lambda) x + \lambda y$ as in Definition \ref{defn: square sup definition} we can state the inequality as the following.
\begin{thm}[Borell-Brascamp-Lieb \cite{Bor75a,BL76a}] \label{thm:Borell-Brascamp-Lieb}
    For $\lambda \in (0,1)$ and Borel functions $f,g: \mathbb{R}^n \to [0,\infty)$,
    \begin{align*}
        \int f \square_{\mathcal{M}_p^\lambda} g(x) \hspace{1mm}  dx \geq \mathcal{M}_{p/(np+1)}^\lambda \left( \int f(x) dx, \int g(x)  dx \right)
    \end{align*}
    when $p \geq -1/n$.
\end{thm}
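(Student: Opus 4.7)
My plan is to combine Theorem \ref{thm: Main Theorem} with the one-dimensional weighted Borell-Brascamp-Lieb inequality via radial rearrangement. I would first verify that $\mathcal{M}_p^\lambda$ is a continuous coordinate increasing function on $[0,\infty)^2$: continuity and strict monotonicity on $(0,\infty)^2$ follow from the explicit formula, while $\mathcal{M}_p^\lambda(u,v) = 0$ on $\{uv=0\}$ holds by convention. Taking $m = \eta : (x,y) \mapsto (1-\lambda) x + \lambda y$ and $*$ the spherical rearrangement to the Euclidean ball---which satisfies a set-theoretic rearrangement inequality by Proposition \ref{prop: BMI rearrangement example}---Theorem \ref{thm: Main Theorem} applied with $\psi(t) = t$ gives
\[
    \int_{\mathbb{R}^n} f \square_{\mathcal{M}_p^\lambda} g \, dx \geq \int_{\mathbb{R}^n} f^* \square_{\mathcal{M}_p^\lambda} g^* \, dx.
\]

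Having reduced to radial non-increasing $f^*, g^*$, I would observe that each $\{f^* > s_1\}$ and $\{g^* > s_2\}$ is an origin-centered open ball of respective radii $(|\{f > s_1\}|_n/\omega_n)^{1/n}$ and $(|\{g > s_2\}|_n/\omega_n)^{1/n}$, by equimeasurability. Combining the super-level-set description \eqref{eq: description of superlevel sets as countable union} with the fact that the weighted Minkowski sum of origin-centered balls is itself an origin-centered ball of radius given by the corresponding weighted sum, one obtains the volume estimate
\[
    |\{f^* \square_{\mathcal{M}_p^\lambda} g^* > \mu\}|_n^{1/n} \geq (1-\lambda) |\{f > s_1\}|_n^{1/n} + \lambda |\{g > s_2\}|_n^{1/n}
\]
for every $(s_1,s_2)$ with $\mathcal{M}_p^\lambda(s_1,s_2) > \mu$.

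Writing $u(t) = |\{f>t\}|_n$, $v(t) = |\{g>t\}|_n$, $w(\mu) = |\{f^*\square g^*>\mu\}|_n$, the layer-cake identity reduces the theorem to the one-dimensional statement
\[
    \int_0^\infty w(\mu) \, d\mu \geq \mathcal{M}_{p/(np+1)}^\lambda \left( \int_0^\infty u(t) \, dt,\, \int_0^\infty v(t) \, dt \right)
\]
for non-increasing $u, v, w: [0,\infty) \to [0,\infty)$ satisfying $w(\mathcal{M}_p^\lambda(s_1,s_2))^{1/n} \geq (1-\lambda) u(s_1)^{1/n} + \lambda v(s_2)^{1/n}$. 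This is the classical one-dimensional weighted Borell-Brascamp-Lieb inequality, obtained by a change of variables along the monotone transport between the generalized inverses of $u^{1/n}$ and $v^{1/n}$ combined with a H\"older-type interpolation on $p$-means. The hypothesis $p \geq -1/n$ enters exactly to guarantee that $p/(np+1)$ is well-defined and that the relevant $p$-means interpolate correctly. I expect this last one-dimensional estimate to be the most technical step of the argument, though it is by now standard; the conceptually new content is that Theorem \ref{thm: Main Theorem} provides a clean rearrangement-based route from this one-dimensional statement to the $n$-dimensional theorem.
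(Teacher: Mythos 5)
The paper does not prove this theorem at all: it is quoted as a classical result of Borell and Brascamp--Lieb (cited to \cite{Bor75a,BL76a}) and then used as an ingredient in the paper's own rearrangement-sharpened version, so your argument has to stand on its own — and as written it does not. The entire analytic content is deferred to your final one-dimensional claim, which you call ``the classical one-dimensional weighted Borell-Brascamp-Lieb inequality'' and do not prove. It is not that statement: in the classical one-dimensional BBL the arithmetic mean acts on the spatial variable and $\mathcal{M}_p^\lambda$ on the function values, whereas your claim has $\mathcal{M}_p^\lambda$ acting on the \emph{levels} $s_1,s_2$ and the $\mathcal{M}_{1/n}^\lambda$-mean (the $1/n$-powers) acting on the values $u,v$. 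The target exponent $p/(np+1)$ is consistent with the heuristic $1/c=1/p+n$, and a lemma of this shape (essentially Borell's level-set lemma on means, valid precisely because $p+\tfrac1n\geq 0$) is indeed how the original proofs proceed — but that lemma is exactly where the theorem's content and the hypothesis $p\geq -1/n$ live, and gesturing at ``monotone transport between generalized inverses plus H\"older-type interpolation'' is not a proof of it. Until that one-dimensional inequality is stated correctly and established, you have only reduced BBL to BBL-like statement.

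There is also a concrete error in the intermediate step: the inequality $|\{f^*\square_{\mathcal{M}_p^\lambda} g^*>\mu\}|_n^{1/n}\geq (1-\lambda)|\{f>s_1\}|_n^{1/n}+\lambda|\{g>s_2\}|_n^{1/n}$ is false for arbitrary $(s_1,s_2)$ with $\mathcal{M}_p^\lambda(s_1,s_2)>\mu$, because if one superlevel set is empty the Minkowski sum is empty: take $f\equiv 0$, so $f^*\square_{\mathcal{M}_p^\lambda} g^*\equiv 0$ by the convention $\mathcal{M}(x)=0$ when $\prod_i x_i=0$, while the right-hand side is $\lambda|\{g>s_2\}|_n^{1/n}>0$. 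So the hypothesis of your one-dimensional lemma must be restricted to levels with $u(s_1)>0$ and $v(s_2)>0$, and the lemma must be proved under that restriction. Finally, note that the detour through Theorem \ref{thm: Main Theorem} buys nothing for this particular statement: the inclusion $\{f\square_{\mathcal{M}_p^\lambda} g>\mu\}\supseteq (1-\lambda)\{f>s_1\}+\lambda\{g>s_2\}$ together with Brunn--Minkowski already yields the same volume estimate for the original pair $f,g$ without rearranging, so with or without the rearrangement step the proof collapses onto the unproven one-dimensional lemma.
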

We present the following sharpening.
\begin{thm}
 For Borel functions $f,g : \mathbb{R}^n \to [0,\infty)$ and $*$ a rearrangement to a convex set,
        \begin{align*}
            \int  f \square_{\mathcal{M}_p^\lambda} g(x) \hspace{1mm} dx 
                &\geq
                    \int f^* \square_{\mathcal{M}_p^\lambda} g^*(x) \hspace{1mm} dx
                        \\
                &\geq
                    \mathcal{M}_{p/(np+1)}^\lambda \left( \int f(x) dx, \int g(x)  dx \right)
        \end{align*}
        when $p \geq - 1/n$.
\end{thm}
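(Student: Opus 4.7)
The plan is to apply Theorem \ref{thm: Main Theorem} directly to obtain the first inequality, and then feed the rearranged functions $f^*,g^*$ into the classical Borell-Brascamp-Lieb theorem \ref{thm:Borell-Brascamp-Lieb} together with equimeasurability to obtain the second.

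For the first inequality, I would specialize Theorem \ref{thm: Main Theorem} by taking $n=2$, $m(x,y)=\eta(x,y)=(1-\lambda)x+\lambda y$, $\mathcal{M}=\mathcal{M}_p^\lambda$, the rearrangement $*_K$ from Proposition \ref{prop: BMI rearrangement example}, and $\psi$ the identity. Three items need to be checked. First, the set-theoretic rearrangement inequality $|m(A,B)|\geq|\eta(A^{*_K},B^{*_K})|$ is exactly the content of Proposition \ref{prop: BMI rearrangement example} with positive coefficients $(1-\lambda,\lambda)$, which requires only that $\overline{K}$ contain the origin. Second, the family $\{(1-\lambda)A^{*_K}+\lambda B^{*_K}\}$ is totally ordered because each such sumset is a scalar multiple of the fixed convex set $K$, namely
\begin{align*}
(1-\lambda)A^{*_K}+\lambda B^{*_K} = \left((1-\lambda)\left(\tfrac{|A|}{|K|}\right)^{1/n}+\lambda\left(\tfrac{|B|}{|K|}\right)^{1/n}\right)K.
\end{align*}
Third, $\mathcal{M}_p^\lambda$ must be verified to be continuous coordinate increasing: on $(0,\infty)^2$ the strict monotonicity of $u\mapsto u^p$ composed with $s\mapsto s^{1/p}$ delivers the first two properties, and at boundary points where some $x_i=0$ the convention $\mathcal{M}_p^\lambda=0$ together with the emptiness of $\{y<x\}$ in $[0,\infty)^2$ yields the supremum condition trivially.

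For the second inequality, one applies the classical Borell-Brascamp-Lieb theorem \ref{thm:Borell-Brascamp-Lieb} to the pair $f^{*},g^{*}$, which is legitimate precisely under the hypothesis $p\geq -1/n$. Since Proposition \ref{prop: characterization of rearrangement} ensures $\int f^{*}=\int f$ and $\int g^{*}=\int g$, the argument of $\mathcal{M}_{p/(np+1)}^\lambda$ on the right is unchanged, and the estimate follows.

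The only mildly subtle point is the continuous coordinate increasing verification in the regime $p<0$, where $u\mapsto u^p$ blows up as $u\downarrow 0$ and the convention $\mathcal{M}_p^\lambda(u,v)=0$ on $\{uv=0\}$ creates a jump discontinuity. However, the definition demands only left-continuity in the form $\mathcal{M}(x)=\sup_{y<x}\mathcal{M}(y)$, and this is automatic: on the interior the function is continuous and monotone, while on the boundary $\{y<x\}\subseteq[0,\infty)^2$ is empty, so both sides equal $0$. Once this is in hand, the proof is purely a matter of specialization.
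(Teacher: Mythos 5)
Your proposal is correct and follows essentially the same route as the paper: the paper's proof simply cites Proposition \ref{prop: BMI rearrangement example} for the set-theoretic rearrangement inequality with the map $(x,y)\mapsto(1-\lambda)x+\lambda y$ and invokes Theorem \ref{thm: Main Theorem}, with the second inequality coming from Theorem \ref{thm:Borell-Brascamp-Lieb} applied to $f^*,g^*$ together with equimeasurability, exactly as you do (you merely make explicit the total-orderedness and the continuous-coordinate-increasing check that the paper leaves implicit). One cosmetic slip: for $p<0$ the convention $\mathcal{M}_p^\lambda(u,v)=0$ on $\{uv=0\}$ agrees with the limit as a coordinate tends to $0$, so there is no jump there — the jump occurs for $p>0$ — but your verification of the supremum condition holds either way since $\{y<x\}$ is empty at boundary points.
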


\begin{proof}
As described in Proposition \ref{prop: BMI rearrangement example} the Brunn Minkowski inequality shows that the usual Lebesgue measure with the map $(x,y) \mapsto (1-\lambda) x + ty)$ satisfy a set theoretic rearrangement inequality.  The result then follows from Theorem \ref{thm: Main Theorem}.
\end{proof}

\subsection{The Gaussian case} \label{subsec: The Gaussian case}
For simplicity we restrict ourselves to the $\mathbb{R}^d$ case and 
employ the rearrangement $*$ from the Gaussian measure space $(\mathbb{R}^d, \gamma_d)$ to $(\mathbb{R}, \gamma_1 )$, by
\begin{align*}
	A^{*} = \{ x \in \mathbb{R} : x < t \}
\end{align*}
where $t=\Phi^{-1}(\gamma_d(A))$ is chosen to satisfy
$
	\gamma_d(A) = \gamma(A^{*}).
$
A functional half-space rearrangement by
\begin{align*}
	f^{*}(x) = \int_0^\infty \mathbbm{1}_{\{ f >t \}^{*}}(x) dt.
\end{align*}

The Borell-Ehrhard's inequality \cite{Bor08, Ehr83} is usually stated as the assertion that $t \in (0,1)$, $A, B$ Borel in $\mathbb{R}^d$ imply
	    \begin{align*}
	        \gamma_d((1-t)A + t B) \geq \Phi( (1-t) \Phi^{-1}(\mu(A)) + t \Phi^{-1}(\mu(B)) ).
	    \end{align*}
It can be equivalently formulated in our terminology and notation .
\begin{thm}[Borell, Ehrhard \cite{Bor08,Ehr83}] \label{thm: Borell Ehrhard}
For $t \in (0,1)$, $m(x,y) = (1-t)x + ty$, $\eta(u,v) = (1-t) u + tv$, and $*$ our halfspace rearrangement from $(\mathbb{R}^d,\gamma_d)$ to $(\mathbb{R},\gamma)$, satisfy a the set theoretic rearrangement inequality, explicitly for Borel $A$ and $B$
\begin{align*}
    \gamma_d ((1-t)A + t B) \geq \gamma((1-t) A^* + t B^*).
\end{align*}

\end{thm}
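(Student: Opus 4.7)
The plan is to observe that in the half-space rearrangement, the operation $\eta(A^*, B^*) = (1-t) A^* + t B^*$ can be computed explicitly, after which the stated inequality reduces immediately to the classical Borell--Ehrhard inequality already recalled in the text.

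First, unwind the definitions. By Proposition \ref{prop: half space rearrangement holds}, $A^* = (-\infty, a)$ and $B^* = (-\infty, b)$, where $a = \Phi^{-1}(\gamma_d(A))$ and $b = \Phi^{-1}(\gamma_d(B))$ (we can dispose of the trivial cases $\gamma_d(A)=0$ or $\gamma_d(B)=0$ directly, since then the right-hand side vanishes or is degenerate). Since the Minkowski combination of two lower half-lines is again a lower half-line,
\[
    (1-t) A^* + t B^* = (-\infty, (1-t) a + t b),
\]
and consequently
\[
    \gamma\bigl((1-t) A^* + t B^*\bigr) = \Phi\bigl((1-t) \Phi^{-1}(\gamma_d(A)) + t \Phi^{-1}(\gamma_d(B))\bigr).
\]

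Second, invoke the classical Borell--Ehrhard inequality (in the form recalled in the paragraph preceding the theorem statement), which asserts precisely
\[
    \gamma_d\bigl((1-t) A + t B\bigr) \geq \Phi\bigl((1-t) \Phi^{-1}(\gamma_d(A)) + t \Phi^{-1}(\gamma_d(B))\bigr).
\]
Combining the two displays yields the desired set-theoretic rearrangement inequality. I should also check the side condition of the definition: $\{\eta(A_1^*, A_2^*)\}$ must be totally ordered. Since each $\eta(A^*, B^*)$ is a lower half-line in $\mathbb{R}$, and lower half-lines are linearly ordered by inclusion via their right endpoints, this condition is automatic.

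There is no real obstacle here; the theorem is essentially a repackaging of Borell--Ehrhard once one notes that half-space rearrangements linearize under Minkowski sums and that $\Phi \circ \eta \circ \Phi^{-1}$ is exactly the $\mathcal{M}_{\Phi}^{(1-t,t)}$-type mean appearing in the standard formulation. The only points worth writing carefully are the measurability of $(1-t)A + tB$ (already handled earlier in the paper by the analytic-sets argument) and the trivial cases where $\gamma_d(A)$ or $\gamma_d(B)$ equals $0$ or $1$, which are handled by interpreting $\Phi^{-1}$ at the endpoints as $\mp\infty$ and noting that the right-hand side then is bounded by $\gamma_d$ of all of $\mathbb{R}^d$ in the saturated case or is $0$ in the empty case.
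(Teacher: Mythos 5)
Your argument is correct and coincides with the paper's own treatment: the paper presents this theorem as an equivalent reformulation of the classical Borell--Ehrhard statement quoted just before it, the equivalence resting exactly on your computation that $(1-t)A^* + tB^*$ is the lower half-line with endpoint $(1-t)\Phi^{-1}(\gamma_d(A)) + t\Phi^{-1}(\gamma_d(B))$, whose $\gamma$-measure is $\Phi$ of that point. Your added remarks on total orderedness of half-lines and the degenerate cases are consistent with the paper's framework and introduce nothing at odds with it.
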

We will extend Theorem \ref{thm: Borell Ehrhard} to a functional inequality by Theorem \ref{thm: Main Theorem}.  However, it should be mentioned that the semigroup proof of Borell actually gave a functional inequality already. The argument was streamlined by Barthe and Huet and it is their generalization below that we will sharpen.
\begin{thm}[Barthe, Huet \cite{BH09}] \label{thm:Barthe}
Fix a set $I \subseteq \{1, 2, \dots, n\}$ and positive numbers $\lambda_1, \dots, \lambda_n$ satisfying 	$\sum \lambda_i \geq 1$ and
$
\lambda_j - \sum_{i \neq j} \lambda_i \leq 1
$
for $j \notin I$.
Then for Borel $f_1, \dots, f_n$ from $\mathbb{R}^d$ to $[0,1]$ such that $\Phi^{-1} \circ f_i$ is concave for $i \in I$, and a Borel
$h$ satisfying
$h(\sum_i \lambda_i x_i) \geq \Phi(\sum_i \lambda_i \Phi^{-1}(f_i(x_i)))$, then
	\begin{equation*}
		 \int h d \gamma_d  \geq \Phi \left( \lambda_1 \Phi^{-1}\left(\int f_1 d \gamma_d \right) + \cdots + \lambda_n \Phi^{-1} \left(\int f_n d \gamma_d \right) \right).
	\end{equation*}
\end{thm}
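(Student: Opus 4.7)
The plan is to derive Theorem~\ref{thm:Barthe} by combining the functional rearrangement lifting of Theorem~\ref{thm: Main Theorem} with an $n$-ary extension of Ehrhard's set inequality. The key technical input is the $n$-set Ehrhard-Borell inequality: for Borel $A_1,\ldots,A_n\subseteq\mathbb{R}^d$ with $A_i$ convex whenever $i\in I$ and $(\lambda_i)$ satisfying the stated constraints,
\[
\Phi^{-1}\!\Bigl(\gamma_d\Bigl(\textstyle\sum_i\lambda_i A_i\Bigr)\Bigr) \;\geq\; \sum_i\lambda_i\,\Phi^{-1}(\gamma_d(A_i)).
\]
This is the $n$-set refinement of Theorem~\ref{thm: Borell Ehrhard} (essentially due to Borell, with the heterogeneous hypothesis on $(\lambda_i, I)$ handled by the streamlined argument of Barthe and Huet): one evolves indicators along the Ornstein-Uhlenbeck semigroup $P_s$ and shows that an appropriate time-dependent functional is monotone, the sign of its derivative being controlled precisely by the constraints on $(\lambda_i, I)$.

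Next I would translate into the framework of Definition~\ref{def: rearrangement}. With $(M,\mu)=(\mathbb{R}^d,\gamma_d)$, $(N,\alpha)=(\mathbb{R},\gamma)$, $m(x)=\eta(y)=\sum_i\lambda_i(\cdot)_i$, and $*$ the half-space rearrangement of Proposition~\ref{prop: half space rearrangement holds}, the rearranged sets $A_i^*=(-\infty,\Phi^{-1}(\gamma_d(A_i)))$ give $\eta(A_1^*,\ldots,A_n^*)=(-\infty,\sum_i\lambda_i\Phi^{-1}(\gamma_d(A_i)))$, a half-line of $\gamma$-measure $\Phi(\sum_i\lambda_i\Phi^{-1}(\gamma_d(A_i)))$. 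Hence the preceding set inequality is exactly the required set-theoretic rearrangement inequality, and total-orderedness of $\{\eta(A_1^*,\ldots,A_n^*)\}$ is automatic since all its members are half-lines in $\mathbb{R}$. I would then invoke Theorem~\ref{thm: Main Theorem} with $\mathcal{M}=\mathcal{M}^\lambda_\Phi$ (Example~3 of Section~\ref{sec: General rearrange}) and $\psi=\mathrm{id}$. The concavity of $\Phi^{-1}\circ f_i$ for $i\in I$ forces $\{f_i>t\}=\{\Phi^{-1}(f_i)>\Phi^{-1}(t)\}$ to be convex at every level, legitimizing the set inequality level-wise; combined with the hypothesis $h\geq\square_{\mathcal{M}^\lambda_\Phi,m}f$ this yields
\[
\int h\,d\gamma_d \;\geq\; \int\square_{\mathcal{M}^\lambda_\Phi,m}f\,d\gamma_d \;\geq\; \int\square_{\mathcal{M}^\lambda_\Phi,\eta}f_*\,d\gamma.
\]

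The remaining task is to bound the right-hand side below by $\Phi(\sum_i\lambda_i\Phi^{-1}(\int f_i\,d\gamma_d))$. Each $f_i^*$ is non-increasing on $\mathbb{R}$ with half-line super-level sets, and weighted sums of half-lines remain half-lines, so the one-dimensional $n$-set Ehrhard degenerates to the exact identity $\gamma(\sum_i\lambda_i(-\infty,a_i))=\Phi(\sum_i\lambda_i a_i)$. Using the super-level-set description $\{\square_{\mathcal{M}^\lambda_\Phi,\eta}f_*>t\}=\bigcup_{\mathcal{M}^\lambda_\Phi(q)>t}\sum_i\lambda_i\{f_i^*>q_i\}$ from the proof of Theorem~\ref{thm: Main Theorem}, the equimeasurability $\int f_i^*\,d\gamma=\int f_i\,d\gamma_d$, and the layer-cake representation, this reduces to a one-dimensional functional Ehrhard inequality, which one proves by Borell's OU argument specialized to dimension one (where the relevant It\^o computation is considerably simpler). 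The principal obstacle is the very first step: the $n$-ary set-theoretic Ehrhard-Borell inequality under the heterogeneous convexity/weight hypothesis is the technical heart of the argument and rests on a delicate It\^o-type computation along the OU flow.
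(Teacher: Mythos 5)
First, a structural point: the paper does not prove Theorem \ref{thm:Barthe} at all; it is imported from Barthe and Huet \cite{BH09}, whose argument is an Ornstein--Uhlenbeck semigroup computation, so your proposal must be measured against that citation rather than an internal proof. Your route --- prove the heterogeneous $n$-set Ehrhard inequality by the OU flow, lift it to functions by the level-set machinery of Theorem \ref{thm: Main Theorem} (necessarily adapted, since for $I\neq\varnothing$ the set inequality holds only for tuples with $A_i$ convex for $i\in I$, so the theorem cannot be invoked verbatim; the paper makes this exact remark before Theorem \ref{thm:Barthe extension}), and then finish with the one-dimensional functional statement --- also inverts the paper's logic: the paper derives the set-theoretic statement (Corollary \ref{cor: Set theoretic Barthe-Huet}) \emph{from} the functional theorem, and your middle step essentially reproduces the paper's own proof of Theorem \ref{thm:Barthe extension}. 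As a proof of Theorem \ref{thm:Barthe} it is therefore at best a dimension reduction: both analytically hard ingredients (the $n$-set Ehrhard inequality under the heterogeneous $(\lambda_i,I)$ hypothesis, and the $d=1$ functional inequality) are deferred to precisely the Borell/Barthe--Huet semigroup computations that constitute the original proof.

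Second, there is a genuine gap at your final step. To bound $\int\square_{\mathcal{M}^\lambda_\Phi,\eta}f_*\,d\gamma$ below by $\Phi\bigl(\sum_i\lambda_i\Phi^{-1}(\int f_i^*\,d\gamma)\bigr)$ you apply the one-dimensional functional statement to the rearranged functions $f_1^*,\dots,f_n^*$; since the weight condition $\lambda_j-\sum_{i\neq j}\lambda_i\leq 1$ is assumed only for $j\notin I$, that application requires $\Phi^{-1}\circ f_i^*$ to be concave for $i\in I$, and this is not automatic: half-space rearrangement only guarantees that $f_i^*$ is monotone with half-line super-level sets, which is much weaker than $\Phi$-concavity. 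The paper needs a separate argument for exactly this point, Proposition \ref{prop: rearrangement preserves concavity} (Ehrhard applied to super-level sets together with the total ordering of half-lines), when proving Theorem \ref{thm:Barthe extension}; your proposal never establishes it. Relatedly, the claim that the one-dimensional step ``degenerates to the exact identity'' is misleading: the identity $\gamma(\sum_i\lambda_i(-\infty,a_i))=\Phi(\sum_i\lambda_i a_i)$ only computes measures of sums of half-lines, whereas the inequality you still need --- comparing $\int\square_{\mathcal{M}^\lambda_\Phi,\eta}f_*\,d\gamma$ with $\mathcal{M}^\lambda_\Phi$ of the integrals --- is a genuine one-dimensional functional Ehrhard inequality, and to prove it (by OU, or by quoting \cite{BH09} in dimension one) you must first supply the missing concavity of $\Phi^{-1}\circ f_i^*$ for $i\in I$.
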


A consequence of Theorem \ref{thm:Barthe} (and actually proven equivalent to Theorem \ref{thm:Barthe} in the same paper) is the following.
\begin{cor} \label{cor: Set theoretic Barthe-Huet}
Fix a set $I \subseteq \{1, 2, \dots, n\}$ and set of positive numbers $\lambda_1, \dots, \lambda_n$ satisfying 	$\sum \lambda_i \geq 1$ and
$
\lambda_j - \sum_{i \neq j} \lambda_i \leq 1
$
for $j \notin I$.
Then for Borel $A_j$, 
\begin{align*}
\gamma_d( \lambda_1 A_1 + \cdots + \lambda_n A_n) 
    &\geq 
        \Phi(\lambda_1 \Phi^{-1}(\gamma_d(A_1)) + \cdots + \lambda_n \Phi^{-1}(\gamma_d (A_n)))
            \\
    &=
        \gamma(\lambda_1 A_1^* + \cdots + \lambda_n A_n^*)
\end{align*}
holds, provided $A_i$ are convex when $i \in I$.
\end{cor}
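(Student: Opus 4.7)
The plan is to derive Corollary \ref{cor: Set theoretic Barthe-Huet} from Theorem \ref{thm:Barthe} by the standard indicator-function substitution that turns a Pr\'ekopa--Leindler-type functional inequality into a Brunn--Minkowski-type set inequality. Concretely, I would fix a small $\epsilon > 0$ and apply Theorem \ref{thm:Barthe} with $f_i = (1-\epsilon)\mathbbm{1}_{A_i}$ and $h = \mathbbm{1}_{\lambda_1 A_1 + \cdots + \lambda_n A_n}$ (the latter is universally measurable by the discussion following equation \eqref{eq:characterizing formula for super level sets}). For $i \in I$, the composition $\Phi^{-1}\circ f_i$ equals the constant $\Phi^{-1}(1-\epsilon)$ on $A_i$ and $-\infty$ on its complement; this is a bona fide extended-real concave function precisely because $A_i$ is assumed convex, so its effective domain is convex and it is constant on it. For $i \notin I$ no concavity hypothesis is needed.

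The pointwise hypothesis $h(\sum_i \lambda_i x_i) \geq \Phi(\sum_i \lambda_i \Phi^{-1}(f_i(x_i)))$ then splits into two cases. If some $x_i \notin A_i$, then $f_i(x_i) = 0$ and $\Phi^{-1}(f_i(x_i)) = -\infty$, so the right-hand side is $\Phi(-\infty) = 0$ and the inequality is automatic. If every $x_i \in A_i$, then $\sum_i \lambda_i x_i$ belongs to $\lambda_1 A_1 + \cdots + \lambda_n A_n$ by definition of the Minkowski combination, so $h(\sum_i \lambda_i x_i) = 1$, while the right-hand side equals $\Phi\bigl(\Phi^{-1}(1-\epsilon) \sum_i \lambda_i\bigr) < 1$. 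Theorem \ref{thm:Barthe} therefore yields
\begin{equation*}
    \gamma_d(\lambda_1 A_1 + \cdots + \lambda_n A_n) \;\geq\; \Phi\!\left( \sum_{i=1}^n \lambda_i \Phi^{-1}\bigl((1-\epsilon)\gamma_d(A_i)\bigr) \right),
\end{equation*}
and letting $\epsilon \downarrow 0$, by continuity of $\Phi^{-1}$ and $\Phi$, produces the main inequality of the corollary. The equality with $\gamma(\lambda_1 A_1^* + \cdots + \lambda_n A_n^*)$ is then immediate: by definition of the half-space rearrangement, $A_i^* = (-\infty, \Phi^{-1}(\gamma_d(A_i)))$, so $\lambda_1 A_1^* + \cdots + \lambda_n A_n^*$ is the half-line $(-\infty, \sum_i \lambda_i \Phi^{-1}(\gamma_d(A_i)))$, whose $\gamma$-measure is $\Phi(\sum_i \lambda_i \Phi^{-1}(\gamma_d(A_i)))$.

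The main obstacle is the boundary behavior at $\gamma_d(A_i) \in \{0,1\}$, where $\Phi^{-1}$ takes extended values and some care is needed to interpret both the sum of half-lines on the rearrangement side and the limit as $\epsilon \downarrow 0$. The case $\gamma_d(A_i) = 0$ collapses both sides to $0$ trivially, while the $(1-\epsilon)$-truncation is designed precisely to tame the case $\gamma_d(A_i) = 1$: the approximated integrals $(1-\epsilon)\gamma_d(A_i)$ stay strictly below $1$ so that $\Phi^{-1}$ remains finite along the approximation, and the limit recovers the correct (possibly trivial) statement. Modulo this bookkeeping, the substitution argument is entirely routine.
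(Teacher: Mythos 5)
Your derivation is correct and is exactly the route the paper intends: Corollary \ref{cor: Set theoretic Barthe-Huet} is presented as a consequence of Theorem \ref{thm:Barthe} (Barthe--Huet), and the indicator substitution $f_i=(1-\epsilon)\mathbbm{1}_{A_i}$, $h=\mathbbm{1}_{\lambda_1A_1+\cdots+\lambda_nA_n}$ with $\epsilon\downarrow 0$, together with the half-line computation for the equality, is the standard way that implication (indeed the equivalence in \cite{BH09}) is carried out. The only detail worth tightening is that Theorem \ref{thm:Barthe} is stated for Borel $h$ while the Minkowski combination is in general only universally measurable; this is closed either by applying the theorem to indicators of Borel supersets and taking the infimum (which equals $\gamma_d$ of the sum by universal measurability), or by working with compact, and for $i\in I$ convex, subsets $K_i\subseteq A_i$ and using inner regularity of $\gamma_d$.
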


Strictly speaking, unless $I$ is empty, the half-line rearrangement does not yield a set theoretic rearrangement inequality with the maps $m_\lambda(x) = \lambda_1 x_1 + \cdots + \lambda_n x_n$ and $\eta_\lambda(y) = \lambda_1 y_1 + \cdots + \lambda_n y_n$.  
However the proof of Theorem \ref{thm: Main Theorem} can be adapted to achieve the following refinement of Barthe-Huet.

\begin{thm} \label{thm:Barthe extension}
	For Borel $f_1, \dots, f_n$ from $\mathbb{R}^d$ to $[0,1]$ such that $\Phi^{-1} \circ f_i$ is concave for $i \in I$ and 
	\begin{align*}
		\int \square_{\mathcal{M}_\Phi^\lambda} f d \gamma_d  
			&\geq 
				\int \square_{\mathcal{M}_\Phi^\lambda} f_* d \gamma
					\\
			&\geq
				\mathcal{M}_\Phi^\lambda \left(\int f_1^* d \gamma, \dots, \int f_n^* d \gamma \right)
					\\
			&=
				\mathcal{M}_\Phi^\lambda \left(\int f_1 d \gamma, \dots, \int f_n d \gamma \right).
	\end{align*}
\end{thm}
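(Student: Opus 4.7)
The plan is to follow the structure of the proof of Theorem \ref{thm: Main Theorem}, working around the fact that the relevant set-theoretic inequality here, Corollary \ref{cor: Set theoretic Barthe-Huet}, applies only under convexity of $A_i$ for $i \in I$. The concavity hypothesis on $\Phi^{-1}\circ f_i$ provides exactly this: for each rational $q_i$, the superlevel set $\{f_i > q_i\} = \{\Phi^{-1}\circ f_i > \Phi^{-1}(q_i)\}$ is the superlevel set of a concave function and hence convex whenever $i \in I$. This pointwise convexity is uniform in the threshold $q_i$, which is what will allow us to absorb the entire countable union argument.

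First, I would establish the countable-union representation mirroring \eqref{eq: description of superlevel sets as countable union}:
\begin{align*}
    \{\square_{\mathcal{M}_\Phi^\lambda} f > s\}
        = \bigcup_{q \in S_{\mathcal{M}_\Phi^\lambda}(s)}
            \lambda_1 \{f_1 > q_1\} + \cdots + \lambda_n \{f_n > q_n\},
\end{align*}
where $S_{\mathcal{M}_\Phi^\lambda}(s) = \{q \in (\mathbb{Q}_+\cap(0,1))^n : \mathcal{M}_\Phi^\lambda(q) > s\}$, together with the analogous formula for $\square_{\mathcal{M}_\Phi^\lambda}f_*$ under $\eta$. Corollary \ref{cor: Set theoretic Barthe-Huet} can then be applied termwise, precisely because the convexity for $i \in I$ has been secured. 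Since the rearranged summands $\lambda_1\{f_1>q_1\}^* + \cdots + \lambda_n\{f_n>q_n\}^*$ are left half-lines in $\mathbb{R}$ and hence totally ordered by inclusion, the sup-over-$q$ step carries through exactly as in Theorem \ref{thm: Main Theorem}, yielding $\gamma_d\{\square_{\mathcal{M}_\Phi^\lambda}f > s\} \geq \gamma\{\square_{\mathcal{M}_\Phi^\lambda}f_* > s\}$. Layer-cake integration in $s$ then produces the first inequality.

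For the middle inequality, I would exploit that the problem has now been reduced to one dimension, where each $\{f_i^*>q_i\}$ is automatically a half-line, hence convex. The weighted Minkowski sum $\lambda_1\{f_1^*>q_1\}+\cdots+\lambda_n\{f_n^*>q_n\}$ is itself a half-line of explicitly computable Gaussian measure, and the Barthe-Huet-type bound in dimension one becomes essentially tautological from the one-dimensional Borell-Ehrhard inequality combined with the layer-cake formula. Equimeasurability of the rearrangement, $\gamma_d\{f_i > t\} = \gamma\{f_i^* > t\}$, delivers $\int f_i^* \, d\gamma = \int f_i \, d\gamma_d$, accounting for the final equality.

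The main obstacle is the conditional nature of the set-theoretic inequality: one must verify that the convexity required by Corollary \ref{cor: Set theoretic Barthe-Huet} is available uniformly across every rational $q$ in the union, and that the resulting set sums inherit the requisite universal measurability. This is where the argument genuinely departs from the clean set-to-function lifting of Theorem \ref{thm: Main Theorem}; in particular, because the indices $i \notin I$ impose no convexity constraint on $\{f_i>q_i\}$, one must be careful that the hypothesis on the $\lambda_j$ for $j \notin I$ is invoked exactly as in Corollary \ref{cor: Set theoretic Barthe-Huet}. A secondary technical point is checking the continuity behavior of $\mathcal{M}_\Phi^\lambda$ at the boundary of $(0,1)^n$, which is needed for the rational approximation step in the superlevel-set decomposition.
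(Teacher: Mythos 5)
Your handling of the first inequality and the final equality is essentially the paper's own argument: decompose $\{\square_{\mathcal{M}_\Phi^\lambda} f > s\}$ as a countable union over rational thresholds, note that concavity of $\Phi^{-1}\circ f_i$ makes $\{f_i>q_i\}$ convex for $i\in I$ so that Corollary \ref{cor: Set theoretic Barthe-Huet} applies termwise, use that the rearranged sums are nested half-lines to replace the union by a supremum, integrate by layer cake, and finish the equality by equimeasurability. That part is fine.

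The gap is in your middle inequality. You assert it is ``essentially tautological from the one-dimensional Borell-Ehrhard inequality combined with the layer-cake formula'' because the superlevel sets of $f_i^*$ are half-lines, hence convex. But the middle inequality is exactly Theorem \ref{thm:Barthe} applied to the rearranged functions, and its hypothesis for $i\in I$ is that $\Phi^{-1}\circ f_i^*$ be \emph{concave} --- strictly stronger than quasi-concavity (convex superlevel sets), which every half-line rearrangement has automatically. Quasi-concavity does not suffice: take $n=1$, $I=\{1\}$, $\lambda_1=\lambda>1$, and $g=\mathbbm{1}_{(-\infty,0)}+\tfrac12\mathbbm{1}_{[0,\infty)}$ on $\mathbb{R}$ (a decreasing function, equal to its own half-line rearrangement, but with $\Phi^{-1}\circ g$ not concave); then $\int \square_{\mathcal{M}_\Phi^\lambda} g\, d\gamma=\tfrac34$ while $\mathcal{M}_\Phi^\lambda\bigl(\int g\, d\gamma\bigr)=\Phi\bigl(\lambda\Phi^{-1}(\tfrac34)\bigr)>\tfrac34$, so the claimed inequality fails. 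Relatedly, the functional statement cannot be extracted from the set-theoretic Ehrhard inequality by layer cake alone, because its right-hand side is $\Phi$ of a weighted sum of $\Phi^{-1}$'s of the integrals, not an integral of a pointwise quantity. The missing ingredient --- which the paper supplies as Proposition \ref{prop: rearrangement preserves concavity}, proved via the Ehrhard set inequality and the total ordering of half-lines --- is that the half-space rearrangement preserves $\mathcal{M}_\Phi^t$-concavity, i.e. concavity of $\Phi^{-1}\circ f_i$ passes to $\Phi^{-1}\circ f_i^*$; only with that in hand may Theorem \ref{thm:Barthe} be invoked for $f_*$. Your proposal omits this step, and the reason you give for skipping it is not valid.
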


\begin{proof}
Once it is observed that $\Phi^{-1} \circ f_i$ concave ensures $\{ f_i > q_i\}$ is a convex set, so that one can apply Corollary \ref{cor: Set theoretic Barthe-Huet}, the first inequality can be derived following the proof of Theorem \ref{thm: Main Theorem}.  The equality is immediate as well, following from our definition of rearrangement.  Thus to prove the result we need only justify the second inequality, which follows from Theorem \ref{thm:Barthe} once we know that the concavity of $\Phi^{-1} \circ f_i$ implies the concavity of $\Phi^{-1} \circ f_i^*$ as well.  For this, we prove a general result below.
\end{proof}

\begin{defn} \normalfont
    For a fixed $t \in (0,1)$ and a convex set $K$ we will call $f: K \to \mathbb{R}$, {\it $\Psi_t$-concave} when there exists a continuous coordinate increasing function $\Psi_t$ 
    such that
        \begin{align*}
            f((1-t)x_1 + t x_2) \geq \Psi_t(f(x_1),f(x_2)).
        \end{align*}
\end{defn}

Notice that the concavity of $\Phi^{-1} \circ f$ is equivalent to the statement that $f$ is $\Psi_t$-concave with $\Psi_t(u_1,u_2) = \mathcal{M}_\Phi^t(u_1, u_2) = \Phi((1-t) \Phi^{-1}(u_1) + t \Phi^{-1}(u_2))$ for $t \in (0,1)$.
\begin{prop} \label{prop: rearrangement preserves concavity}
	Suppose that $f,g,h$ are Borel functions on a space $(M,\mu)$ satisfying
	\begin{align} \label{eq: Psi-concave three functions}
	    h((1-t)x+ty) \geq \Psi_t(f(x),g(y))
	\end{align}
	for $x,y \in M$,
	 and that $*$ is a rearrangement from $(M,\mu)$ to a space $(N,\alpha)$ satisfying 
	\begin{align}  \label{eq: Brunn-Minkowski-esque assumption}
	    \mu((1-t) A + t B) \geq \alpha((1-t) A^* + t B^*).
	\end{align}
	Additionally assume that the space of rearranged sets has a total ordering that respects Minkowski summation in the sense that $(1-t)A^*+ t B^*$ and $C^*$ satisfy either
	\begin{align} \label{eq: Minkowski summation ordering}
	    (1-t) A^* + t B^* \subseteq C^* \mbox{ or } (1-t) A^* + t B^* \supseteq C^*
	\end{align}
then 
    \begin{align}
        h^*((1-t)x + t y) \geq \Psi_t(f^*(x), g^*(y))
    \end{align}
    holds for $x,y \in N$.
\end{prop}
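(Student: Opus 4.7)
The plan is to work through super-level sets, following the template of the proof of Theorem \ref{thm: Main Theorem}. The $\Psi_t$-concavity \eqref{eq: Psi-concave three functions} together with the continuity and coordinate monotonicity of $\Psi_t$ yields, for every rational pair $q_1, q_2$ with $\Psi_t(q_1, q_2) > \lambda$, the set inclusion
\[
    (1-t)\{f > q_1\} + t\{g > q_2\} \subseteq \{h > \lambda\}.
\]
Applying $\mu$, then the Brunn--Minkowski-type hypothesis \eqref{eq: Brunn-Minkowski-esque assumption}, equimeasurability of the rearrangement, and the identity $\{f > q\}^* = \{f^* > q\}$ from Proposition \ref{prop: characterization of rearrangement}, I obtain the chain
\[
    \alpha\{h^* > \lambda\} = \mu\{h > \lambda\} \geq \mu\bigl((1-t)\{f > q_1\} + t\{g > q_2\}\bigr) \geq \alpha\bigl((1-t)\{f^* > q_1\} + t\{g^* > q_2\}\bigr).
\]

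Next I would invoke the total ordering hypothesis \eqref{eq: Minkowski summation ordering}, applied to $A = \{f > q_1\}$, $B = \{g > q_2\}$, $C = \{h > \lambda\}$, so that $(1-t)\{f^* > q_1\} + t\{g^* > q_2\}$ and $\{h^* > \lambda\}$ are comparable by inclusion. The measure inequality just derived rules out the reverse direction, giving
\[
    (1-t)\{f^* > q_1\} + t\{g^* > q_2\} \subseteq \{h^* > \lambda\}.
\]
To finish, suppose $\Psi_t(f^*(x), g^*(y)) > \lambda$. By the continuity coordinate increasing property of $\Psi_t$, there exist rationals $q_1 < f^*(x)$, $q_2 < g^*(y)$ with $\Psi_t(q_1, q_2) > \lambda$, so $x \in \{f^* > q_1\}$ and $y \in \{g^* > q_2\}$, and the containment above places $(1-t)x + ty$ in $\{h^* > \lambda\}$. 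Taking the supremum over such $\lambda$ produces $h^*((1-t)x + ty) \geq \Psi_t(f^*(x), g^*(y))$.

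The main obstacle is the step converting the measure inequality into a set containment via the total ordering: one must rule out the case where $(1-t)\{f^* > q_1\} + t\{g^* > q_2\}$ strictly contains $\{h^* > \lambda\}$. In the examples of interest (half-space and convex-set rearrangements), the rearranged sets and their Minkowski combinations lie in a one-parameter family nested by inclusion, so equality of measure forces equality of sets and the direction of inclusion is dictated by the measure comparison. It is precisely this rigidity, implicitly built into assumption \eqref{eq: Minkowski summation ordering}, that needs to be used, and isolating its role cleanly under the abstract hypotheses is the delicate point of the argument.
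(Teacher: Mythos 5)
Your proof is correct and follows essentially the same route as the paper's: encode the $\Psi_t$-concavity as containments of Minkowski sums of super-level sets, pass to a measure comparison via the Brunn--Minkowski-type hypothesis \eqref{eq: Brunn-Minkowski-esque assumption} and equimeasurability, convert back to a set containment using the total ordering \eqref{eq: Minkowski summation ordering}, and recover the functional inequality from the containments by continuity and coordinate monotonicity of $\Psi_t$. The ``delicate point'' you flag at the end is treated in the paper by exactly the same appeal to \eqref{eq: Minkowski summation ordering} (it reduces the containment to the measure inequality without further comment), so your argument matches the paper's, and is if anything more explicit about that step.
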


Note that Theorem \ref{thm:Barthe extension} follows from the proposition by taking $f = g =h$ and $\Psi_t = \mathcal{M}_\Phi^t$.  Indeed, since the half-line rearrangement satisfies \eqref{eq: Minkowski summation ordering}, as half-lines are stable under convex combination gives that $f^*$ to be $\mathcal{M}_\Phi^t$-concave if $f$ is. 

In analyzing the proof of Theorem \ref{thm:Barthe extension}, it presents an apparent loosening of the hypothesis requiring only that $f_i$ is quasi-concave and $f_i^*$ is $\mathcal{M}_\Phi^t$-concave.

\begin{proof}
Observe that inequality \eqref{eq: Psi-concave three functions} can be equivalently stated as $\lambda_i \in \mathbb{R}$ implies
\begin{align} \label{eq: set theoretic concavity}
	(1-t) \{ f > \lambda_1\} + t \{ g > \lambda_2\} \subseteq \{ h > \Psi_t(\lambda_1, \lambda_2) \}.
\end{align}
which can be easily verified using our assumptions of continuity and monotonicity.  Indeed, if \eqref{eq: Psi-concave three functions} holds, then for $z = (1-t)x + ty$ for $x \in \{ f > \lambda_1\}$ and $y \in \{ g > \lambda_2 \}$ we have $h(z) \geq \Psi_t(f(x),g(y)) > \Psi_t(\lambda_1, \lambda_2)$.  For the converse, given $x,y$ take $\lambda_1 < f(x)$ and $\lambda_2 < g(y)$, then $z = (1-t)x + t y \in (1-t) \{ f > \lambda_1\} + t \{ g > \lambda_2\}$.  By \eqref{eq: set theoretic concavity}, $h(z) > \Psi_t(f(x),g(y))$, and by the continuity assumption on $\Psi_t$, $ \Psi_t(f(x),g(y)) = \sup_\lambda \Psi_t(\lambda_1, \lambda_2) \leq h(z)$.
	Thus we will prove $(1-t) \{ f^* > \lambda_1\} + t \{ g^* > \lambda_2\} \subseteq \{ h^* > \Psi_t(\lambda_1, \lambda_2) \}$, or equivalently 
	\begin{align*}
		(1-t) \{ f > \lambda_1\}^* + t \{ g > \lambda_2\}^* \subseteq \{ h > \Psi_t(\lambda_1, \lambda_2) \}^*.
	\end{align*}
By \eqref{eq: Minkowski summation ordering}, it is enough to show
	\begin{align*}
		\alpha((1-t) \{ f > \lambda_1\}^* + t \{ g > \lambda_2\}^*) \leq \alpha(\{ h > \Psi_t(\lambda_1, \lambda_2) \}^*).
	\end{align*}
	
	By our assumptions \eqref{eq: Brunn-Minkowski-esque assumption} and \eqref{eq: set theoretic concavity},
	\begin{align*}
		\alpha((1-t) \{ f > \lambda_1\}^* + t \{ g > \lambda_2\}^*) &\leq \mu((1-t) \{ f > \lambda_1\} + t \{ g > \lambda_2\})
		    \\
		        &\leq
		            \mu(\{ h > \Psi_t(\lambda_1, \lambda_2)\}).
	\end{align*}
	Our result follows since
	\begin{align*}
		\mu(\{ h > \Psi_t(\lambda_1, \lambda_2) \}) = \alpha(\{ h > \Psi_t(\lambda_1, \lambda_2) \}^*).
	\end{align*}
\end{proof}

Let us also point out the corollary obtained by taking $f = g= h$, as it is of interest independent of the application to Theorem \ref{thm:Barthe extension}.

\begin{cor}
	If $f:\mathbb{R}^d \to [0,\infty)$ is $\Psi_t$-concave, and $*$ implies $f^*$ is as well.
\end{cor}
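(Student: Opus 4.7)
The plan is simply to specialize Proposition~\ref{prop: rearrangement preserves concavity} to the single-function case $f=g=h$. The hypothesis of $\Psi_t$-concavity of $f$ reads $f((1-t)x+ty) \geq \Psi_t(f(x),f(y))$, which is exactly condition \eqref{eq: Psi-concave three functions} with $f=g=h$. The proposition's conclusion then gives $f^*((1-t)x+ty) \geq \Psi_t(f^*(x),f^*(y))$, which is precisely the $\Psi_t$-concavity of $f^*$. Hence the proof is a one-line invocation of Proposition~\ref{prop: rearrangement preserves concavity}.

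The only thing to check is that the rearrangement $*$ satisfies the two side hypotheses of Proposition~\ref{prop: rearrangement preserves concavity}, namely the Brunn-Minkowski-type dominance \eqref{eq: Brunn-Minkowski-esque assumption} and the total-ordering condition \eqref{eq: Minkowski summation ordering} on Minkowski sums of rearranged sets. For the $\mathbb{R}^d$ statement of the corollary, the natural choice is the rearrangement $*_K$ of Proposition~\ref{prop: BMI rearrangement example}, where $K$ is an open convex set with $0$ in its closure. In that case, Brunn-Minkowski supplies \eqref{eq: Brunn-Minkowski-esque assumption}, and since every rearranged set is of the form $\rho K$, a Minkowski combination $(1-t)(\rho_1 K) + t(\rho_2 K)$ is again a dilate of $K$ by the convexity of $K$, so the collection of all rearranged sets together with their Minkowski combinations is linearly ordered by inclusion, yielding \eqref{eq: Minkowski summation ordering}.

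There is no real obstacle: the substantive work has already been done in Proposition~\ref{prop: rearrangement preserves concavity}, and the corollary is a direct specialization. The only care needed is in naming a rearrangement for which \eqref{eq: Brunn-Minkowski-esque assumption} and \eqref{eq: Minkowski summation ordering} both hold, and for the Euclidean $*_K$ construction this verification is immediate as sketched above.
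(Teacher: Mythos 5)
Your proof is correct and matches the paper's own treatment: the corollary is stated there precisely as the specialization $f=g=h$ of Proposition \ref{prop: rearrangement preserves concavity}, which is exactly your one-line invocation. Your additional check that the convex-set rearrangement $*_K$ (with $0$ in the closure of $K$) satisfies \eqref{eq: Brunn-Minkowski-esque assumption} via Brunn--Minkowski and \eqref{eq: Minkowski summation ordering} via nested dilates is the same verification the paper relies on implicitly, so nothing further is needed.
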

It follows immediately that the class of $d$-dimensional $s$-concave measures is stable under (convex set) rearrangement, see \cite{BM15, Bor74} for background, and \cite{li2019further,li2018further} for recent connections connections between $s$-concave measures and information theory.

Observe that Proposition \ref{prop: rearrangement preserves concavity} gives another proof of Theorem \ref{thm: PLI rearrangement}\eqref{eq: qualitative PLI}.  Indeed, since\\ $f\square g ((1-t)x+ty) \geq f^{1-t}(x) g^t(y)$ holds for all $x,y$, $(f\square g)^*((1-t)x+ty)) \geq (f^*)^{1-t}(x)(g^*)^t(y)$ holds as well.  This implies $(f \square g)^* \geq f^* \square g^* $ and hence $\int f \square g = \int (f\square g)^* \geq \int f^* \square g^*$.

\subsection{Polar Pr\'ekopa-Leindler}
For fixed $t,\lambda \in (0,1)$, define $\mathcal{M}: [0,\infty)^2 \to [0,\infty)$ by
\begin{align*}
    \mathcal{M}(u,v) 
        &=
            \min \left\{ u^{\frac{1-t}{1 - \lambda}}, v^{\frac t \lambda} \right\},
\end{align*}
and for $x,y \in \mathbb{R}^d$ define $m(x,y) = (1-t)x+ty$ so that
\[
    f \square_{\mathcal{M}} g (z) = \sup_{m(x,y) = z} \min \left\{ f(x)^{\frac{1-t}{1 - \lambda}}, g(y)^{\frac t \lambda} \right\}.
\] 
We can state the recent polar  analog of Pr\'ekopa-Leindler due to Artstein-Avidan, Florentin, and Segal.
\begin{thm}[Artstein-Avidan, Florentin, Segal \cite{artstein2017polar}] \label{thm: AFS Polar Prekopa}
    For $f,g: \mathbb{R}^d \to [0,\infty)$ Borel, and $\mu$ log-concave
    \begin{align*}
        \int f \square_{\mathcal{M}} g(x) d\mu(x)  \geq \mathcal{M}_{-1}^\lambda \left( \int f(x) d\mu(x), \int g(x) d\mu(x) \right).
    \end{align*}
\end{thm}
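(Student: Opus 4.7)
The plan is to mirror the polarity-based approach of Artstein-Avidan, Florentin, and Segal, reducing the claim to an application of the Borell-Brascamp-Lieb inequality (Theorem \ref{thm:Borell-Brascamp-Lieb}) via a one-dimensional lift that matches the $\min$ structure of $\mathcal{M}$ to the harmonic mean $\mathcal{M}_{-1}^\lambda$ on the right-hand side. Since $\mu$ is log-concave, write $d\mu(x) = e^{-V(x)} dx$ with $V: \mathbb{R}^d \to \mathbb{R}$ convex, and set $A = \int f \, d\mu$ and $B = \int g \, d\mu$.

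First, I would introduce subgraph lifts $\tilde F, \tilde G$ on $\mathbb{R}^d \times (0, \infty)$, of the form
\begin{align*}
\tilde F(x, s) = (1-\lambda) s^{-\lambda} \mathbbm{1}_{\{s \leq f(x)^{1/(1-\lambda)}\}}, \quad \tilde G(y, s) = \lambda s^{\lambda - 1} \mathbbm{1}_{\{s \leq g(y)^{1/\lambda}\}},
\end{align*}
chosen so that the $s$-integration recovers $f$ and $g$ pointwise, via $\int_0^{f(x)^{1/(1-\lambda)}} (1-\lambda) s^{-\lambda} ds = f(x)$ and the analogous identity for $\tilde G$. In particular $\int \tilde F \, d\mu \, ds = A$ and $\int \tilde G \, d\mu \, ds = B$.

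Second, I would verify that for $(x_1, s_1) \in \supp \tilde F$ and $(x_2, s_2) \in \supp \tilde G$, the constraints $s_1 \leq f(x_1)^{1/(1-\lambda)}$ and $s_2 \leq g(x_2)^{1/\lambda}$ yield $s_1^{1-t} \leq f(x_1)^{(1-t)/(1-\lambda)}$ and $s_2^{t} \leq g(x_2)^{t/\lambda}$, so that at $z = (1-t)x_1 + t x_2$ one has $f \square_{\mathcal{M}} g(z) \geq \min\{s_1^{1-t}, s_2^{t}\}$. This extracts a $\min$-type pointwise lower bound on $f \square_{\mathcal{M}} g$ from the lifted subgraphs. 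I would then apply a $p$-mean Borell-Brascamp-Lieb inequality in $\mathbb{R}^{d+1}$, using the log-concavity of $e^{-V(x)}$ together with the log-affine character of the $s^{-\lambda}$ and $s^{\lambda-1}$ weights (made manifest after the substitution $s = e^{u}$). Integrating out the $s$-variable and invoking a Beta-type identity in the resulting one-dimensional integral produces the harmonic mean $\mathcal{M}_{-1}^\lambda(A, B)$ on the right-hand side, rather than the weaker geometric mean that would arise from a direct application of Pr\'ekopa's theorem on the superlevel sets of $f \square_{\mathcal{M}} g$.

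The main obstacle is the delicate interplay between the Minkowski mixing parameter $t$ in the $x$-variable and the weighting parameter $\lambda$ in the $s$-variable, which must be coupled so that the auxiliary $s$-integration precisely converts the geometric mean arising from the log-concavity of $\mu$ into the harmonic mean dictated by the polarity structure of $\mathcal{M}$. An equivalent route is to follow the original polarity argument of Artstein-Avidan, Florentin, and Segal, in which the $\min$ in $\mathcal{M}$ emerges as the Legendre-type dual of the reciprocal averaging defining $\mathcal{M}_{-1}^\lambda$; in either route the concavity properties of the lift must match exactly with the Beta-integration structure, failing which one only obtains the (strictly weaker) bound $\int f\square_{\mathcal{M}} g \, d\mu \geq \mathcal{M}_0^\lambda(A, B)$.
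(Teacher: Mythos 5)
The paper does not prove Theorem \ref{thm: AFS Polar Prekopa} at all: it is quoted from \cite{artstein2017polar} and used as a black box, the paper's own contribution being the rearrangement sharpening that follows via Theorem \ref{thm: Main Theorem}. So your proposal must stand on its own, and as written it does not. The only steps you actually verify are elementary: the identities $\int_0^{f(x)^{1/(1-\lambda)}}(1-\lambda)s^{-\lambda}\,ds = f(x)$, $\int_0^{g(y)^{1/\lambda}}\lambda s^{\lambda-1}\,ds = g(y)$, and the pointwise bound $f\square_{\mathcal{M}}g((1-t)x_1+tx_2)\geq \min\{s_1^{1-t},s_2^{t}\}$ on the supports of the lifts. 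Everything that would make the argument a proof is deferred: you never define the third function $\tilde H$ on $\mathbb{R}^{d+1}$ to which Borell--Brascamp--Lieb (Theorem \ref{thm:Borell-Brascamp-Lieb}) is to be applied, never verify a hypothesis of the form $\tilde H((1-t)X+tY)\geq \mathcal{M}_p^{t}\bigl(\tilde F(X),\tilde G(Y)\bigr)$ for an admissible exponent $p\geq -1/(d+1)$ (the $\min$-type bound you do have is only the $p=-\infty$ hypothesis, from which BBL gives nothing), and never carry out the ``$s$-integration and Beta-type identity'' that is supposed to produce the harmonic mean. There is also a structural mismatch you would have to resolve explicitly: BBL applied along the convex combination with weights $(1-t,t)$ can only output a $t$-weighted mean $\mathcal{M}_{p/((d+1)p+1)}^{t}$ of the lifted integrals, whereas the target is the $\lambda$-weighted, dimension-free harmonic mean $\mathcal{M}_{-1}^{\lambda}$; converting the $t$-weights into $\lambda$-weights through the exponents $(1-t)/(1-\lambda)$ and $t/\lambda$ is precisely the content of the theorem, and it is the step you yourself label ``the main obstacle'' and leave unproved.

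A smaller but telling point: your closing remark that a failed matching ``only'' yields $\int f\square_{\mathcal{M}}g\,d\mu\geq \mathcal{M}_0^{\lambda}(A,B)$ is backwards. Since $\mathcal{M}_0^{\lambda}\geq \mathcal{M}_{-1}^{\lambda}$ by the power-mean inequality, a geometric-mean lower bound would be a \emph{stronger} conclusion, not a weaker fallback; it is in fact false in general under the $\min$ hypothesis with these exponents, which is exactly why the polar inequality asserts only the harmonic mean. As it stands, your text is a plausible plan (and the lifting idea is not unreasonable), but the decisive computation that distinguishes the polar Pr\'ekopa--Leindler inequality from a routine level-set or BBL argument is missing, so there is a genuine gap.
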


In the case that $\mu$ is Lebesgue (with $*$ rearrangement to a convex set) or Gaussian (with $*$ rearrangement to a half-space), and $\eta(x,y) = (1-t) x + ty$ this can be sharpened to the following.
\begin{thm}
       For $f,g: \mathbb{R}^d \to [0,\infty)$ Borel, and $\mu$ either Gaussian, with $*$ the half space rearrangement, or Lebesgue with $*$ a convex set rearrangement, then
    \begin{align*}
        \int f \square_{\mathcal{M}} g d\mu  
            &\geq 
                \int f^* \square_{\mathcal{M}} g^* d\mu
                    \\
            &\geq
                \mathcal{M}_{-1}^\lambda \left( \int f d\mu, \int g d\mu \right).
    \end{align*}
\end{thm}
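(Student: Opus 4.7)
The plan is to mirror the sharpening of Borell--Brascamp--Lieb displayed just above: the first inequality will follow directly from Theorem \ref{thm: Main Theorem}, while the second will come from applying the polar Pr\'ekopa--Leindler inequality (Theorem \ref{thm: AFS Polar Prekopa}) to the rearrangements $f^*, g^*$ and invoking equimeasurability.

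The first step is to check that
\[
\mathcal{M}(u,v) = \min\left\{ u^{(1-t)/(1-\lambda)},\, v^{t/\lambda} \right\}
\]
is a continuous coordinate increasing function in the sense of Section \ref{sec: General rearrange}. Each entry of the min is continuous and strictly increasing in its own variable on $(0,\infty)$, and a brief case analysis on which term attains the minimum shows that a simultaneous strict increase in both coordinates forces a strict increase in $\mathcal{M}$; vanishing on the axes is immediate, and the supremum condition follows from continuity. With this in hand, I would invoke the set theoretic rearrangement inequality for $m(x,y) = \eta(x,y) = (1-t)x + ty$: in the Lebesgue case this is Brunn--Minkowski applied to convex-set rearrangements (Proposition \ref{prop: BMI rearrangement example} with $n=2$), and in the Gaussian case it is Borell--Ehrhard in the guise of Proposition \ref{prop: Gaussian rearrangement to one d} / Theorem \ref{thm: Borell Ehrhard}. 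Theorem \ref{thm: Main Theorem} applied with $\psi$ the identity then yields the first inequality. One bookkeeping point: in the Gaussian case the half-space rearrangement maps into $\mathbb{R}$, so $f^* \square_{\mathcal{M}} g^*$ lives on the line and the middle integral is tacitly against $\gamma_1$.

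For the second inequality, I would apply Theorem \ref{thm: AFS Polar Prekopa} to $f^*$ and $g^*$ on the target space of the rearrangement, which is $(\mathbb{R}^d,\mathrm{Leb})$ in the Lebesgue case and $(\mathbb{R},\gamma_1)$ in the Gaussian case. Both target measures are log-concave, so the hypothesis is satisfied, and equimeasurability ($\int f^*\, d\alpha = \int f\, d\mu$, and similarly for $g$) converts the resulting bound into $\mathcal{M}_{-1}^\lambda\bigl( \int f\, d\mu, \int g\, d\mu \bigr)$, as required.

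I do not anticipate a substantive obstacle: every ingredient is already established in the paper and the structure exactly parallels the Borell--Brascamp--Lieb sharpening. The only mildly delicate point is the dimension shift in the Gaussian case, where the polar Pr\'ekopa--Leindler inequality is being invoked on $\mathbb{R}$ (with respect to $\gamma_1$) rather than on $\mathbb{R}^d$; one should simply record that Theorem \ref{thm: AFS Polar Prekopa} holds in all dimensions, so this causes no issue.
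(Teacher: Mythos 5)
Your proposal is correct and follows essentially the same route as the paper: verify $\mathcal{M}$ is continuous coordinate increasing, invoke the set theoretic rearrangement inequality (Brunn--Minkowski for the Lebesgue/convex-set case, Borell--Ehrhard for the Gaussian/half-space case) through Theorem \ref{thm: Main Theorem} for the first inequality, then apply Theorem \ref{thm: AFS Polar Prekopa} to $f^*,g^*$ with equimeasurability for the second. Your explicit remarks on the coordinate-increasing check and on the dimension shift to $(\mathbb{R},\gamma_1)$ in the Gaussian case only make explicit what the paper leaves implicit.
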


\begin{proof}
As we have seen, the map $(x,y) \mapsto (1-t)x + t y$ satisfies a set theoretic rearrangement inequality by Brunn-Minkowski with respect to Lebesgue measure and rearrangement to a convex set, and by Borell-Ehrhard with respect to Gaussian measure and rearrangement to a halfspace.  The map $\mathcal{M}(u,v) = \min \{ u ^{\frac{1-t}{1-\lambda}}, v^{\frac t \lambda} \}$ is clearly continuous and coordinate increasing for $\lambda, t \in (0,1)$.  Thus in both cases, Gaussian and Lebesgue, we can invoke Theorem \ref{thm: Main Theorem} to obtain the first inequality.  The second inequality is obtained from the application of Theorem \ref{thm: AFS Polar Prekopa} to $f^*$ and $g^*$, and the equimeasurability of rearrangements.  
\end{proof}

\section{Gaussian log-Sobolev inequality}\label{sec: Gaussian log-Sobolev}
For a probability measure $\mu$ define the entropy functional\footnote{Note that when $f = \frac{d\nu}{d\mu}$ is the density function of a probability measure $\nu$ with respect to $\mu$, $H_\mu(f)$ is the Kullback-Liebler divergence $D( \nu || \mu)$ or relative entropy \cite{CT91:book}.} for a non-negative $f$ by
\begin{equation*}
    H_\mu(f) = \int f \log  f d\mu - \int f d\mu \log \int f d \mu.
\end{equation*}

One formulation of the Gaussian log-Sobolev inequality is the following.
\begin{thm}[Gaussian log-Sobolev]
For positive smooth $f$, 
\begin{align*}
    H_{\gamma_d}(f) \leq \frac 1 2 \int \frac{| \nabla f |^2}{f} d \gamma_d.
\end{align*}
\end{thm}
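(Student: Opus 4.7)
The plan is to derive this from a Gaussian-weighted version of Pr\'ekopa-Leindler, in the spirit of the Bobkov-Ledoux derivation alluded to in the introduction. The key observation is that applying Theorem \ref{thm: PLI} to the densities $e^{u(x) - |x|^2/2}$, $e^{v(x) - |x|^2/2}$, and $e^{w(x) - |x|^2/2}$ and using strict convexity of the potential yields the \emph{Gaussian Pr\'ekopa-Leindler}: whenever measurable $u,v,w:\mathbb{R}^d \to \mathbb{R}$ satisfy
\begin{align*}
    w((1-t)x + ty) \;\geq\; (1-t) u(x) + t v(y) - \tfrac{t(1-t)}{2}|x-y|^2
\end{align*}
for all $x,y \in \mathbb{R}^d$ and some $t \in (0,1)$, one has $\int e^w d\gamma_d \geq \bigl(\int e^u d\gamma_d\bigr)^{1-t}\bigl(\int e^v d\gamma_d\bigr)^t$.

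The strategy is then to apply this inequality to test data built from $g := \log f$, and to extract the log-Sobolev inequality by differentiating in the interpolation parameter. Concretely, I would normalize $\int f d\gamma_d = 1$ (the inequality is homogeneous in $f$), set $v = g$ and $u = 0$, and take $w$ to be the optimal (largest) function satisfying the Pr\'ekopa hypothesis, namely the inf-convolution
\begin{align*}
    w(z) \;=\; \inf_{(1-t)x + ty = z} \Bigl\{ t g(y) - \tfrac{t(1-t)}{2}|x-y|^2 \Bigr\} \;=\; \inf_{y} \Bigl\{ t g(y) - \tfrac{t}{2(1-t)} |z-y|^2 \Bigr\}
\end{align*}
(after reparametrizing). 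The Gaussian PLI then gives $\int e^{w} d\gamma_d \geq 1$ for all $t \in (0,1)$, and the log-Sobolev inequality appears as the second-order Taylor expansion of this family at $t = 0$: the zeroth order term is $1$, the first-order term gives $\int g f d\gamma_d - \int g d\gamma_d$ style quantities (which cancel against the entropy produced by differentiating $e^w$), and collecting the remaining $O(t^2)$ contributions yields $H_{\gamma_d}(f) \leq \tfrac12 \int |\nabla g|^2 f\, d\gamma_d = \tfrac{1}{2} \int \frac{|\nabla f|^2}{f} d\gamma_d$ after using $|\nabla g|^2 f = |\nabla f|^2/f$.

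The main technical obstacle is the rigorous Taylor expansion: the inf-convolution $w$ need not be smooth even when $g$ is, and the minimizer in $y$ must be controlled uniformly in $t$. I would handle this by first truncating $g$ so that $g$ and its derivatives are bounded (for instance, replace $f$ by $\max(f, \varepsilon) \wedge N$ and smooth), in which case the minimizer $y_*(z,t)$ is unique for small $t$, lies close to $z$, and depends smoothly on $t$ by the implicit function theorem. A direct expansion then gives $w(z) = t g(z) - \tfrac{t^2}{2}|\nabla g(z)|^2 + o(t^2)$ pointwise, with uniform control sufficient to swap the expansion with integration via dominated convergence. Removing the truncation by monotone/dominated convergence (and verifying that the Fisher-information integral on the right-hand side is stable under this limit, which is standard for smooth positive $f$) completes the proof.
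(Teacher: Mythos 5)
Your overall plan is the paper's plan (tilt the Euclidean PLI by the Gaussian density to get the curved inequality of Theorem \ref{thm: Gaussian Prekopa-Leindler}, specialize to data built from a single $f$, and Taylor-expand in the interpolation parameter), but the execution breaks at the key step. First, the extremal choice of $w$ is wrong in direction: the hypothesis of the curved PLI is a \emph{lower} bound on $w$, so the useful optimal choice is the \emph{smallest} admissible function, which is the sup-convolution
\begin{align*}
    w(z) \;=\; \sup_{y}\Bigl\{ t\,g(y) - \tfrac{t}{2(1-t)}\,|z-y|^2 \Bigr\},
\end{align*}
not the inf-convolution you wrote. The infimum does not satisfy the hypothesis (it lies below the required bound; for bounded $g$ it is identically $-\infty$), so the Gaussian PLI yields nothing for it, and the minus sign in your claimed expansion $w = tg - \tfrac{t^2}{2}|\nabla g|^2 + o(t^2)$ is the signature of an inf-convolution that is simply not admissible here.

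Second, even after replacing inf by sup, the expansion at $t\to 0$ fails: the penalty $\tfrac{t}{2(1-t)}|z-y|^2$ is of the \emph{same} order in $t$ as $t\,g(y)$, so the supremum does not localize near $y=z$; the first-order coefficient is $\sup_y\{g(y)-\tfrac12|z-y|^2\}$, not $g(z)$. A sanity check shows the chain cannot work as stated: at first order your inequality $\int e^{w}\,d\gamma_d \ge 1$ would force $\int \log f\, d\gamma_d \ge 0$, contradicting Jensen (under $\int f\,d\gamma_d=1$) unless $f$ is constant. The correct limit is the other endpoint, $t \to 1$, equivalently $\lambda = (1-t)/t \to 0$: then the penalty $\tfrac{1}{2\lambda}|z-y|^2$ blows up, the extremizer localizes at $y\approx z$, the sup-convolution expands with a \emph{plus} sign as $f + \tfrac{\lambda}{2}\tfrac{|\nabla f|^2}{f} + o(\lambda)$, and the log-Sobolev inequality already drops out at first order --- no second-order bookkeeping or cancellation is needed. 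This is exactly the paper's route: define $Q_\lambda f(z)=\sup_{w'} f(z+w')e^{-|w'|^2/2\lambda}$, prove the integrated inequality $\|Q_\lambda f\|_1 \ge \|f\|_{1+\lambda}$ (Theorem \ref{thm: Integrated log-Sobolev}), and expand both sides at $\lambda = 0$, with the entropy arising from $\|f\|_{1+\lambda}$ on the right and the Fisher information from $\|Q_\lambda f\|_1$ on the left (they sit on opposite sides; they do not cancel). Your truncation/implicit-function-theorem remarks would then be the right tools to justify that first-order expansion, but as written the proposal's extremal function and its expansion are incorrect.
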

In this form the inequality is due to Gross \cite{Gro75}. Carlen \cite{Car91} showed it to be equivalent to the earlier information theoretic Blachman-Stam inequality \cite{Bla65,Sta59}. The Gaussian log-Sobolev inequality was shown to be a consequence of a strengthened PLI for strongly log-concave measures by Bobkov-Ledoux \cite{BL00}, and it is this perspective that we now develop to motivate the main result of this section, a rearrangement sharpening of an integrated Gaussian log-Sobolev inequality.   In this direction, let us recall that the PLI can be easily extended to the log-concave case.  
\begin{thm}[Log-concave PLI] \label{thm: logconcave PLI}
    For measure $\mu$ with density $\phi$ satisfying 
\begin{align*}
    \phi((1-t)x+ty) \geq \phi^{1-t}(x) \phi^t(y),
\end{align*}
the inequality for non-negative functions $u,v,w$
\begin{align*}
    u((1-t)x+ty) \geq v^{1-t}(x)w^t(y)
\end{align*}
implies
\begin{align} \label{eq: logconcave PLI conclusion}
    \int u d \mu  \geq \left( \int v d \mu \right)^{1-t} \left( \int w d \mu \right)^t.
\end{align}
\end{thm}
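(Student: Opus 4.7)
The natural approach is to reduce the log-concave case to the ordinary Prékopa–Leindler inequality (Theorem~\ref{thm: PLI}) by absorbing the density $\phi$ into each of the three functions. Define $\tilde{u} = u \phi$, $\tilde{v} = v \phi$, $\tilde{w} = w \phi$. Multiplying the pointwise hypothesis $u((1-t)x+ty) \geq v^{1-t}(x) w^t(y)$ by the log-concavity inequality $\phi((1-t)x+ty) \geq \phi^{1-t}(x) \phi^t(y)$ yields
\begin{align*}
    \tilde{u}((1-t)x+ty) \geq \tilde{v}^{1-t}(x)\, \tilde{w}^t(y) \qquad \text{for all } x,y \in \mathbb{R}^d.
\end{align*}

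Taking the supremum over pairs $(x,y)$ with $(1-t)x+ty = z$ gives $\tilde{u}(z) \geq \tilde{v}\,\square\, \tilde{w}(z)$ pointwise in $z$. Integrating against Lebesgue measure and invoking PLI applied to $\tilde{v}$ and $\tilde{w}$ produces
\begin{align*}
    \int u \, d\mu
        = \int \tilde{u}(z)\, dz
        \geq \int \tilde{v}\,\square\, \tilde{w}(z)\, dz
        \geq \left( \int \tilde{v}\, dz \right)^{1-t} \left( \int \tilde{w}\, dz \right)^t,
\end{align*}
which is exactly \eqref{eq: logconcave PLI conclusion} after rewriting the right-hand integrals as $\int v\, d\mu$ and $\int w\, d\mu$.

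There is no substantive obstacle: measurability of $\tilde{v}\,\square\,\tilde{w}$ is handled by the universal measurability discussion already given in the proof of Theorem~\ref{thm: PLI rearrangement} (the super-level sets are countable unions of Minkowski sums of Borel sets, hence analytic and universally measurable), so the integrals are well-defined and PLI applies directly. The only mild subtlety is bookkeeping when some of the integrals vanish or are infinite, which is handled by the usual conventions $0^\alpha = 0$ and a truncation argument if one wishes to avoid $\infty$ explicitly. The entire proof is essentially a one-line multiplicative trick, so the statement should be viewed as a corollary rather than a genuinely new result.
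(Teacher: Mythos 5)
Your proof is correct and is essentially the same argument as the paper's: multiply the hypothesis by the log-concavity of $\phi$ to get the pointwise inequality for $\tilde u = u\phi$, $\tilde v = v\phi$, $\tilde w = w\phi$, and then apply the ordinary Pr\'ekopa--Leindler inequality, with the conclusion rewriting immediately as \eqref{eq: logconcave PLI conclusion}. The detour through $\tilde v \,\square\, \tilde w$ and the measurability remark are harmless but not needed beyond what the direct application of Theorem~\ref{thm: PLI} already gives.
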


\begin{proof}
Observing that the functions $\tilde u(z) = u(z)\phi(z)$, $\tilde v (z) = v(z) \phi(z)$, and $\tilde w(z) = w(z) \phi(z)$ satisfy 
\begin{align*}
    \tilde u ((1-t)x+ty) \geq \tilde v^{1-t}(x) \tilde w^{t}(y)
\end{align*}
so that applying the ordinary PLI, we have
\begin{align*}
    \int \tilde u(z) dz \geq \left( \int \tilde v(z) dz \right)^{1-t} \left( \int \tilde w(z) dz \right)^{t},
\end{align*}
which is exactly \eqref{eq: logconcave PLI conclusion}.
\end{proof}

The log-concave case corresponds to the case when the measure is given by a density corresponding to a convex potential, that is $\phi(x) = e^{-V(x)}$ when $V$ is convex.  For the Gaussian measure something stronger is true, $V$ in this case satisfies 
\begin{align} \label{eq: strong convexity}
    V((1-t)x+ty) \leq (1-t)V(x) + t V(y) - t(1-t) |x-y|^2/2.
\end{align}
Note that in the case that $V$ is smooth, log-concavity is exactly $V'' \geq 0_d$ in the sense of positive semi-definite matrices, while \eqref{eq: strong convexity} is $V'' \geq I_d$.  Under these assumptions, Theorem \ref{thm: logconcave PLI} admits the following strengthening.

\begin{thm}[Curved Pr\'ekopa-Leindler] \label{thm: Gaussian Prekopa-Leindler}
For $t \in (0,1)$, $\mu$ strongly log-concave in the sense of \eqref{eq: strong convexity}, and $u,v,w: \mathbb{R}^d \to [0,\infty)$ satisfying
\begin{align*}
    u((1-t)x + ty) \geq e^{-t(1-t) |x-y|^2/2} v^{1-t}(x)w^t(y),
\end{align*}
for all $x,y \in \mathbb{R}^d$, then
\begin{align*}
    \int u d \mu \geq \left(\int v \ d \mu \right)^{1-t} \left( \int w d \mu \right)^{t}.
\end{align*}
\end{thm}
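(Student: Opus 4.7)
The plan is to mimic the proof of Theorem \ref{thm: logconcave PLI}, multiplying the functions $u,v,w$ by the density $\phi = e^{-V}$ of $\mu$ and then applying the ordinary Pr\'ekopa-Leindler inequality. The only twist is that strong log-concavity \eqref{eq: strong convexity} produces an extra Gaussian factor, which is precisely what the hypothesis on $u,v,w$ is tailored to absorb.

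Concretely, define $\tilde u(z) = u(z)\phi(z)$, $\tilde v(z) = v(z)\phi(z)$, $\tilde w(z)= w(z) \phi(z)$. Using the strong convexity inequality \eqref{eq: strong convexity} in the form
\begin{align*}
    \phi((1-t)x+ty) = e^{-V((1-t)x+ty)} \geq e^{t(1-t)|x-y|^2/2}\, \phi^{1-t}(x)\, \phi^{t}(y),
\end{align*}
and multiplying by the assumed inequality
\begin{align*}
    u((1-t)x+ty) \geq e^{-t(1-t)|x-y|^2/2}\, v^{1-t}(x)\, w^{t}(y),
\end{align*}
the two Gaussian exponentials cancel exactly and we obtain
\begin{align*}
    \tilde u((1-t)x+ty) \geq \tilde v^{1-t}(x)\, \tilde w^{t}(y) \qquad \text{for all } x,y \in \mathbb{R}^d.
\end{align*}

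With this pointwise inequality in hand, the ordinary PLI (Theorem \ref{thm: PLI}) applied to $\tilde u, \tilde v, \tilde w$ against Lebesgue measure yields
\begin{align*}
    \int u \, d\mu = \int \tilde u(z) \, dz
    \geq \left( \int \tilde v(z)\, dz \right)^{1-t} \left( \int \tilde w(z)\, dz \right)^{t}
    = \left( \int v \, d\mu \right)^{1-t} \left( \int w \, d\mu \right)^{t},
\end{align*}
which is the desired conclusion. There is no real obstacle here; the crux is merely the algebraic observation that the hypothesis on $u,v,w$ carries exactly the Gaussian deficit $e^{-t(1-t)|x-y|^2/2}$ needed to offset the strong convexity surplus $e^{t(1-t)|x-y|^2/2}$ in $\phi$. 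One should briefly justify that we may assume $v$ and $w$ are nontrivially integrable with respect to $\mu$ (else the right-hand side vanishes), and that the composite products $\tilde u, \tilde v, \tilde w$ are Borel measurable so that the PLI applies verbatim.
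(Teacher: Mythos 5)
Your proof is correct and follows essentially the same route as the paper: the paper's proof is exactly the tilting trick of Theorem \ref{thm: logconcave PLI}, applying the Euclidean PLI to $\tilde u = u\phi$, $\tilde v = v\phi$, $\tilde w = w\phi$, with the strong convexity surplus $e^{t(1-t)|x-y|^2/2}$ cancelling the Gaussian deficit in the hypothesis. You have merely written out the cancellation that the paper leaves implicit, so there is nothing to add.
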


\begin{proof}
The proof follows again from applying the Euclidean PLI to $\tilde u(z) = u(z)\phi(z)$, $\tilde v (z) = v(z) \phi(z)$.
\end{proof}


Following arguments of Bobkov-Ledoux \cite{BL00} we pursue a specialization of Theorem \ref{thm: Gaussian Prekopa-Leindler} to a single function, revealing a log-Sobolev inequality as a consequence of a strengthened PLI.  For a fixed $t \in (0,1)$, and a strongly log-concave probability measure $\mu$, and $f$, take $w = f^{\frac 1 t}$, $v =1$, then for any $u$, satisfying
\begin{align*}
    u((1-t)x + t y ) \geq e^{-t(1-t) |x-y|^2/2} f(y)
\end{align*}
we have from Theorem \ref{thm: Gaussian Prekopa-Leindler}
\begin{align*}
    \int u \hspace{1mm} d\mu \geq \left( \int f^{\frac 1 t} d \mu \right)^t.
\end{align*}
With the interest of determining the optimal such $u$ achievable through the methods of PLI, it is natural to consider
\begin{align*}
    u(z) = \sup_{ \{(x,y):(1-t)x + ty = z \}}  e^{-t(1-t) |x-y|^2/2} f(y).
\end{align*}
Writing $\lambda = \frac{1-t}{t}$, note that the constraint on $x,y$ is equivalent to $y = z + \lambda (z-x)$, so that the  $u(z)$ above can be expressed as $Q_\lambda f(z)$ in the following definition.
\begin{defn}
For $\lambda \in (0,\infty)$ and $f$ non-negative and Borel measurable, define
\begin{align*}
    Q_\lambda f(z) &= \sup_w f(z + \lambda w) e^{-\lambda |w|^2/2} \\
    &=
    \sup_w f(z + w) e^{- |w|^2/2 \lambda}.
\end{align*}
\end{defn}
Writing $\| f \|_p = \left( \int |f|^p d \mu \right)^{\frac 1 p}$ we can collect the above as the following.

\begin{thm}[Integrated log-Sobolev] \label{thm: Integrated log-Sobolev}
    For $\mu$ a strongly log-concave probability measure, $\lambda \in (0,\infty)$ and $f$ non-negative and Borel measurable,
    \[
        \| Q_\lambda f \|_1 \geq \| f \|_{1+\lambda}.
    \]
\end{thm}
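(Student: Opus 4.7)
The plan is to derive this bound as a direct specialization of the Curved Prékopa--Leindler inequality (Theorem \ref{thm: Gaussian Prekopa-Leindler}), exactly as foreshadowed in the paragraph preceding the statement. I will fix $t \in (0,1)$ related to $\lambda$ by $\lambda = (1-t)/t$, equivalently $t = 1/(1+\lambda)$ and $1-t = \lambda/(1+\lambda)$, and then pick the triple $(u,v,w)$ appearing in Curved PLI to be
\[
    u = Q_\lambda f, \qquad v \equiv 1, \qquad w = f^{1/t} = f^{1+\lambda}.
\]

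The main verification is that these choices satisfy the pointwise hypothesis of Theorem \ref{thm: Gaussian Prekopa-Leindler}, namely
\[
    u((1-t)x + ty) \geq e^{-t(1-t)|x-y|^2/2}\, v^{1-t}(x) w^t(y).
\]
Given $z = (1-t)x + ty$, the constraint can be rewritten as $y = z + \lambda(z-x)$, and a short calculation (using $1/t = 1+\lambda$) shows $|x-y|^2 = (1+\lambda)^2|z-x|^2$, so that $t(1-t)|x-y|^2/2 = \lambda |z-x|^2/2$. Setting $w = z-x$, the right-hand side becomes $e^{-\lambda |w|^2/2} f(z+\lambda w)$, and taking the supremum over $w$ yields exactly $Q_\lambda f(z)$ in its first definitional form. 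This is precisely the identification the paper has already made, so the pointwise hypothesis is immediate.

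Applying Curved PLI with these choices now gives
\[
    \int Q_\lambda f \, d\mu \;\geq\; \Bigl(\int 1 \, d\mu\Bigr)^{1-t} \Bigl(\int f^{1+\lambda}\, d\mu\Bigr)^{t} \;=\; \Bigl(\int f^{1+\lambda}\, d\mu\Bigr)^{1/(1+\lambda)} \;=\; \|f\|_{1+\lambda},
\]
where we used that $\mu$ is a probability measure and that $t = 1/(1+\lambda)$. The left side is $\|Q_\lambda f\|_1$ by definition, which is the claimed inequality.

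The only genuine technical point worth flagging is measurability of $Q_\lambda f$, so that $\int Q_\lambda f \, d\mu$ is well-defined. Since $Q_\lambda f$ is an upper envelope of translates of $f$ weighted by a continuous Gaussian kernel, the super-level sets can be written, as in the proof of Theorem \ref{thm: PLI rearrangement}, as a countable union indexed by rationals of Minkowski-type sums of Borel sets with Euclidean balls; this yields universal measurability and justifies the integral. No further obstacle is expected, since Curved PLI does all of the real work.
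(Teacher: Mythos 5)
Your proposal is correct and is essentially identical to the paper's own derivation: the paper obtains the theorem precisely by specializing the curved Pr\'ekopa--Leindler inequality with $v \equiv 1$, $w = f^{1/t}$, identifying $\lambda = (1-t)/t$ and recognizing the optimal $u$ as $Q_\lambda f$ via the same change of variables $y = z + \lambda(z-x)$. The only cosmetic issue is that you reuse the letter $w$ both for the function $f^{1+\lambda}$ and for the variable $z-x$, which is harmless but worth cleaning up.
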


The log-Sobolev inequality for strongly log-concave probability measures can be recovered as a corollary.

\begin{cor}[Log-Sobolev inequality]
For $\mu$ strongly log-concave probability measure, and $f$ a positive smooth function
\[
    H_\mu(f) \leq \frac 1 2 \int \frac{| \nabla f |^2}{f} d\mu 
\]
\end{cor}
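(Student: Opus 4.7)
The plan is to derive the log-Sobolev inequality by linearizing Theorem~\ref{thm: Integrated log-Sobolev} at $\lambda = 0^+$. Writing $Q_\lambda f(z) = \sup_w f(z+\lambda w) e^{-\lambda |w|^2/2}$, the Gaussian kernel tends to $1$ as $\lambda \downarrow 0$ and the maximizer collapses to $w^* = 0$, so $Q_0 f = f$ and $\|Q_\lambda f\|_1 \to \|f\|_1$. Similarly $\|f\|_{1+\lambda} \to \|f\|_1$. Both sides of $\|Q_\lambda f\|_1 \geq \|f\|_{1+\lambda}$ therefore agree at $\lambda = 0$, and the inequality survives after dividing by $\lambda$ and letting $\lambda \downarrow 0$, giving the comparison of right derivatives
\[
\left.\frac{d}{d\lambda}\|Q_\lambda f\|_1\right|_{\lambda=0^+} \geq \left.\frac{d}{d\lambda}\|f\|_{1+\lambda}\right|_{\lambda = 0^+},
\]
and the log-Sobolev inequality reduces to computing each side.

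For the right-hand derivative, I would compute $\frac{d}{d\lambda}\log \|f\|_{1+\lambda}$ from the identity $\log \|f\|_{1+\lambda} = (1+\lambda)^{-1}\log \int f^{1+\lambda}\, d\mu$, obtaining at $\lambda = 0$ the value $\tfrac{1}{\int f d\mu}\bigl[\int f \log f\, d\mu - \int f\, d\mu \log \int f\, d\mu\bigr] = H_\mu(f)/\|f\|_1$. The chain rule then gives $\frac{d}{d\lambda}\|f\|_{1+\lambda}|_{\lambda=0^+} = H_\mu(f)$. For the left-hand derivative, I would Taylor-expand the supremand:
\[
f(z+\lambda w)\,e^{-\lambda|w|^2/2} = f(z) + \lambda\bigl[\langle \nabla f(z), w\rangle - f(z)|w|^2/2\bigr] + O(\lambda^2).
\]
The order-$\lambda$ term is maximized in $w$ at $w^* = \nabla f(z)/f(z)$ with optimal value $|\nabla f(z)|^2/(2f(z))$, yielding $Q_\lambda f(z) = f(z) + \tfrac{\lambda}{2}|\nabla f(z)|^2/f(z) + o(\lambda)$. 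Integrating against $\mu$ and passing $\lambda \downarrow 0$ inside the integral then gives $\frac{d}{d\lambda}\|Q_\lambda f\|_1|_{\lambda=0^+} = \tfrac{1}{2}\int |\nabla f|^2/f\, d\mu$. Combining the two identities yields exactly $\tfrac{1}{2}\int |\nabla f|^2/f \, d\mu \geq H_\mu(f)$.

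The principal technical obstacle is the rigorous interchange of differentiation and integration in $\frac{d}{d\lambda}\int Q_\lambda f\, d\mu$: one needs uniform control on the maximizer $w^*(\lambda)$ for small $\lambda$ and a $\mu$-integrable dominating function for $\lambda^{-1}(Q_\lambda f - f)$. In the positive smooth setting, the envelope analysis above localizes $w^*(\lambda)$ near $\nabla \log f(z)$ with a well-defined leading-order contribution, and monotonicity of $Q_\lambda f$ in $\lambda$ (the sup can only grow) combined with the Gaussian integrability of $\mu$ should supply the required domination; if strict regularity is lacking, one can first establish the inequality for a dense class of well-behaved $f$ (e.g.\ compactly supported smooth perturbations of a Gaussian) and extend by approximation in $H_\mu(\cdot)$ and in the Dirichlet form $\int |\nabla f|^2/f\, d\mu$.
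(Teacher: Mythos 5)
Your proposal follows essentially the same route as the paper's own (sketched) proof: linearize the integrated inequality $\|Q_\lambda f\|_1 \geq \|f\|_{1+\lambda}$ at $\lambda = 0$, using the expansion $\|f\|_{1+\lambda} = \|f\|_1 + \lambda H_\mu(f) + o(\lambda)$ together with the first-order bound $\int Q_\lambda f \, d\mu \leq \|f\|_1 + \tfrac{\lambda}{2}\int |\nabla f|^2/f \, d\mu + o(\lambda)$. The technical caveats you flag (only the upper bound on $Q_\lambda f$ is really needed, and the pointwise expansion can fail for unbounded $f$, where $Q_\lambda f$ may even be infinite) are handled in the paper exactly as you suggest: the inequality is first established for smooth positive functions constant outside a compact set and then extended to general $f$ by a limiting argument.
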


A proof is given in \cite{BL00}, where the expressions are given in terms of $f^2$ rather than $f$.  It follows as a limiting case of Theorem \ref{thm: Integrated log-Sobolev} with $\lambda \to 0$.  
\begin{proof}[Sketch of proof]For smooth positive functions constant outside of a compact set, one observes that equality holds when $\lambda = 0$. Then the Taylor series expansion, 
\[
    \| f \|_{1+\lambda} = \|f\|_1 + \lambda H_\mu(f) + o(\lambda)
\]
and a derived inequality
\[
    \| Q_\lambda f \|_1 \leq \|f\|_1 + \frac{\lambda}{2} \int \frac{| \nabla f |^2}{ f} d\mu + o(\lambda)
\]
deliver the conclusion. A limiting argument gives the result for general functions.
\end{proof}

Now let us specialize to the case that $\mu = \gamma_d$ a standard Gaussian, and $*$ denote the half-space rearrangement of a set under $\gamma_d$ as in Proposition \ref{prop: Gaussian rearrangement to one d} and we can state our main result of the section.

\begin{thm} \label{thm: infimum convolution like inequality}
    For non-negative Borel $f$ and $\lambda,s > 0$,
    \begin{align*}
        \gamma_d ( \{ Q_\lambda f > s \} ) \geq \gamma( \{Q_\lambda f^*  > s \} )
    \end{align*}
    where $f^*$ is the Gaussian half-line rearrangement of $f$.
\end{thm}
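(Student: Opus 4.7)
The plan is to rewrite the superlevel sets $\{Q_\lambda f > s\}$ and $\{Q_\lambda f^* > s\}$ as unions of Minkowski enlargements by balls, apply the Gaussian isoperimetric inequality layer by layer (this is the set-theoretic rearrangement inequality corresponding to Minkowski sum with a ball), and conclude by taking the supremum over levels.

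More concretely, fix $s>0$ and for $t>s$ write $r_t = \sqrt{2\lambda \log(t/s)}$. A direct check, entirely parallel to the characterization \eqref{eq:characterizing formula for super level sets} used in Theorem~\ref{thm: PLI rearrangement}, shows that $z \in \{Q_\lambda f > s\}$ iff there exist $t>s$ and $w \in \mathbb{R}^d$ with $|w|<r_t$ and $f(z+w)>t$; equivalently,
\begin{align*}
    \{Q_\lambda f > s\} = \bigcup_{t>s}\bigl( \{f>t\} + B_d(0,r_t) \bigr),
\end{align*}
where $B_d(0,r)$ is the open Euclidean ball. Density of the rationals and continuity of $t \mapsto r_t$ let us restrict the union to $t \in \mathbb{Q} \cap (s,\infty)$, giving a countable union of analytic (hence universally measurable) sets. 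The identical argument on $\mathbb{R}$ produces
\begin{align*}
    \{Q_\lambda f^* > s\} = \bigcup_{t>s}\bigl( \{f^* > t\} + (-r_t,r_t) \bigr).
\end{align*}

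For each fixed $t>s$, the Gaussian isoperimetric inequality of Borell and Sudakov--Tsirelson is precisely a set-theoretic rearrangement inequality for the map $A \mapsto A + B_d(0,r)$: namely
\begin{align*}
    \gamma_d\bigl( A + B_d(0,r) \bigr) \geq \Phi\bigl(\Phi^{-1}(\gamma_d(A)) + r\bigr) = \gamma\bigl( A^* + (-r,r) \bigr),
\end{align*}
where $A^* = (-\infty, \Phi^{-1}(\gamma_d(A)))$ is the half-line rearrangement of $A$. Applied with $A=\{f>t\}$ and $r=r_t$, and using $\{f^*>t\}=\{f>t\}^*$, this yields $\gamma_d\bigl(\{f>t\} + B_d(0,r_t)\bigr) \geq \gamma\bigl(\{f^*>t\} + (-r_t,r_t)\bigr)$ for every $t>s$.

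Finally we combine. Since $\{Q_\lambda f > s\}$ contains each Minkowski piece, monotonicity of $\gamma_d$ plus the previous step give
\begin{align*}
    \gamma_d(\{Q_\lambda f > s\}) \geq \sup_{t>s} \gamma_d\bigl(\{f>t\}+B_d(0,r_t)\bigr) \geq \sup_{t>s}\gamma\bigl(\{f^*>t\}+(-r_t,r_t)\bigr).
\end{align*}
Each set $\{f^*>t\}+(-r_t,r_t)$ is an open half-line $(-\infty, c_t+r_t)$ with $c_t = \Phi^{-1}(\gamma_d(\{f>t\}))$, so their union is itself the open half-line $(-\infty, \sup_{t>s}(c_t+r_t))$. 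Continuity and monotonicity of $\Phi$ then identify the supremum on the right-hand side with $\gamma(\{Q_\lambda f^* > s\})$, finishing the proof. The only real subtlety is the measurability bookkeeping for the superlevel sets, which is handled exactly as in Theorem~\ref{thm: Main Theorem} through the rational approximation above; the substantive content is the invocation of Gaussian isoperimetry as a set-theoretic rearrangement inequality for Minkowski enlargement by a ball.
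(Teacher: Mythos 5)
Your proposal is correct and follows essentially the same route as the paper: decompose $\{Q_\lambda f > s\}$ as a countable union of Minkowski sums $\{f>t\}+B_d(0,r)$ of superlevel sets with balls, apply the Borell--Sudakov--Tsirelson isoperimetric inequality to each piece, and use that the rearranged pieces are nested half-lines so the supremum of their measures is the measure of their union, which is $\{Q_\lambda f^* > s\}$. The only difference is cosmetic: you index the union by a single level $t>s$ with radius $r_t=\sqrt{2\lambda\log(t/s)}$, whereas the paper uses rational pairs $(q_1,q_2)$ with $q_1q_2>s$ and radius $\sqrt{2\lambda\ln(1/q_2)}$; the measurability bookkeeping via analytic sets is identical.
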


It will be a consequence of the proof that $Q_\lambda f$ is universally measurable.

\begin{proof}
We first express $\{ Q_\lambda f > s \}$ as the union of simpler sets.  Denoting
\[
S = S(s,q_1,q_2) = \{ q = (q_1,q_2) \in \mathbb{Q}_+^2: q_1 q_2 > s \},
\] it is straight forward to verify
\begin{align} \label{eq: equivalence of infimum convolution super level sets}
    \{ Q_\lambda f > s \} = \bigcup_{q \in S} \left( \{x \in \mathbb{R}^d : f(x) > q_1 \} + \left\{y \in \mathbb{R}^d : |y| < \sqrt{ 2 \lambda \ln \frac 1 {q_2}  } \right\} \right) .
\end{align}
Indeed, for $z$ belonging to the union,  there exists rational $q_i$, and $x,y$ satisfying $f(x) > q_1$, $|y| < \sqrt{ 2 \lambda \ln \frac 1 {q_2} }$,  and $x+y = z$.  Taking $w = -x = y-z$,
\begin{align*}
    f(w)e^{-|w|^2/2\lambda} > q_1 q_2 > s,
\end{align*}
so that
$z \in \{ Q_\lambda f > s\}$.  Conversely if there exists a $w$ such that $f(z+w) e^{-|w|^2/ 2\lambda} > s$ then by continuity there exist rational $q_i$ satisfying $f(z+w) > q_1$, $e^{-|w|^2/2 \lambda} > q_2$, and $q_1 q_2 > s$.  Taking $x = z+w$ and $y= -w$ we see that $(q_1,q_2) \in S$ and
\begin{align*}
    z \in \{ f > q_1 \} + \left\{ |y| < \sqrt{ 2 \lambda \ln \frac 1 {q_2} } \right\}.
\end{align*}
Notice that this gives $\{Q_\lambda f > s \}$ as a countable union of Minkowski sums of analytic sets.  Since analytic sets are closed under such operations, $\{Q_\lambda f > s\}$ is an analytic set as well, and the universal measurability of $Q_t f$ follows.

 Applying the Gaussian isoperimetric inequality \cite{borell1975brunn, ST78}, which in our preferred formulation states that $\gamma_d( A + B_d) \geq \gamma(A^* + B_1)$ where $B_d$ and $B_1$ are origin symmetric Euclidean balls of equal radius (in $\mathbb{R}^d$ and $\mathbb{R}$ respectively), we have
\begin{align*}
    \gamma_d(\{ Q_\lambda f > s\})
        &=
            \gamma_d \left( \bigcup_{q \in S} \{ f > q_1 \} + \left\{w \in \mathbb{R}^d : |w| < \sqrt{ 2 \lambda \ln \frac 1 {q_2}  } \right\} \right) 
                \\
        &\geq
            \sup_{q \in S} \gamma_d \left(\{ f > q_1 \} + \left\{w \in \mathbb{R}^d : |w| < \sqrt{ 2 \lambda \ln \frac 1 {q_2}  } \right\} \right)
                \\
        &\geq
            \sup_{q \in S} \gamma \left(\{ f > q_1 \}^* + \left\{w \in \mathbb{R} : |w| < \sqrt{ 2 \lambda \ln \frac 1 {q_2}  } \right\} \right).
\end{align*}
But $ \{ f > q_1 \}^* = \{ f^* > q_1 \}$ is a half-line and hence the family of 
$\{f^* > q_1 \} +  \left\{|w| < \sqrt{ 2 \lambda \ln \frac 1 {q_2}  } \right\}$ indexed by $S(\lambda,q_1,q_2)$ is a family of totally ordered sets.  Thus,
\begin{align*}
    \sup_{q \in S} \gamma \left(\{ f > q_1 \}^* + \left\{|w| < \sqrt{ 2 \lambda \ln \frac 1 {q_2}  } \right\} \right) = 
        \gamma \left( \bigcup_{q \in S} \{ f^* > q_1 \} + \left\{|w| < \sqrt{ 2 \lambda \ln \frac 1 {q_2}  } \right\} \right).
\end{align*}
    Applying \eqref{eq: equivalence of infimum convolution super level sets} we have
\begin{align*}
    \gamma \left( \bigcup_{q \in S} \{ f^* > q_1 \} + \left\{|w| < \sqrt{ 2 \lambda \ln \frac 1 {q_2}  } \right\} \right)
        =
            \gamma(\{ Q_\lambda f^* > \lambda \}),
\end{align*}
and our theorem follows.
\end{proof}

We have as an immediate consequence, a sharpening of Theorem \ref{thm: Integrated log-Sobolev}.
\begin{cor}
    For $f$ non-negative and Borel, and norms taken with respect to $\gamma$,
    \[
        \int Q_\lambda f d \gamma \geq \int Q_\lambda f^* d \gamma \geq \|f^*\|_{1+\lambda}  = \|f\|_{1+\lambda}.
    \]
\end{cor}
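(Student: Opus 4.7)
The corollary follows almost mechanically from the preceding theorem together with the integrated log-Sobolev inequality (Theorem \ref{thm: Integrated log-Sobolev}) and equimeasurability, so the plan is essentially to chain three well-identified facts. The only thing requiring a moment of care is keeping track of which Gaussian measure ($\gamma_d$ on $\mathbb{R}^d$ versus $\gamma$ on $\mathbb{R}$) is being integrated against at each step.

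For the first inequality, I would invoke the layer-cake representation. Since Theorem \ref{thm: infimum convolution like inequality} already established that $Q_\lambda f$ is universally measurable and that $\gamma_d(\{Q_\lambda f > s\}) \geq \gamma(\{Q_\lambda f^* > s\})$ for every $s > 0$, I integrate in $s$:
\[
\int Q_\lambda f \, d\gamma_d = \int_0^\infty \gamma_d(\{Q_\lambda f > s\}) \, ds \geq \int_0^\infty \gamma(\{Q_\lambda f^* > s\}) \, ds = \int Q_\lambda f^* \, d\gamma.
\]

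For the second inequality, I observe that the one-dimensional standard Gaussian $\gamma$ is strongly log-concave in the sense of \eqref{eq: strong convexity} (its potential is $V(x) = x^2/2 + \tfrac12 \log(2\pi)$, with $V'' = 1$). Since $f^*$ is a non-negative Borel function on $\mathbb{R}$, Theorem \ref{thm: Integrated log-Sobolev} applies on $(\mathbb{R}, \gamma)$ and yields
\[
\|Q_\lambda f^*\|_1 \geq \|f^*\|_{1+\lambda},
\]
where both norms are taken against $\gamma$; the left-hand side here is precisely $\int Q_\lambda f^* \, d\gamma$.

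Finally, the equality $\|f^*\|_{1+\lambda} = \|f\|_{1+\lambda}$ is the equimeasurability assertion contained in Proposition \ref{prop: characterization of rearrangement}: $\mu\{f>t\} = \alpha\{f^*>t\}$ for every $t>0$. Combined with the layer-cake formula
\[
\int f^{1+\lambda} \, d\gamma_d = (1+\lambda)\int_0^\infty t^\lambda \gamma_d\{f>t\}\,dt,
\]
applied in parallel to $f$ on $(\mathbb{R}^d,\gamma_d)$ and $f^*$ on $(\mathbb{R},\gamma)$, this gives $\int f^{1+\lambda}\,d\gamma_d = \int (f^*)^{1+\lambda}\,d\gamma$ and hence the claimed equality of norms. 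Stringing the three steps together produces the chain of inequalities in the corollary. No genuine obstacle arises; the only pitfall to avoid is accidentally mixing $\gamma_d$ and $\gamma$ notation in the middle term.
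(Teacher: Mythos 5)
Your proposal is correct and follows essentially the same route as the paper: the first inequality comes from integrating the level-set comparison of Theorem \ref{thm: infimum convolution like inequality} via the layer-cake formula, the second from applying Theorem \ref{thm: Integrated log-Sobolev} to $f^*$ on $(\mathbb{R},\gamma)$, and the final equality from equimeasurability of the rearrangement. The paper states these steps without the explicit layer-cake and measure-bookkeeping details you supply, but the argument is the same.
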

\begin{proof}
    The first inequality is a consequence of Theorem \ref{thm: infimum convolution like inequality}, while the second is from Theorem \ref{thm: Integrated log-Sobolev}.
\end{proof}

We also direct the reader to the articles \cite{martin2009isoperimetry, martin2010pointwise} of Mart\'in and M. Milman,  whose work on symmetrization, isoperimetry, and log-Sobolev inequalities the author learned of during the revision of this paper.

\section{Barthe, Brascamp, Lieb and Rearrangement} \label{sec: Barthe Rearrangement Connection}
The Brascamp-Lieb inequality is the following.
\begin{thm}[Brascamp, Lieb \text{\cite{BL76b}}] \label{thm:Brascamp-Lieb}
    For natural numbers $n \leq m$, and $\{n_i\}_{i=1}^m$ with $n_i \leq n$ and $\{c_i\}_{i=1}^m$ a sequence of positive numbers such that $\sum_{i=1}^m c_i n_i = n$ then for surjective linear maps $B_i: \mathbb{R}^{n} \to \mathbb{R}^{n_i}$, with $\cap_i \ker(B_i) = 0$ and transposes denoted $B_i'$ satisfy the following,
    \begin{align*}
        \int_{\mathbb{R}^n} \prod_{i=1}^m f^{c_i}_i(B_i x) dx \leq C^{-1/2} \prod \left( \int_{\mathbb{R}^{n_i}} f_i \right)^{c_i}
    \end{align*}
    for $f_i :\mathbb{R}^{n_i} \to [0,\infty)$ integrable, and 
    \begin{align*}
        C = \inf \left\{ \frac{ \det( \sum_{i=1} c_i B_i' A_i B_i )}{ \prod \det^{c_i}{A_i} } : A_i \mbox{ positive definite} \right\}.
    \end{align*}
\end{thm}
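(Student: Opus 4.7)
The plan is to follow the optimal-transport approach of Barthe alluded to in the introduction, which recovers both the Brascamp-Lieb inequality and the dual extending PLI from a single argument. The first observation is that the constant $C$ is calibrated so that centered Gaussians saturate the inequality: plugging $f_i(y) = e^{-\langle A_i y, y\rangle/2}$ into both sides and computing the resulting Gaussian integrals, the hypothesis $\sum_i c_i n_i = n$ cancels every factor of $2\pi$ and yields
\[
\int_{\mathbb{R}^n}\prod_i f_i^{c_i}(B_i x)\,dx \Big/ \prod_i \Big(\int_{\mathbb{R}^{n_i}} f_i\Big)^{c_i}
    = \Big[\det\Big(\sum_i c_i B_i' A_i B_i\Big)\Big/\prod_i \det(A_i)^{c_i}\Big]^{-1/2}.
\]
Therefore the Brascamp-Lieb inequality is equivalent to the assertion that centered Gaussians are extremal, and the proof reduces to an inequality pitting a general family $(f_i)$ against a Gaussian comparator.

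After truncation and mollification I would assume each $f_i$ is smooth, strictly positive, compactly supported, and normalized by $\int f_i = 1$. Introducing positive definite matrices $A_i$ as free parameters, let $T_i = \nabla\phi_i:\mathbb{R}^{n_i}\to\mathbb{R}^{n_i}$ be the Brenier map pushing the centered Gaussian density $\gamma_{A_i}(y) = (2\pi)^{-n_i/2}\det(A_i)^{1/2} e^{-\langle A_i y, y\rangle/2}$ forward onto $f_i$. The Monge-Amp\`ere equation then yields $\gamma_{A_i}(y) = f_i(T_i(y))\det(D^2\phi_i(y))$. Following the construction of Ball and Barthe, I would assemble a global transport map $\Theta:\mathbb{R}^n\to\mathbb{R}^n$ by
\[
\Theta(x) \coloneqq \Big(\sum_i c_i B_i' A_i B_i\Big)^{-1}\sum_i c_i B_i' A_i T_i(B_i x),
\]
whose Jacobian matrix is $D\Theta(x) = \big(\sum_i c_i B_i' A_i B_i\big)^{-1}\big(\sum_i c_i B_i' A_i D^2\phi_i(B_i x) B_i\big)$.

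Performing the change of variables $z = \Theta(x)$ in the target integral and substituting the Monge-Amp\`ere identities to exchange each occurrence of $f_i(B_i x)$ for a Gaussian weight times a Jacobian factor, the proof reduces to the pointwise matrix determinantal inequality
\[
\det\Big(\sum_i c_i B_i' A_i D^2\phi_i(B_i x) B_i\Big) \geq C\cdot \prod_i \det\big(A_i D^2\phi_i(B_i x)\big)^{c_i},
\]
which is the definition of $C$ evaluated with $A_i D^2\phi_i(B_i x)$ in place of $A_i$. This is the main obstacle: the matrices $A_i D^2\phi_i(B_i x)$ are in general not symmetric, so one must recast the inequality in terms of the similar symmetric positive definite matrices $A_i^{1/2} D^2\phi_i(B_i x) A_i^{1/2}$ and verify that this conjugation preserves the relevant determinantal ratio---the content of the Ball-Barthe matrix lemma. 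Once it is in hand, the Gaussian weights cancel against the factors $\det(A_i)^{c_i}$ after integration, yielding $\int\prod_i f_i^{c_i}(B_i x)\,dx \leq C^{-1/2}$, and a monotone convergence argument removes the smoothness, positivity, and compact-support reductions.
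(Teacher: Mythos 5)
The paper does not prove this theorem at all: it is quoted from Brascamp--Lieb \cite{BL76b} as background for Section \ref{sec: Barthe Rearrangement Connection}, so your proposal has to be judged against the standard transport proof (Barthe's) that you are trying to reproduce. Your opening calibration is fine: plugging centered Gaussians into both sides does show the ratio $\left[\det\left(\sum_i c_i B_i' A_i B_i\right)/\prod_i \det(A_i)^{c_i}\right]^{-1/2}$ is attained in the limit, i.e.\ the constant is sharp. But the transport argument itself has three linked gaps. First, the direction of transport is backwards for this (direct) inequality: you take $T_i$ pushing $\gamma_{A_i}$ onto $f_i$, so the Monge--Amp\`ere identity only rewrites $f_i(T_i(y))$, whereas your plan requires replacing $f_i(B_i x)$ at a generic point; for the upper bound one needs the Brenier maps $S_i$ pushing $f_i$ onto the Gaussians, so that $f_i(B_i x)=\gamma_{A_i}(S_i(B_i x))\det DS_i(B_i x)$ can be substituted pointwise. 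The Gaussian-to-$f_i$ direction combined with the change of variables $z=\Theta(x)$ is the scheme for Barthe's \emph{reverse} inequality, not for Brascamp--Lieb. Second, your map $\Theta(x)=\left(\sum_i c_i B_i'A_iB_i\right)^{-1}\sum_i c_i B_i'A_iT_i(B_ix)$ is not the gradient of a convex function (the $A_i$ weights and the inverse prefactor destroy that structure), so the essential injectivity and nonnegativity of the Jacobian that legitimize the change-of-variables inequality are never justified.

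Third, and most seriously, the pointwise determinant inequality you reduce to, $\det\left(\sum_i c_i B_i' A_i D^2\phi_i(B_ix) B_i\right)\geq C\prod_i\det\left(A_iD^2\phi_i(B_ix)\right)^{c_i}$, is \emph{not} "the definition of $C$ evaluated with $A_iD^2\phi_i$ in place of $A_i$": the infimum defining $C$ runs over symmetric positive definite matrices, and replacing $A_iD^2\phi_i$ by the symmetric matrices $A_i^{1/2}D^2\phi_iA_i^{1/2}$ changes the sum termwise, so it does not "preserve the determinantal ratio". In fact the displayed inequality is false as a matrix statement: take $n=2$, $m=2$, $B_1=B_2=I_2$, $c_1=c_2=\tfrac12$ (so $C=1$ by Minkowski's determinant inequality), $A_1=\diag(1,\epsilon)$, $H_1=\begin{pmatrix}1&1\\1&2\end{pmatrix}$, and $A_2=H_1$, $H_2=A_1$; then the left side is $\det\bigl(\tfrac12(A_1H_1+H_1A_1)\bigr)=2\epsilon-(1+\epsilon)^2/4<0$ for small $\epsilon$, while the right side equals $\epsilon>0$. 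The Ball--Barthe lemma does not repair this; it is a different statement used in the normalized situation. The standard way to make your outline work (Barthe's actual route) is to first reduce to the geometric case by a linear change of variables that absorbs $A_i$ and $\sum_i c_iB_i'A_iB_i$ into new maps $\tilde B_i$ with $\sum_i c_i\tilde B_i'\tilde B_i=I_n$, then transport each $f_i$ onto the standard Gaussian; the composed map $\Theta(x)=\sum_i c_i\tilde B_i'S_i(\tilde B_ix)$ is then the gradient of a convex function, the determinant inequality involves only symmetric Hessians, and one also needs the quadratic-form inequality $\sum_i c_i|S_i(\tilde B_ix)|^2\geq|\Theta(x)|^2$, which your sketch omits entirely.
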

The theorem enjoys a qualitative analog in the case that $n_i = d$, so that $n = md$ and $x \in \mathbb{R}^n$ can be expressed as $x = (x_1, \dots, x_m)$ for $x_j \in \mathbb{R}^d$ and $B_i$ are of the form 
\begin{align} \label{eq: BLL defintion of linear maps}
    B_i x = \sum_{j = 1}^m B_{ij} x_j
\end{align}
then the rearrangement theorem due to Brascamp-Lieb-Luttinger is what follows.
\begin{thm}[Brascamp, Lieb, Luttinger \cite{BLL74}] \label{thm:BLL} 
For $B_i$ satisfying \eqref{eq: BLL defintion of linear maps},
\begin{align*}
    \int_{\mathbb{R}^n} \prod_{i=1}^m f_i( B_i x) dx \leq \int_{\mathbb{R}^n} \prod_{i=1}^m f_i^*(B_i x) dx,
\end{align*}
where $*$ represents the spherically symmetric decreasing rearrangement.
\end{thm}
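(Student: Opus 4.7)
The plan is to follow the classical three-step strategy: layer-cake decomposition, Steiner symmetrization, and reduction to a one-dimensional multilinear rearrangement inequality. First, by the layer-cake identity $f_i(y) = \int_0^\infty \mathbbm{1}_{\{f_i > t_i\}}(y)\, dt_i$ together with Fubini-Tonelli, the functional inequality becomes
\[
    \int_{[0,\infty)^m} \left( \int_{\mathbb{R}^n} \prod_{i=1}^m \mathbbm{1}_{\{f_i > t_i\}}(B_i x)\, dx \right) dt_1 \cdots dt_m,
\]
and analogously for the right-hand side. Since Proposition \ref{prop: characterization of rearrangement} gives $\{f_i^* > t\} = \{f_i > t\}^*$, it suffices to prove the set-theoretic version: for measurable $A_i \subseteq \mathbb{R}^d$,
\[
    \int_{\mathbb{R}^n} \prod_{i=1}^m \mathbbm{1}_{A_i}(B_i x)\, dx \leq \int_{\mathbb{R}^n} \prod_{i=1}^m \mathbbm{1}_{A_i^*}(B_i x)\, dx.
\]

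Second, I would show that each such set-theoretic integral is non-decreasing under a simultaneous Steiner symmetrization of all the $A_i$ in a common direction $e \in S^{d-1}$. Using the structural hypothesis that $B_i$ acts on $x = (x_1,\dots,x_m)$ by $B_i x = \sum_j B_{ij} x_j$ with scalar coefficients $B_{ij}$, the map $B_i$ commutes with the coordinate splitting along and perpendicular to $e$. Consequently, after writing each $x_j = (x_j^e, x_j^\perp)$ and fixing the perpendicular slab, the integrand factorizes and the slice integral becomes an instance of the same inequality on $\mathbb{R}^m$ applied to the one-dimensional cross-sections of the sets $A_i$ in direction $e$.

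Third, and this is where the main obstacle lies, I must prove the one-dimensional multilinear rearrangement inequality: for measurable sets $A_i \subseteq \mathbb{R}$ and linear forms $L_i : \mathbb{R}^m \to \mathbb{R}$ given by $L_i x = \sum_j B_{ij} x_j$,
\[
    \int_{\mathbb{R}^m} \prod_{i=1}^m \mathbbm{1}_{A_i}(L_i x)\, dx \leq \int_{\mathbb{R}^m} \prod_{i=1}^m \mathbbm{1}_{A_i^*}(L_i x)\, dx,
\]
where $A_i^*$ is the centered interval of length $|A_i|$. The base cases $m \leq 3$ follow from the Riesz rearrangement inequality, and the general case is typically handled by Brascamp-Lieb-Luttinger's original continuous rearrangement argument, where one introduces a one-parameter family of sets interpolating $A_i$ with $A_i^*$ (via sliding or compression of the characteristic function) and shows that the integral is monotone along the deformation. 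The delicate point is that the multilinear structure must be preserved under the deformation; a Gaussian approximation followed by explicit computation is an alternative I would try if the compression argument becomes unwieldy.

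Finally, to pass from Steiner symmetrizations to the full spherically symmetric decreasing rearrangement, I would invoke the classical fact that an appropriately chosen sequence of Steiner symmetrizations (e.g., in a countable dense set of directions, iterated sufficiently) converges in $L^1$ (equivalently, in measure for the symmetric difference of sets) to the spherical rearrangement. Continuity of the integrand in the $A_i$ (bounded by $\prod_i |A_i|$ up to the operator norm of the $B_i$) then lets me pass to the limit and conclude. The main obstacle throughout is step three: the multilinear one-dimensional case truly requires new ideas beyond Riesz-Sobolev, and this is the heart of the Brascamp-Lieb-Luttinger contribution.
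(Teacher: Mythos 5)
First, a point of comparison: the paper does not prove Theorem \ref{thm:BLL} at all --- it is quoted from \cite{BLL74} as background for Section \ref{sec: Barthe Rearrangement Connection}, and indeed the paper's own machinery (Theorem \ref{thm: Main Theorem}) cannot produce it, since that machinery yields \emph{lower} bounds for sup-convolution type operations, whereas Brascamp--Lieb--Luttinger is an \emph{upper} bound for products; this is exactly the duality the paper leaves open in Question \ref{ques: reverse BLL}. So your proof is necessarily external to the paper, and what you have written is a faithful outline of the classical BLL strategy: layer-cake plus Fubini--Tonelli to reduce to indicators (using $\{f^*>t\}=\{f>t\}^*$), simultaneous Steiner symmetrization in a direction $e$ (legitimate here because the coefficients $B_{ij}$ in \eqref{eq: BLL defintion of linear maps} are scalars, so $B_i$ respects the splitting $x_j=(x_j^e,x_j^\perp)$ and the slice integrals are one-dimensional instances of the same inequality applied to sections), and then convergence of iterated Steiner symmetrizations to the ball.

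The genuine gap is that the central ingredient --- the one-dimensional multilinear inequality for arbitrary many sets and arbitrary linear forms --- is not proven but only attributed ("typically handled by Brascamp--Lieb--Luttinger's original continuous rearrangement argument", with a vague Gaussian fallback). That lemma \emph{is} the theorem: everything else in your outline is routine reduction, so as it stands the argument is circular, and you would need to actually carry out the sliding/merging induction on finite unions of intervals (monotonicity of the integral as the interval centers are scaled toward the origin, stopping when two intervals merge, and induction on the total number of intervals), then approximate general measurable sets. A second, smaller but real, defect is the final limiting step: the claimed continuity bound "by $\prod_i|A_i|$ up to the operator norm of the $B_i$" does not control differences, because deleting one indicator from the product can leave a non-integrable function on $\mathbb{R}^n$ (already for $m=2$, $\int_{\mathbb{R}^{2d}}\mathbbm{1}_{A_1}(B_1x)\,dx=\infty$), so convergence of the $A_i^{(k)}$ in symmetric difference does not by itself pass the inequality to the limit. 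The standard repairs are to work first with compact sets or finite unions of rectangles and use inner regularity and monotone convergence, or to use the upper-semicontinuity/compactness argument of \cite{BLL74} (see also Lieb--Loss), where one exploits that all iterates have the same measure as the target ball. With those two items supplied your plan becomes the classical proof; without them it is an accurate roadmap rather than a proof.
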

Notice that when Theorem \ref{thm:BLL} applies, it gives an intermediary inequality to Theorem \ref{thm:Brascamp-Lieb}.  Indeed since $(f^{c_i})^* = (f^*)^{c_i}$, applying Theorem \ref{thm:BLL} and then \ref{thm:Brascamp-Lieb} gives
\begin{align*}
    \int_{\mathbb{R}^n} \prod_{i=1}^m f^{c_i}(B_i x) dx 
        &\leq 
            \int_{\mathbb{R}^n} \prod_{i=1}^m (f^*)^{c_i}(B_i x) dx 
                \\
        &\leq
        C^{-1/2} \prod_{i=1}^m \left( \int_{\mathbb{R}^{n_i}} f \right)^{c_i}.
\end{align*}

Barthe gave the following reversal of Brascamp-Lieb, that serves as a dual inequality.
\begin{thm}[Barthe \cite{Bar98b}]
    For $n$, $m$, $\{n_i\}_{i=1}^m$, $\{c_i\}_{i=1}^m$, $B_i$, and $C$ as in Theorem \ref{thm:Brascamp-Lieb} then the inequality
    \begin{align*}
        C^{1/2} \prod_{i=1}^m \left( \int_{\mathbb{R}^{n_i}} f_i \right)^{c_i} \leq  \int_{\mathbb{R}^n} \sup \left\{  \prod_{i=1}^m f^{c_i}_i (y_i) : \sum_i c_i B_i' y_i = x  \right\}dx,
    \end{align*}
    holds for $f_i :\mathbb{R}^{n_i} \to [0,\infty)$ integrable.
\end{thm}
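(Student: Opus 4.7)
The plan is to follow Barthe's original optimal transport argument. By homogeneity, both sides scale identically in $f_i$, so one may assume $\int_{\mathbb{R}^{n_i}} f_i = 1$. The key preparatory observation is that the natural Gaussian from which to transport is not the standard one, but one whose covariance matches the extremizers of the matrix functional defining $C$. Concretely, choose positive definite $A_i$ attaining (or approaching) the infimum $C = \inf \det(\sum c_i B_i' A_i B_i) / \prod \det^{c_i} A_i$, and let $\mu_i$ denote the centered Gaussian density on $\mathbb{R}^{n_i}$ with covariance $A_i^{-1}$. Apply Brenier's theorem to obtain convex potentials $\phi_i$ whose gradients $T_i = \nabla \phi_i$ push $\mu_i$ forward to the probability measure $f_i \, dy$, so that the Monge--Amp\`ere identity $\mu_i(u) = f_i(T_i(u)) \det \nabla T_i(u)$ holds a.e.

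Now define $\Theta(u) = \sum_i c_i B_i' T_i(B_i u)$, which is the gradient of the convex function $\Phi(u) = \sum_i c_i \phi_i(B_i u)$ and therefore is a.e.\ injective with positive semi-definite Jacobian $\nabla \Theta(u) = \sum_i c_i B_i' \nabla T_i(B_i u) B_i$. For each $u$, the choice $y_i = T_i(B_i u)$ satisfies $\sum_i c_i B_i' y_i = \Theta(u)$ and so is admissible in the supremum at $x = \Theta(u)$, bounding the integrand from below by $\prod_i f_i^{c_i}(T_i(B_i u))$. A change of variables $x = \Theta(u)$ then yields
\[
\int \sup\Bigl\{\prod_i f_i^{c_i}(y_i) : \sum_i c_i B_i' y_i = x\Bigr\} dx \ \geq\ \int \prod_i f_i^{c_i}(T_i(B_i u))\, \det \nabla \Theta(u)\, du.
\]

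The matrix inequality defining $C$, applied pointwise to the PSD matrices $\nabla T_i(B_i u)$, gives $\det \nabla \Theta(u) \geq C \prod_i \det^{c_i} \nabla T_i(B_i u)$; combining this with Monge--Amp\`ere collapses the integrand exactly to $C \prod_i \mu_i^{c_i}(B_i u)$. A direct Gaussian calculation (using $\sum c_i n_i = n$) evaluates
\[
\int \prod_i \mu_i^{c_i}(B_i u)\, du \ =\ \frac{\prod_i (\det A_i)^{c_i/2}}{\sqrt{\det (\sum_i c_i B_i' A_i B_i)}},
\]
and the defining optimality of $A_i$, namely $\det(\sum_i c_i B_i' A_i B_i) = C \prod_i (\det A_i)^{c_i}$, converts this ratio into exactly $C^{-1/2}$, producing the final lower bound $C \cdot C^{-1/2} = C^{1/2}$ as required. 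The main obstacle I anticipate is justifying the change of variables rigorously: Brenier maps are a priori only BV, so one must invoke Alexandrov's theorem on twice-differentiability of convex functions, or alternatively approximate the $f_i$ by smooth strictly positive densities with compact support and pass to the limit. A secondary technicality is that the infimum defining $C$ need not be attained, which is handled by working along a minimizing sequence $\{A_i^{(k)}\}$ and exploiting monotonicity of the right-hand side in $k$.
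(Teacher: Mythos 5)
This theorem appears in the paper only as quoted background from Barthe \cite{Bar98b}; the paper gives no proof of it, so there is no internal argument to compare against. Your proposal is, in outline, exactly Barthe's original mass-transportation proof: normalize $\int f_i = 1$, push (near-)extremal Gaussians $\mu_i$ with covariance $A_i^{-1}$ onto $f_i\,dy$ by Brenier maps $T_i = \nabla\phi_i$, note that $y_i = T_i(B_i u)$ is admissible in the supremum at $x = \Theta(u) = \sum_i c_i B_i' T_i(B_i u)$, change variables along $\Theta = \nabla\Phi$, and combine the Monge--Amp\`ere identity with the pointwise determinant inequality $\det\bigl(\sum_i c_i B_i' H_i B_i\bigr) \geq C \prod_i \det^{c_i} H_i$ (which is just the definition of $C$) and the Gaussian integral using $\sum_i c_i n_i = n$; the algebra producing $C\cdot C^{-1/2} = C^{1/2}$ is correct. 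Two small repairs: first, $\Theta$ need not be a.e.\ injective (gradients of convex functions can be constant on sets of positive measure), but this is harmless because the Alexandrov Jacobian determinant vanishes there, and the change-of-variables step only requires the one-sided inequality $\int h(x)\,dx \geq \int h(\nabla\Phi(u))\det D^2_{A}\Phi(u)\,du$, which is the standard lemma of McCann valid for every convex $\Phi$; the Monge--Amp\`ere identity in the Alexandrov a.e.\ sense also needs both measures absolutely continuous, which holds here once the trivial case $\int f_i = 0$ is discarded. Second, no monotonicity along the minimizing sequence $A_i^{(k)}$ is needed: for each fixed $k$ the argument yields the lower bound $C \prod_i (\det A_i^{(k)})^{c_i/2} \det\bigl(\sum_i c_i B_i' A_i^{(k)} B_i\bigr)^{-1/2}$, and letting $k \to \infty$ gives $C^{1/2}$ directly. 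With those adjustments the proposal is a faithful and essentially complete reconstruction of the cited proof.
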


Taking $m=2$, $c_1 = (1-t), c_2 = t$ and $n_i = n$ and $B_i$ to be the identity map, yields $C=1$ and we recover the Prekopa-Liendler inequality.  We ask if further extensions of our work here exist.
\begin{ques} \label{ques: reverse BLL}
    Suppose that $B_i$ are of the form \eqref{eq: BLL defintion of linear maps}, and $f_i: \mathbb{R}^d \to [0,\infty)$, when is it true that
    \begin{align} \label{eq: reverse BLL}
        \int_{\mathbb{R}^n} \sup \left\{  \prod_{i=1}^m f_i(y_i) : \sum_i B_i' y_i = x  \right\}dx \geq \int_{\mathbb{R}^n} \sup \left\{  \prod_{i=1}^m f_i^*(y_i) : \sum_i B_i' y_i = x  \right\}dx
    \end{align}
    holds?
\end{ques}
The results presented here verify the inequality for general Borel $f_i$ in the case that $B_i$ are scalar multiples of the identity.   Note that in the case that $f_i  = \mathbbm{1}_{A_i}$, asks if the following generalization of BMI holds
\begin{equation} \label{eq: Feder and Zamir}
    \left|\sum_i B_i' A_i \right|_n \geq \left|\sum_i B_i' A_i^* \right|_n,
\end{equation}
where 
\begin{equation*}
    \sum_i B_i' A_i = \left\{ z = \sum_i B_i' x_i : x_i \in A_i \right\}.
\end{equation*}
In the case that $B_i' : \mathbb{R} \to \mathbb{R}^d$, inequality \eqref{eq: Feder and Zamir} was proven by Zamir and Feder \cite{ZF93}. 

\section{Acknowledgements}
This work was supported by NSF grants CMMI 1462862 and ECCS 1809194.
A portion of this work relevant to information theory was announced at 56th Annual Allerton Conference on Communication, Control, and Computing \cite{melbourne2018allerton}.
\bibliographystyle{plain}
\bibliography{bibibi}

\end{document}